\documentclass[11pt,reqno]{amsart}
\usepackage{a4wide}
\usepackage{amssymb}
\usepackage{mathrsfs}
\usepackage{enumerate}
\usepackage{esint}
\usepackage{titletoc}
\usepackage[colorlinks=true, urlcolor=red, linkcolor=red, citecolor=blue]{hyperref}
\usepackage[nameinlink]{cleveref}
\usepackage{soul}
\usepackage{xcolor}
\setstcolor{blue} 

\theoremstyle{plain}
\newtheorem{theorem}{Theorem}[section]
\newtheorem{lemma}[theorem]{Lemma}

\theoremstyle{definition}
\newtheorem{definition}[theorem]{Definition}

\numberwithin{equation}{section}

\DeclareMathOperator{\supp}{supp}
\DeclareMathOperator*{\osc}{osc}
\DeclareMathOperator*{\essosc}{ess\,osc}

\DeclareMathOperator{\dist}{dist}
\newcommand{\R}{{\mathbb{R}}}

\newcommand{\diff}{\mathop{}\!\mathrm{d}}
\makeatletter
\@namedef{subjclassname@2020}{\textup{2020} Mathematics Subject Classification}
\makeatother

\newcommand{\dd}{\,\mathrm d}

\newcommand{\Tail}{\operatorname{Tail}}
\newcommand{\PV}{\operatorname{P.V.}}
\newcommand{\Wpot}{\mathbf W}

\title[Gradient estimates for the fractional $p$-Laplacian]{Gradient estimates for the fractional $p$-Laplacian in the superquadratic regime}
\author{Ying Li, Chao Zhang$^*$}
\address{Ying Li\newline
	School of Mathematics, Harbin Institute of Technology, Harbin 150001, China\newline
	\texttt{lymath@hit.edu.cn}}
\address{Chao Zhang\newline
	School of Mathematics and Institute for Advanced Study in Mathematics, Harbin Institute of Technology, Harbin 150001, China
	\newline
	\texttt{czhangmath@hit.edu.cn}}
\thanks{$^*$ Corresponding author.}
\thanks{{\bf Keywords}: fractional $p$-Laplacian, measure data, gradient estimate,
	Wolff potential, affine excess, harmonic replacement.}
\thanks{{\bf MSC 2020}: Primary 35R09, 35J60. Secondary 35B65, 35D30, 31C45}

\begin{document}

\begin{abstract}
	We establish gradient potential estimates for solutions obtained
	as limits of approximations (SOLA) to the fractional $p$-Laplace
	equation with finite signed Radon measure data.
	Under the assumptions $n\ge2$, $p>2$, $0<s<1$, and $sp>p-1$,
	every SOLA belongs to $W^{1,p-1}_{\mathrm{loc}}$,
	and its weak gradient satisfies a Wolff potential estimate
	at every Lebesgue point of the weak gradient.
	Under the additional condition $sp>n$, the solution has a
	continuous representative that is Fr\'echet differentiable
	at every point where the potential is finite.
	These results give a partial answer to questions raised by
	Diening and Nowak [Ann. PDE, \textbf{11}(2025)] and by
	Diening, Kim, Lee and Nowak~[J. Eur. Math. Soc. (JEMS), 2025]
	concerning gradient potential estimates for the fractional
	$p$-Laplacian.  The proof combines homogeneous affine decay with constants
	independent of the affine slope and comparison estimates
	to obtain an affine excess recurrence. Iteration then yields
	the Wolff potential bound.
\end{abstract}
	\maketitle
	
	\section{Introduction}
	Let $\Omega\subset\R^n$, $n\ge2$, be open and bounded. We consider the
Dirichlet problem
	\begin{equation}\label{eq:main}
		\begin{cases}
			(-\Delta_p)^s u=\mu & \text{in }\Omega,\\
			u=0 & \text{in }\R^n\setminus\Omega,
		\end{cases}
	\end{equation}
	where $\mu\in\mathcal M(\Omega)$ is a finite signed Radon measure,
	and
	\[
	(-\Delta_p)^s u(x)
	:=2\,\PV\int_{\R^n}
	\frac{|u(x)-u(y)|^{p-2}(u(x)-u(y))}{|x-y|^{n+sp}}\,\diff y.
	\]
	For general measure data, an energy weak solution need not exist, since the measure does not in general define an element of \((W^{s,p}_0(\Omega))'\). We therefore work with SOLA in the sense of Kuusi, Mingione and Sire \cite{KuusiMingioneSire2015}. Their existence theory covers the fractional \(p\)-Laplace equation considered in \eqref{eq:main}.  We recall the specialized definition in	Definition~\ref{def:sola}.  We claim here that all estimates below are interior. The zero exterior condition simplifies the approximation procedure and the passage to the limit in the tails.
	
	Throughout the paper, we assume
	\begin{equation}\label{eq:gradient-parameter-range}
		n\ge2,\qquad p>2,\qquad 0<s<1,\qquad sp>p-1.
	\end{equation}
	We define
	\[
	m:=p-1,\qquad \beta:=sp-p+1\in(0,1),\qquad
	\gamma:=\frac\beta p=s-\frac{p-1}{p}.
	\]
	The condition $sp>p-1$ guarantees that nonconstant affine functions
	have finite $(p-1,sp)$-tail and that the linear equation obtained after
	affine subtraction has order
	\[
	sp-p+2=1+\beta>1.
	\] 
	The first main result  Theorem~\ref{thm:mean-gradient} holds throughout the range
    \eqref{eq:gradient-parameter-range}. The additional assumption $sp>n$
    is needed only in the second theorem~\ref{thm:wolff}.

	For $x_0\in\Omega$ and $r>0$,  we define
	\[
	D_\mu(x_0,r)
	:=\left(\frac{|\mu|(B_r(x_0))}{r^{n-\beta}}\right)^{1/(p-1)},
	\]
	and 
	\[
	\Wpot_{\gamma,p}^{|\mu|}(x_0,R)
	:=\int_0^R
	\left(\frac{|\mu|(B_\rho(x_0))}{\rho^{n-\gamma p}}\right)^{1/(p-1)}
	\frac{\diff\rho}{\rho}
	=\int_0^R D_\mu(x_0,\rho)\frac{\diff\rho}{\rho}.
	\]
	This potential has the scaling of a first derivative. Indeed, if
	$\widetilde u(z)=[u(x_0+rz)-c]/r$, then
	\[
	(-\Delta_p)^s\widetilde u=\widetilde\mu,\qquad
	\widetilde\mu(E)=r^{\beta-n}\mu(x_0+rE),\qquad
	D_{\widetilde\mu}(0,\rho)=D_\mu(x_0,r\rho).
	\]
	
	\subsection{Brief background and main difficulties}
	
Nonlinear potential theory provides a direct link
between the data of a PDE and the pointwise behavior of  its solutions. It is a central tool in regularity theory, measure data problems, and the study of fine properties of solutions.  We refer to
    \cite{HeinonenKilpelainenMartio2006,KimLeeLee2023,KuusiMingione2014}.
	
	For local quasilinear equations, pointwise potential estimates go back
    to the work of Kilpel\"{a}inen and Mal\'{y}~\cite{KilpelainenMaly1994}.
    See also \cite{KilpelainenMaly1992}. They proved Wolff potential estimates for solutions of
	equations of $p$-Laplace type with nonnegative measure data. A different
	approach, which also applies to subelliptic operators, was developed by
	Trudinger and Wang \cite{TrudingerWang2002}, while signed measure data were treated
	in \cite{TrudingerWang2009}. Related results on solutions with measure data and on
	the relation between different notions of solution can be found in
	\cite{KilpelainenKuusiTuholaKujanpaa2011,KorteKuusi2010}. For equations with Orlicz growth, we refer to
	\cite{ChlebickaGiannettiZatorskaGoldstein2024,LeeLee2021}.  At the gradient level, Mingione \cite{Mingione2011} established pointwise Riesz potential estimates for uniformly elliptic quasilinear equations with quadratic growth. For possibly degenerate   equation with $p$-growth, $p\ge 2$,  Duzaar and Mingione
\cite{DuzaarMingione2011} obtained pointwise gradient estimates in terms of Wolff potentials.  Related estimates in terms of
	Riesz potentials  were obtained in \cite{DuzaarMingione2010,KuusiMingione2013}. The singular case was further developed in \cite{DongZhu2022,DongZhu2024,NguyenPhuc2020,NguyenPhuc2023,XuZhao2026}. Extensions to  systems, obstacle problems, and equations with Orlicz growth can be found in \cite{Baroni2015,ByunSongYoun2023,ChlebickaKimWeidner2026,Scheven2012,XiongZhangMa2026}.
	
	In the nonlocal setting, Kuusi, Mingione and Sire
    \cite{KuusiMingioneSire2015} initiated the potential theory for equations
    with measure data. They introduced solutions obtained as limits of
    approximations (SOLA) for nonlinear nonlocal equations and proved
    pointwise Wolff potential estimates for these solutions.    Kim, Lee and Lee \cite{KimLeeLee2023} subsequently established the Wiener criterion for  nonlocal equations with standard $p$-growth, and later extended these results to equations with Orlicz growth~\cite{KimLeeLee2025}.  More recently, Nguyen, Ok and Song  \cite{NguyenOkSong2026} extended the existence and Wolff potential estimates for
	SOLA to the strongly singular range $ 1<p\le 2-\frac{s}{n}$.

	The results above concern pointwise estimates for the solution itself.
	Potential estimates at the gradient level are less developed in the
	nonlocal setting. For a class of linear nonlocal equations with measure
	data, Kuusi, Nowak and Sire \cite{KuusiNowakSire2024} established
	gradient regularity and pointwise estimates in terms of Riesz
	potentials. In the nonlinear setting, Diening, Kim, Lee and Nowak
	\cite{DieningKimLeeNowak2025c,DieningKimLeeNowak2025a} developed a
	potential theory for gradients of solutions to a class of nonlinear
	nonlocal equations with measure data. Their structural assumptions
	require a globally Lipschitz and strongly monotone nonlinearity, see
	\cite[Assumption~1.1]{DieningKimLeeNowak2025c}. These assumptions
	exclude the power nonlinearity
	\[
	J_p(t)=|t|^{p-2}t,\qquad p\ne2.
	\]
	Thus their results do not cover the degenerate fractional
	\(p\)-Laplacian considered here.
	
	For the fractional \(p\)-Laplacian, local boundedness, Harnack
	inequalities, and H\"older continuity were established in
	\cite{DiCastroKuusiPalatucci2014,DiCastroKuusiPalatucci2016,
		IannizzottoMosconiSquassina2016}. Higher Sobolev and H\"older
	regularity, as well as higher differentiability, were subsequently
	obtained in
	\cite{BogeleinDuzaarLiaoMolicaBisciServadei2025,
		BrascoLindgren2017,BrascoLindgrenSchikorra2018,
		DieningKimLeeNowak2025b,Schikorra2016}.
	Diening and Nowak \cite{DieningNowak2025} developed
	Calder\'on--Zygmund estimates for fractional \(p\)-Laplace type
	equations with VMO coefficients, yielding higher fractional
	differentiability and integrability of weak solutions.
	B\"ogelein, Duzaar, Liao and Moring
	\cite{BogeleinDuzaarLiaoMoring2025} established
	Calder\'on--Zygmund type estimates at the gradient level for the
	fractional \(p\)-Poisson equation. In particular, for
	\(s>(p-1)/p\), they proved
	\[
	f\in L_{\mathrm{loc}}^{qp/(p-1)}
	\quad\Longrightarrow\quad
	\nabla u\in L_{\mathrm{loc}}^{qp},
	\qquad q>1.
	\]
	Their subsequent work \cite{BogeleinDuzaarLiaoMoring2026} gives sharp
	gradient integrability results. For bounded right hand sides, local
	Lipschitz regularity was proved in \cite{BiswasTopp2025}.
	Giovagnoli, Jesus and Silvestre
	\cite{GiovagnoliJesusSilvestre2025} proved interior
	\(C^{1,\alpha}\) regularity for fractional \(p\)-harmonic functions in
	the range
	\[
	2\le p<\frac{2}{1-s}.
	\]
	This condition is equivalent to \(sp>p-2\). Under
	\eqref{eq:gradient-parameter-range}, we have
	\(sp>p-1>p-2\), and hence their homogeneous regularity result applies
	in our setting.
	
	Despite these developments, pointwise gradient potential estimates
	for the fractional \(p\)-Laplacian with measure data have remained
	open. This question was explicitly raised by Diening and Nowak
	\cite[Section~1.5]{DieningNowak2025}. Diening, Kim, Lee and Nowak
	\cite[Section~1.5]{DieningKimLeeNowak2025c} also listed the extension
	of their gradient potential estimates to nonlinearities with
	\(p\)-growth among their open questions, explicitly including the
	fractional \(p\)-Laplacian for \(p\ne2\).
	
	The present paper addresses this question for SOLA with finite signed
	measure data in the superquadratic range \(p>2\) and \(sp>p-1\).
	Theorem~\ref{thm:mean-gradient} establishes local
	\(W^{1,p-1}\) regularity and a pointwise Wolff potential estimate at
	every Lebesgue point of the weak gradient. An affine expansion in
	\(L^{p-1}\)-average at every point of finite potential is proved in
	Lemma~\ref{lem:mean-affine-expansion}. Under the additional condition
	\(sp>n\), Theorem~\ref{thm:wolff} gives Fr\'echet differentiability at
	every such point.

	The main difficulty is to prove affine decay for fractional
    $p$-harmonic functions with constants independent of the minimizing
    affine slope. The affine approximation changes with the scale, and
    its slope is not known a priori. The decay estimate must therefore
    remain uniform with respect to this slope. After affine subtraction, the remainder
	satisfies a linear nonlocal equation whose kernel may vanish in some
	directions. Thus a pointwise lower bound on the kernel is unavailable.
	
	\subsection{Main results}
	For a measurable function $w$, we define
	\[
	\Tail_{m,sp}(w;x_0,R)
	:=\left(R^{sp}\int_{\R^n\setminus B_R(x_0)}
	\frac{|w(y)|^m}{|y-x_0|^{n+sp}}\,\diff y\right)^{1/m},
	\qquad m=p-1,
	\]
and introduce 
	\begin{equation*}
		\mathcal A_m(u;x_0,R)
		:=\frac1R\left(\fint_{B_R(x_0)}
		|u-(u)_{B_R(x_0)}|^m\,\diff x\right)^{1/m}
		+\frac1R\Tail_{m,sp}
		\bigl(u-(u)_{B_R(x_0)};x_0,R\bigr).
	\end{equation*}
	Here $(u)_B=\fint_Bu\,\diff x$. The exponent $m=p-1$ is also used in the comparison estimate and the affine excess.

The first theorem concerns the integrability and pointwise bounds of the weak gradient.
\begin{theorem}[Weak gradient potential estimate for SOLA]
    \label{thm:mean-gradient}
    Assume \eqref{eq:gradient-parameter-range}. Let
    $\mu\in\mathcal M(\Omega)$ and let $u$ be a SOLA of
    \eqref{eq:main} in the sense of Definition~\ref{def:sola}. Then
    \begin{equation}\label{eq:weak-gradient-membership}
        u\in W^{1,p-1}_{\mathrm{loc}}(\Omega).
    \end{equation}
    For every Lebesgue point $x_0$ of $\nabla u$ and every $R>0$ such
    that $B_{2R}(x_0)\Subset\Omega$,
    \begin{equation}\label{eq:mean-gradient-bound}
        |\nabla u(x_0)|
        \le C\left[\mathcal A_m(u;x_0,2R)
        +\Wpot_{\gamma,p}^{|\mu|}(x_0,2R)\right],
    \end{equation}
    where $C=C(n,p,s)$. In particular, the estimate holds for almost
    every $x_0\in\Omega$.
\end{theorem}
Here $\nabla u(x_0)$ denotes the Lebesgue value of $\nabla u$ at $x_0$, that is,
\[
\lim_{r\to0}\fint_{B_r(x_0)}|\nabla u(x)-\nabla u(x_0)|\,\diff x=0.
\]
We  emphasise that \eqref{eq:weak-gradient-membership} is part of the
conclusion and is not assumed in the definition of SOLA. We prove it by
estimating the gradients of mollifications. The argument uses
Lemma~\ref{lem:mean-affine-expansion}, which gives an affine approximation
of $u$ in $L^{p-1}$ at every point where the Wolff potential is finite.

\begin{theorem}[Classical differentiability in the Morrey range]
    \label{thm:wolff}
    Let $u$ be as in Theorem~\ref{thm:mean-gradient} and assume in
    addition that $sp>n$. Then $u$ has a unique continuous
    representative on $\Omega$. If $B_{2R}(x_0)\Subset\Omega$ and
    $\Wpot_{\gamma,p}^{|\mu|}(x_0,2R)<\infty$, this representative is
    Fr\'echet differentiable at $x_0$ and
    \begin{equation}\label{eq:wolff-gradient}
        |\nabla u(x_0)|\le C\left[
        \mathcal A_m(u;x_0,2R)+\Wpot_{\gamma,p}^{|\mu|}(x_0,2R)\right],
    \end{equation}
    where $C=C(n,p,s)$. Here $\nabla u(x_0)$ denotes the classical
    gradient. At Lebesgue points of the weak gradient, the two values
    agree.
\end{theorem}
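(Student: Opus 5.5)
We prove Theorem~\ref{thm:wolff} by upgrading the $L^{p-1}$ affine expansion of Lemma~\ref{lem:mean-affine-expansion} to a pointwise one. The upgrade comes from a Morrey-type oscillation estimate, which is available because $sp>n$.

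\emph{Continuity.} First we establish that $u$ has a continuous representative. By Definition~\ref{def:sola}, $u$ is the a.e.\ limit of weak solutions $u_j$ with smooth data $\mu_j\rightharpoonup\mu$. For $sp>n$ the fractional Morrey embedding $W^{s,p}\hookrightarrow C^{0,s-n/p}$ holds. We combine it with the Wolff potential bound for the oscillation of SOLA from \cite{KuusiMingioneSire2015}. Since $\mu$ is finite, the relevant zero-order potential $\int_0^R(|\mu|(B_\rho)/\rho^{n-sp})^{1/(p-1)}\,\diff\rho/\rho$ is bounded by $C|\mu|(\Omega)^{1/(p-1)}R^{(sp-n)/(p-1)}$, and this is uniform in the center. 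Hence the $u_j$ are locally equicontinuous. Arzel\`a--Ascoli then gives locally uniform convergence along a subsequence to a continuous function, which agrees with $u$ a.e. Uniqueness of the representative is immediate, because two continuous functions that agree a.e.\ agree everywhere.

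\emph{Affine expansion.} Now fix $x_0$ with $\Wpot^{|\mu|}_{\gamma,p}(x_0,2R)<\infty$. Lemma~\ref{lem:mean-affine-expansion} supplies affine maps $\ell_r(x)=a_r+\xi_r\cdot(x-x_0)$ with the following properties: $\xi_r\to\xi$, $|\xi|\le C[\mathcal A_m(u;x_0,2R)+\Wpot^{|\mu|}_{\gamma,p}(x_0,2R)]$, and
\[
\frac1r\Bigl(\fint_{B_r(x_0)}|u-\ell_r|^{m}\,\diff x\Bigr)^{1/m}
+\frac1r\Tail_{m,sp}(u-\ell_r;x_0,r)\longrightarrow0 .
\]
The finiteness of the potential forces $D_\mu(x_0,\rho)\to0$ along a sequence $\rho\to 0$. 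Combined with the sum structure of the excess recurrence, it forces $D_\mu(x_0,\rho)\to0$ in the full sense of the affine excess.

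\emph{Pointwise upgrade.} Rescale $v_r(z)=[u(x_0+rz)-\ell_r(x_0+rz)]/r$. This function solves a nonlocal equation with measure $\widetilde\mu_r$, where $\widetilde\mu_r$ satisfies $D_{\widetilde\mu_r}(0,\rho)=D_\mu(x_0,r\rho)$. Its operator is the fractional $p$-Laplacian shifted by the slope $\xi_r$, namely the kernel $J_p(v(z)-v(y)+\xi_r\cdot(z-y))-J_p(\xi_r\cdot(z-y))$. I would then invoke an $L^\infty$ estimate in the Morrey range:
\[
\sup_{B_{1/2}}|v_r|\le C\Bigl[\Bigl(\fint_{B_1}|v_r|^m\Bigr)^{1/m}+\Tail_{m,sp}(v_r;0,1)+\sup_{\rho\le1}D_{\widetilde\mu_r}(0,\rho)\Bigr].
\]
This estimate should follow from the comparison estimate plus the Morrey embedding. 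The key requirement is that the constant be independent of $\xi_r$; this is where the slope-independent homogeneous affine decay of the paper is used. The right-hand side tends to $0$ as $r\to0$. Hence $\sup_{B_{r/2}(x_0)}|u-\ell_r|=o(r)$. Since $\xi_r\to\xi$ and $a_r\to u(x_0)$ by continuity, we get $u(x)=u(x_0)+\xi\cdot(x-x_0)+o(|x-x_0|)$, which is Fréchet differentiability with $\nabla u(x_0)=\xi$. The estimate \eqref{eq:wolff-gradient} follows from the bound on $|\xi|$.

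\emph{Agreement with the weak gradient.} At a Lebesgue point of the weak gradient, difference quotients of $u$ on $B_r(x_0)$ are controlled in $L^{1}$ average by $\fint_{B_r}|\nabla u-\nabla u(x_0)|$ via Poincaré. Hence $\frac1r\fint_{B_r}|u-u(x_0)-\nabla u(x_0)\cdot(x-x_0)|\to0$. Comparing this with the classical expansion forces $\xi=\nabla u(x_0)$.

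The main obstacle is the slope-uniform sup bound in the pointwise upgrade. The linearized kernel can degenerate in directions where $\xi_r\cdot(z-y)$ is small, so the standard De Giorgi argument must be replaced by the Morrey embedding applied to an $W^{s,p}$-energy estimate. That energy estimate has to be derived for the shifted functional with the $p$-growth term dominating, and I would try first to get it from the monotonicity inequality $(J_p(a+b)-J_p(b))a\ge c|a|^p$.
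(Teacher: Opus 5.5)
The reduction is right: Theorem~\ref{thm:wolff} comes down to a bound of $\sup|u-\ell_r|$ on a smaller ball by the $L^m$ affine excess plus $D_\mu$, with a constant independent of the slope. But that bound is only asserted, and the mechanism you sketch for it cannot deliver a slope-independent constant. That mechanism is an energy estimate for the shifted functional, obtained from $(J_p(a+b)-J_p(b))a\ge c|a|^p$ and followed by Morrey. Its coercive term $c|\delta w|^p$ is slope-free. The cutoff and tail error terms, however, are controlled only by $(|\delta w|+|\xi_r\cdot(x-y)|)^{p-2}|\delta w|\,|w|\,|\delta\eta^p|$. The $|\xi_r|^{p-2}$ part of these terms cannot be absorbed into $|\delta w|^p$ without $\xi_r$-dependent constants.

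Moreover, when $\xi\neq0$ the rescaled remainder $v_r$ is $o(1)$ while $\xi_r\cdot(z-y)$ is of order $|\xi|$. So the linear, anisotropic part of the operator dominates, not the $p$-growth part. You could keep homogeneity in the slope by using the sharper coercivity $(|\delta w|+|\xi_r\cdot(x-y)|)^{p-2}|\delta w|^2$ of Lemma~\ref{lem:algebra}. But then you are in the energy space of the kernel $K_e^0$, of order $1+\beta<2\le n$, where no Morrey embedding exists. This dichotomy is exactly why the paper needs the Schauder estimate of Lemma~\ref{lem:fixed-radius-schauder} inside Lemma~\ref{lem:full-affine-decay-high}.

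The paper never estimates $u-\ell$ through the shifted measure-data equation. It takes the nonlinear $p$-harmonic replacement $v$ of $u$ in $B_r(x_0)$ and splits $u-\ell=(v-\ell)+(u-v)$.

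\emph{The comparison error $w=u-v\in W^{s,p}_0(B_r(x_0))$.} This term does not see the slope. Testing the difference of the equations with $w$ and using \eqref{eq:pcoercive} and \eqref{eq:fractional-morrey-zero} gives $c[w]^p_{W^{s,p}(\R^n)}\le|\mu|(B_r)\|w\|_{L^\infty}\le Cr^{s-n/p}|\mu|(B_r)[w]_{W^{s,p}(\R^n)}$. Hence $\|w\|_{L^\infty(B_r(x_0))}\le CrD_\mu(x_0,r)$, and this is the only place Morrey enters.

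\emph{The homogeneous piece $v-\ell$.} The slope enters only here. It is handled by \eqref{eq:homogeneous-fixed-affine-supremum}, a sup bound for $v-\ell$ by its $L^m$ average and tail with a constant independent of $\ell$. That bound comes from the gradient estimate \eqref{eq:mean-remainder-gradient}, and hence from Lemma~\ref{lem:full-affine-decay-high}. Combining the two pieces gives \eqref{eq:energy-mean-to-uniform}, and taking $\ell=\ell_k$ yields $r_k^{-1}\|u-\ell_k\|_{L^\infty(B_{r_k/8}(x_0))}\le C(E_k+D_k)\to0$.

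\emph{Regularity needed to run the comparison on $u$.} This is a second missing point. Running the comparison on $u$ itself requires $u\in W^{s,p}_{\mathrm{loc}}(\Omega)$ and the weak formulation for test functions in $W^{s,p}_0(B)$. A SOLA, however, only lies in $W^{h,q}$ and is tested against $C_c^\infty$. The paper supplies this in Lemma~\ref{lem:sola-local-weak}, via uniform local bounds on $u_j$ from \eqref{eq:energy-mean-to-uniform} with $\ell=0$, a Caccioppoli inequality, and Fatou; this also yields the continuous representative. Alternatively, you can apply \eqref{eq:energy-mean-to-uniform} to each $u_j$ and pass to the limit using Lemma~\ref{lem:sola-fixed-scale}, \eqref{eq:sola-measure-control} and a.e.\ convergence.

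Your continuity argument via the Kuusi--Mingione--Sire continuity criterion is a reasonable alternative. So is your Poincar\'e argument for agreement with the weak gradient, once $(u)_{B_r(x_0)}$ replaces $u(x_0)$ there.
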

The second theorem uses an $L^\infty$ comparison to strengthen the averaged expansion obtained in Section~\ref{sec:mean-gradient}. The additional
condition $sp>n$ is used only for this classical conclusion.

\subsection{Strategy of proof}
Our argument combines comparison estimates with a
potential iteration in the spirit of Duzaar and Mingione
\cite{DuzaarMingione2011}. Our use of affine approximation is inspired by the work of
Kuusi, Nowak and Sire \cite{KuusiNowakSire2024}
on linear nonlocal equations.  Diening, Kim, Lee and Nowak
\cite[Section~1.4]{DieningKimLeeNowak2025c} explain the difficulty
of obtaining suitable regularity estimates for affine remainders
in the nonlinear nonlocal setting.  For the fractional $p$-Laplacian, we obtain the required control of
the affine remainder by treating the controlled and large slope
regimes separately.

We first consider the homogeneous equation. The key step is the decay estimate for
the uniform affine excess $\Psi_\infty$ defined in
\eqref{eq:uniform-affine-excess} and established in
Lemma~\ref{lem:full-affine-decay-high}, which gives
\[
\Psi_\infty(v;x_0,\rho)
\le C\left(\frac\rho r\right)^\sigma\Psi_\infty(v;x_0,r),
\qquad 0<\rho\le r/32,
\]
with constants independent of the slope of the minimizing affine function.
In the controlled slope regime, this follows from the homogeneous
$C^{1,\alpha}$ estimate of Giovagnoli, Jesus and Silvestre
\cite[Theorem~1.1]{GiovagnoliJesusSilvestre2025}. In the large slope
regime, exact affine subtraction gives a linear equation for the
remainder. After normalization, we compare its kernel with the model
of order $1+\beta$ given by
\[
(p-1)|e\cdot\omega|^{p-2}|x-y|^{-n-(1+\beta)}.
\]
Here $e$ is the direction of the affine slope and
$\omega=(x-y)/|x-y|$. We verify the integrated ellipticity and
regularity assumptions of the divergence form Schauder estimate of
Fern\'andez-Real and Ros-Oton
\cite[Theorem~1.4]{FernandezRealRosOton2024}. This gives the required
bound in the large slope regime. A local interpolation argument then
transfers the uniform decay to the averaged excess $\Psi_m$ in Lemma~\ref{lem:mean-homogeneous-decay}.

We next combine the decay of $\Psi_m$ with the $L^{p-1}$
comparison between the energy approximations and their fractional
$p$-harmonic replacements. Passing to the SOLA limit, Lemma~\ref{lem:mean-one-step} gives
\[
\Psi_m(u;x_0,\theta r)
\le \frac12\Psi_m(u;x_0,r)+CD_\mu(x_0,2r)
\]
for a fixed $\theta\in(0,1)$.
At each point where the Wolff potential is finite, iteration
gives an affine expansion in averaged $L^{p-1}$ and a bound
for its slope $a_*(x_0)$, as proved in
Lemma~\ref{lem:mean-affine-expansion}.

The existence of the weak gradient still has to be proved.
In Lemma~\ref{lem:sola-weak-gradient}, we mollify $u$ and show
that $\nabla u_\varepsilon(x_0)$ converges to $a_*(x_0)$ at
points of finite potential. The affine estimates and the
local $L^{p-1}$ integrability of the potential, established in
Lemma~\ref{lem:wolff-local-Lm}, also bound these gradients by
a fixed $L^{p-1}$ function on each compact subset, see
\eqref{eq:mollified-gradient-domination}.
This allows us to pass to the limit in the integration by
parts formula and obtain the weak gradient of $u$.
At its Lebesgue points where the potential is finite, this
gradient agrees with $a_*$.
The bound for $a_*$ in \eqref{eq:mean-all-scale-bound} then gives
Theorem~\ref{thm:mean-gradient}.

Under the additional condition $sp>n$,
Lemma~\ref{lem:sola-local-weak} shows that the SOLA has a
continuous representative which is a local weak solution.
The $L^\infty$ comparison in
Lemma~\ref{lem:uniform-replacement-comparison} therefore
applies to $u$ and gives \eqref{eq:sola-mean-to-uniform}.
Applying this estimate to the same affine approximations
gives a uniform first-order expansion at every point where
the potential is finite. The continuous representative of
$u$ is therefore Fr\'echet differentiable at these points,
as stated in Theorem~\ref{thm:wolff}.

The paper is organized as follows. In
Section~\ref{sec:preliminaries},  we give some preliminary lemmas. 
Section~\ref{sec:harmonic-replacement} is devoted to  the fractional
$p$-harmonic replacement.  In Section~\ref{sec:affine-excess},  we	establish the homogeneous affine decay.  Section~\ref{sec:mean-gradient}
passes the averaged recurrence to SOLA and proves
Theorem~\ref{thm:mean-gradient}. Section~\ref{sec:classical-gradient}
contains the additional argument in the Morrey range and proves
Theorem~\ref{thm:wolff}.

	\section{Preliminaries}
	\label{sec:preliminaries}
	
	We fix the notation and collect the auxiliary results used in the proofs.
	Throughout the paper, $C>0$ denotes a constant that may change from line
	to line, with its dependence indicated when relevant. We set
	\[
	(f)_{B_R(x_0)}
	:=
	\fint_{B_R(x_0)}f(y)\,\diff y.
	\]
	
	\subsection{Function spaces and notation}
	For the basic properties and embeddings of fractional Sobolev spaces,
	we refer to \cite{DiNezzaPalatucciValdinoci2012}.
	
	For an open set $U\subset\R^n$ and $s\in(0,1)$, $p\in(1,\infty)$, let
	\[
	[w]_{W^{s,p}(U)}^p:=\iint_{U\times U}\frac{|w(x)-w(y)|^p}{|x-y|^{n+sp}}\,\diff x\diff y.
	\]
	We define
	\[
	L_{sp}^{p-1}(\R^n):=\left\{w\in L^{p-1}_{\mathrm{loc}}(\R^n):
	\int_{\R^n}\frac{|w(y)|^{p-1}}{(1+|y|)^{n+sp}}\,\diff y<\infty\right\}.
	\]
	For an open set $U\subset\R^n$, set
	\[
	C_c^\infty(U)
	:=\{\varphi\in C_c^\infty(\R^n):
	\supp\varphi\Subset U\},
	\qquad
	W^{s,p}_0(U)
	:=\overline{C_c^\infty(U)}^{\,W^{s,p}(\R^n)}.
	\]
	We also set
	\[
	X_0^{s,p}(U)
	:=
	\{w\in W^{s,p}(\R^n):
	w=0\ \text{a.e. in }\R^n\setminus U\}.
	\]
	In general,  $
	W_0^{s,p}(U)\subset X_0^{s,p}(U)$. If $B$ is a ball, then
	\[
	W_0^{s,p}(B)=X_0^{s,p}(B),
	\]
	as follows by a dilation about the centre followed by mollification.	
	We use this identification for all local comparison balls.	In addition, we define
	\[
	p':=\frac p{p-1},\qquad J_p(t):=|t|^{p-2}t,\qquad \delta u(x,y):=u(x)-u(y).
	\]

	\subsection{Energy approximations and SOLA}
	We first define the energy weak solutions used for the approximating problems in the definition of SOLA.
	\begin{definition}[Energy weak solution]\label{def:weak}
		Let $f\in(W^{s,p}_0(\Omega))'$. An energy weak solution of the zero
		exterior problem is a function $u\in X_0^{s,p}(\Omega)$ satisfying
		\begin{equation*}
			\iint_{\R^n\times\R^n}
			\frac{J_p(\delta u(x,y))\delta\varphi(x,y)}{|x-y|^{n+sp}}
			\,\diff x\diff y
			=\langle f,\varphi\rangle
			\qquad\text{for every }\varphi\in C_c^\infty(\Omega).
		\end{equation*}
	\end{definition}
A density argument shows that the same identity holds for every
\(\varphi\in W_0^{s,p}(\Omega)\).

We specialize \cite[Definitions~1 and~2]{KuusiMingioneSire2015}
to zero exterior data, with $g_j=g=0$. Define
	\[
	\overline q:=\min\left\{\frac{n(p-1)}{n-s},p\right\}.
	\]
	\begin{definition}[SOLA]\label{def:sola}
		Let $\mu\in\mathcal M(\Omega)$ be extended by zero to $\R^n$.
		A function $u$, equal to zero almost everywhere outside $\Omega$,
		is a SOLA of \eqref{eq:main} if the following conditions hold.
		For some $h\in(0,s)$ and $q\in[m,\overline q)$, one has
		$u\in W^{h,q}(\Omega)$, and $u$ solves the distributional identity
		\begin{equation*}
			\iint_{\R^n\times\R^n}
			\frac{J_p(\delta u(x,y))\delta\varphi(x,y)}{|x-y|^{n+sp}}
			\,\diff x\diff y
			=\int_\Omega\varphi\,\diff\mu
			\qquad\text{for every }\varphi\in C_c^\infty(\Omega).
		\end{equation*}
In addition, there is a sequence
$\{u_j\}\subset W^{s,p}(\R^n)$  solving the approximate Dirichlet problems
 \begin{equation*}
	\begin{cases}
		(-\Delta_p)^s u_j=\mu_j & \text{in }\Omega,\\
		u_j=0 & \text{in }\R^n\setminus\Omega,
	\end{cases}
\end{equation*}
in the sense of Definition~\ref{def:weak}, such that
\begin{equation}\label{eq:sola-convergence}
	u_j\to u\quad\text{a.e. in }\R^n
	\quad\text{and locally in }L^q(\R^n).
\end{equation}
Here $\mu_j\in C_c^\infty(\R^n)$ converge weakly to $\mu$ in the sense
of measures in $\Omega$ and satisfy
\begin{equation}\label{eq:sola-measure-control}
\limsup_{j\to\infty}|\mu_j|(B)\le|\mu|(\overline B)
\qquad\text{for every ball }B\subset\R^n.
\end{equation}
\end{definition}
Since $p>2$, we have $m<\overline q$ and $p>2-s/n$.
Theorem~1.1 of \cite{KuusiMingioneSire2015} therefore yields a
SOLA for every finite signed measure $\mu$. The solution constructed
there belongs to $W^{h,q}(\Omega)$ for every $0<h<s$ and
$m\le q<\overline q$.  Since $q\ge m$, \eqref{eq:sola-convergence} gives strong $L^m$
convergence on every bounded subset of $\R^n$. The boundedness of
$\Omega$ and the common zero exterior values then imply
	\begin{equation}\label{eq:sola-tail-class}
		u_j\to u\text{ in }L^m(\R^n),\qquad
		u\in L_{sp}^m(\R^n).
	\end{equation}
	For SOLA, the comparison estimates are first proved for the
	energy approximations \(u_j\) and then passed to the limit.

	For homogeneous equations, we use the following local notion of solution.
	\begin{definition}[Local weak solution]\label{def:local-weak}
		Let $B\subset\R^n$ be a ball. A function
		\[
		v\in W^{s,p}_{\mathrm{loc}}(B)\cap L_{sp}^{p-1}(\R^n)
		\]
		is a local weak solution of $(-\Delta_p)^sv=0$ in $B$ if
		\[
		\iint_{\R^n\times\R^n}
		\frac{J_p(\delta v(x,y))\delta\varphi(x,y)}{|x-y|^{n+sp}}
		\,\diff x\diff y=0
		\qquad\text{for every }\varphi\in C_c^\infty(B).
		\]
	\end{definition}
	
	\subsection{Algebraic inequalities}
	
	We recall the following inequalities from \cite[Lemma~2.2]{BogeleinDuzaarLiaoMoring2025}.
	\begin{lemma}\label{lem:algebra}
		Let \(p\ge2\). There exists \(C=C(p)\ge1\) such that, for every
		\(a,b\in\R\),
		\begin{equation*}
			C^{-1}(|a|+|b|)^{p-2}|a-b|^2
			\le
			\bigl(J_p(a)-J_p(b)\bigr)(a-b)
			\le
			C(|a|+|b|)^{p-2}|a-b|^2.
		\end{equation*}
		In particular,
		\begin{equation}\label{eq:pcoercive}
			|a-b|^p
			\le
			C\bigl(J_p(a)-J_p(b)\bigr)(a-b).
		\end{equation}
	\end{lemma}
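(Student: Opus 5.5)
The statement to prove is Lemma~\ref{lem:algebra}. I would reduce it to a one-variable integral identity. For $a,b\in\R$,
\[
J_p(a)-J_p(b)=(p-1)(a-b)\int_0^1|b+t(a-b)|^{p-2}\,\diff t .
\]
Hence
\[
\bigl(J_p(a)-J_p(b)\bigr)(a-b)=(p-1)|a-b|^2\,I,\qquad
I:=\int_0^1|b+t(a-b)|^{p-2}\,\diff t ,
\]
and it suffices to prove the two-sided comparison $c(p)(|a|+|b|)^{p-2}\le I\le (|a|+|b|)^{p-2}$.

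The upper bound is immediate. Since $p\ge2$, for $t\in[0,1]$ we have $|b+t(a-b)|\le(1-t)|b|+t|a|\le|a|+|b|$, so $I\le(|a|+|b|)^{p-2}$.

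For the lower bound, assume by symmetry that $|a|\ge|b|$, and note the case $a=b=0$ is trivial. Then $|a|\ge\frac12(|a|+|b|)$. For $t\in[3/4,1]$ we get
\[
|b+t(a-b)|\ge t|a|-(1-t)|b|\ge \tfrac34|a|-\tfrac14|a|=\tfrac12|a|\ge\tfrac14(|a|+|b|).
\]
Integrating only over $[3/4,1]$ gives $I\ge\frac14\,4^{-(p-2)}(|a|+|b|)^{p-2}$. Choosing $C=\max\{p-1,\,4^{p-1}/(p-1)\}$ then gives both inequalities of the main display.

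For \eqref{eq:pcoercive}, use $|a-b|\le|a|+|b|$ together with $p-2\ge0$. This gives $|a-b|^p=|a-b|^{p-2}|a-b|^2\le(|a|+|b|)^{p-2}|a-b|^2$, and the lower bound just proved finishes the argument.

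There is no real obstacle here. The only point needing care is the lower bound on $I$: the segment from $b$ to $a$ may pass through zero, so the integrand cannot be bounded from below pointwise. It has to be bounded from below on the subinterval near the endpoint of larger modulus, which is what restricting to $t\in[3/4,1]$ does.
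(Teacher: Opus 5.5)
Your proof is correct and complete. The paper gives no proof of this lemma; it simply quotes it from \cite[Lemma~2.2]{BogeleinDuzaarLiaoMoring2025}.

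Your argument is the standard self-contained one. The identity $J_p(a)-J_p(b)=(p-1)(a-b)\int_0^1|b+t(a-b)|^{p-2}\,\diff t$ reduces both inequalities to two-sided bounds on the averaged weight $I$. The paper itself uses the same identity later, in Lemma~\ref{lem:affine-linearization}. For the lower bound, you restrict to $t\in[3/4,1]$, near the endpoint of larger modulus, which correctly handles a possible sign change of $b+t(a-b)$ along the segment. The reduction to $|a|\ge|b|$ is legitimate, since $I$ is invariant under $t\mapsto 1-t$ combined with $a\leftrightarrow b$.

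Compared with the citation, your route makes the lemma independent of the external reference and gives an explicit constant, $C=\max\{p-1,\,4^{p-1}/(p-1)\}$, which is $\ge1$ for $p\ge2$. The citation gains only brevity.
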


	\subsection{Poincar\'e and tail estimates}
	
	\begin{lemma}\label{lem:fractional-poincare}
		Let $0<s<1$, $1\le p<\infty$, and $B_r=B_r(x_0)\subset\R^n$. For every
		$w\in W^{s,p}(B_r)$,
		\begin{equation*}
			\|w-(w)_{B_r}\|_{L^p(B_r)}
			\le Cr^s[w]_{W^{s,p}(B_r)}.
		\end{equation*}
		If $w\in W^{s,p}_0(B_r)$, then, with $C=C(n,p,s)$, we have 
		\begin{equation*}
			\|w\|_{L^p(B_r)}
			\le Cr^s[w]_{W^{s,p}(\R^n)}.
		\end{equation*}
	\end{lemma}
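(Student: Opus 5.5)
For the first inequality, I will argue by contradiction and compactness, using scaling to fix the geometry. Translating and dilating via \(\widetilde w(z)=w(x_0+rz)\), the quantity \(\|w-(w)_{B_r}\|_{L^p(B_r)}\) scales like \(r^{n/p}\) and \([w]_{W^{s,p}(B_r)}\) like \(r^{n/p-s}\). It therefore suffices to prove the estimate on the unit ball \(B_1\) with a constant \(C=C(n,p,s)\), and the factor \(r^s\) then follows.

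On \(B_1\) I will give a direct argument rather than use compactness. For \(x\in B_1\), Jensen's inequality gives
\[
|w(x)-(w)_{B_1}|^p\le\fint_{B_1}|w(x)-w(y)|^p\,\diff y .
\]
Since \(|x-y|\le 2\) for \(x,y\in B_1\), we have \(1\le 2^{n+sp}|x-y|^{-n-sp}\). Integrating in \(x\) then yields
\[
\|w-(w)_{B_1}\|_{L^p(B_1)}^p\le \frac{2^{n+sp}}{|B_1|}[w]_{W^{s,p}(B_1)}^p .
\]
Performing the computation directly on \(B_r\) avoids the scaling step altogether. There \(|x-y|\le 2r\), so
\[
\|w-(w)_{B_r}\|^p_{L^p(B_r)}\le \frac{(2r)^{n+sp}}{|B_r|}[w]^p_{W^{s,p}(B_r)}=C(n,p,s)\,r^{sp}[w]^p_{W^{s,p}(B_r)} .
\]
This proves the first claim and is entirely elementary.

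For the second inequality, I will take \(w\in W^{s,p}_0(B_r)\), so that \(w=0\) a.e.\ outside \(B_r\); by the identification \(W^{s,p}_0(B)=X^{s,p}_0(B)\) recalled above this holds a.e. The idea is to exploit the vanishing region \(B_{2r}\setminus B_r\) instead of subtracting the mean. For \(x\in B_r\) and \(y\in A:=B_{3r}(x_0)\setminus B_{2r}(x_0)\), we have \(w(y)=0\) and \(r\le|x-y|\le 4r\). Consequently
\[
|w(x)|^p=\fint_A|w(x)-w(y)|^p\,\diff y\le \frac{(4r)^{n+sp}}{|A|}\int_A\frac{|w(x)-w(y)|^p}{|x-y|^{n+sp}}\,\diff y .
\]
Integrating over \(x\in B_r\) and using \(|A|\simeq r^n\), we obtain
\[
\|w\|^p_{L^p(B_r)}\le C r^{sp}[w]^p_{W^{s,p}(\R^n)} .
\]
On the null set of \(x\) where \(w(x)\) is undefined nothing needs to be checked.

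No step is a genuine obstacle. The only points needing care are that \(C\) is independent of \(r\) and \(x_0\), which holds since \((2r)^{n+sp}/|B_r|\) and \((4r)^{n+sp}/|A|\) are \(C(n,s,p)\,r^{sp}\), and that in the second estimate the full seminorm on \(\R^n\) is needed, because the comparison points \(y\) lie outside \(B_r\).
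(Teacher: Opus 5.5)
Your proof is correct and is essentially the paper's argument. The first bound comes from Jensen's inequality together with $|x-y|\le 2r$ on $B_r\times B_r$. For $w\in W^{s,p}_0(B_r)$, the second comes from pairing $x\in B_r$ with $y$ in the annulus $B_{3r}(x_0)\setminus B_{2r}(x_0)$, where $w$ vanishes and $|x-y|\le 4r$. The opening remarks about scaling and compactness can be dropped, since the direct computation on $B_r$ already gives the factor $C(n,p,s)\,r^{s}$.
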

	\begin{proof}
		Jensen's inequality and $|x-y|\le2r$ on $B_r\times B_r$ give
		\[
		\int_{B_r}|w-(w)_{B_r}|^p\,\diff x
		\le\frac1{|B_r|}\iint_{B_r\times B_r}|w(x)-w(y)|^p\,\diff x\diff y
		\le Cr^{sp}[w]_{W^{s,p}(B_r)}^p.
		\]
		If $w\in W^{s,p}_0(B_r(x_0))$, use its zero extension and integrate
		 over $y\in B_{3r}(x_0)\setminus B_{2r}(x_0)$, we  obtain
		\[
		[w]_{W^{s,p}(\R^n)}^p
		\ge\int_{B_r(x_0)}|w(x)|^p
		\int_{B_{3r}(x_0)\setminus B_{2r}(x_0)}
		|x-y|^{-n-sp}\,\diff y\diff x
		\ge cr^{-sp}\|w\|_{L^p(B_r(x_0))}^p.
		\]
		Taking the $p$-th root proves both estimates.
	\end{proof}

	The next lemma combines a local Taylor estimate with a tail bound to obtain affine decay at smaller radii.
	
	\begin{lemma}\label{lem:taylor-affine-tail}
		Let $p>2$, $s\in(0,1)$, and $\beta:=sp-p+1>0$.
		Let $g:\R^n\to\R$ be measurable and let $L$ be affine. Suppose that for
		some $\alpha\in(0,1]$ and $H\ge1$,
		\begin{equation}\label{eq:abstract-taylor-bound}
			|g(x)-L(x)|\le H|x|^{1+\alpha}\quad \text{for}~x\in B_{1/4}~\text{and}\quad \int_{\R^n\setminus B_{1/4}}
			\frac{|g(y)-L(y)|^{p-1}}{|y|^{n+sp}}\,\diff y
			\le H^{p-1}.
		\end{equation}
	Take $
		\sigma:=\frac12\min\left\{\alpha,\frac\beta{p-1}\right\}>0 $.
		Then there exists $C=C(n,p,s,\alpha)$ such that
		\begin{equation}\label{eq:abstract-affine-tail-decay}
			\frac1\rho\|g-L\|_{L^\infty(B_\rho )}
			+\frac1\rho\Tail_{p-1,sp}(g-L;0,\rho)
			\le CH\rho^\sigma
			\qquad(0<\rho\le1/16).
		\end{equation}
	\end{lemma}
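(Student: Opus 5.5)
The estimate splits into the $L^\infty$ part on $B_\rho$ and the tail part outside $B_\rho$. The first follows at once from the Taylor bound in \eqref{eq:abstract-taylor-bound}. The tail is split at the radius $1/4$.

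\emph{Local term.} Since $\rho\le1/16<1/4$, the hypothesis gives
\[
\frac1\rho\|g-L\|_{L^\infty(B_\rho)}\le H\rho^\alpha\le H\rho^\sigma,
\]
because $\rho<1$ and $\sigma\le\alpha$.

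\emph{Tail term.} We write
\[
\rho^{-(p-1)}\Tail_{p-1,sp}(g-L;0,\rho)^{p-1}
=\rho^{sp-p+1}\left(\int_{B_{1/4}\setminus B_\rho}+\int_{\R^n\setminus B_{1/4}}\right)\frac{|g-L|^{p-1}}{|y|^{n+sp}}\,\diff y,
\]
and note that $sp-p+1=\beta$.

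The outer integral is at most $H^{p-1}$ by hypothesis, so this part contributes $\rho^\beta H^{p-1}$.

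On the annulus $B_{1/4}\setminus B_\rho$ we insert the pointwise Taylor bound and use polar coordinates:
\[
\int_{B_{1/4}\setminus B_\rho}\frac{H^{p-1}|y|^{(1+\alpha)(p-1)}}{|y|^{n+sp}}\,\diff y
= CH^{p-1}\int_\rho^{1/4} t^{(1+\alpha)(p-1)-sp-1}\,\diff t .
\]
The exponent satisfies $(1+\alpha)(p-1)-sp=\alpha(p-1)-\beta$, which leads to three cases.
\begin{itemize}
\item If $\alpha(p-1)>\beta$, the integral is bounded by a constant. After multiplying by $\rho^\beta$, this part contributes $CH^{p-1}\rho^\beta$.
\item If $\alpha(p-1)<\beta$, the integral is bounded by $C\rho^{\alpha(p-1)-\beta}$. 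After multiplying by $\rho^\beta$, this part contributes $CH^{p-1}\rho^{\alpha(p-1)}$.
\item If $\alpha(p-1)=\beta$, the integral equals $\log(1/(4\rho))$. Since $\rho^\beta\log(1/\rho)\le C\rho^{\beta/2}$ on $(0,1/16]$, this part contributes $CH^{p-1}\rho^{\beta/2}$.
\end{itemize}
The constants in the first two cases depend on the gap $|\alpha(p-1)-\beta|$. That gap is determined by $n,p,s,\alpha$, so all constants are $C=C(n,p,s,\alpha)$.

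\emph{Combining.} In every case the bracket is at most $CH^{p-1}\rho^{\min\{\alpha(p-1),\beta\}/2}$, using $\rho\le1$ to pass to the smaller exponent. Taking the $(p-1)$-th root gives
\[
\frac1\rho\Tail_{p-1,sp}(g-L;0,\rho)\le CH\rho^{\min\{\alpha,\beta/(p-1)\}/2}=CH\rho^\sigma .
\]
Adding the local term proves \eqref{eq:abstract-affine-tail-decay}.

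The only delicate point is the borderline case $\alpha(p-1)=\beta$. The factor $\tfrac12$ in the definition of $\sigma$ is exactly what absorbs the logarithm there. The assumption $H\ge1$ is not needed for this argument.
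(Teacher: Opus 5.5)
Your proof is correct and follows the same route as the paper: the Taylor bound gives the $L^\infty$ term, and the tail is split at radius $1/4$, using polar coordinates on the annulus and the hypothesis on the outer region. The only difference is minor: the paper first uses $t^{\alpha(p-1)}\le t^{\sigma(p-1)}$ for $t<1$, so that the exponent satisfies $\sigma(p-1)-\beta\le-\beta/2$ uniformly. This replaces your three-case analysis, whose constants degenerate as $\alpha(p-1)\to\beta$, with a single estimate whose constant does not depend on $\alpha$.
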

	
	\begin{proof}
		Set $h:=g-L$ and $m:=p-1$. Then $sp=m+\beta$. By \eqref{eq:abstract-taylor-bound},
		\[
		\frac1\rho\|h\|_{L^\infty(B_\rho)}
		\le H\rho^\alpha
		\le H\rho^\sigma,
		\qquad 0<\rho\le\frac1{16}.
		\]
		For the tail, split
		\[
		\int_{\R^n\setminus B_\rho}
		\frac{|h(y)|^m}{|y|^{n+sp}}\,\diff y
		=
		\int_{B_{1/4}\setminus B_\rho}
		\frac{|h(y)|^m}{|y|^{n+sp}}\,\diff y
		+
		\int_{\R^n\setminus B_{1/4}}
		\frac{|h(y)|^m}{|y|^{n+sp}}\,\diff y.
		\]
		The choice of $\sigma$ ensures that $\sigma\le\alpha$ and $m\sigma<\beta$.
        Hence \eqref{eq:abstract-taylor-bound} gives
		\begin{equation*}
			\begin{split}
				\frac1\rho\Tail_{p-1,sp}(h;0,\rho)
				&\le	\rho^{\beta/m}
				\left(
				\int_{B_{1/4}\setminus B_\rho}
				\frac{|h(y)|^m}{|y|^{n+sp}}\,\diff y
				\right)^{1/m}
				+
				H\rho^{\beta/m}\\
				&\le  C	\rho^{\beta/m}\left( H^m \int_\rho^{1/4} t^{\alpha m-\beta-1}\,\diff t	\right)^{1/m}+
				H\rho^{\beta/m}\\
				&\le C	\rho^{\beta/m} H
				\left( \int_\rho^{1/4}
				t^{m\sigma-\beta-1}\,\diff t	\right)^{1/m}+
				H\rho^{\beta/m}\\
				&\le
				CH\rho^\sigma
				+
				H\rho^{\beta/m}
				\le
				CH\rho^\sigma.
			\end{split}
		\end{equation*}
		Together with the local bound this gives \eqref{eq:abstract-affine-tail-decay}.
	\end{proof}
	
	The following lemma compares	$\sum_{k=0}^\infty D_\mu(x_0,\theta^kR)$ with the corresponding Wolff
	potential.
	
	\begin{lemma}\label{lem:dyadic-potential}
		Assume \eqref{eq:gradient-parameter-range}, let $\theta\in(0,1/2)$,
		and set $r_k=\theta^kR$. Then
		\begin{equation}\label{eq:dyadic-wolff}
			C^{-1}\sum_{k=1}^\infty D_\mu(x_0,r_k)
			\le\Wpot_{\gamma,p}^{|\mu|}(x_0,R)
			\le C\sum_{k=0}^\infty D_\mu(x_0,r_k).
		\end{equation}
		If $B_{2R}(x_0)\Subset\Omega$, then also
		\begin{equation}\label{eq:full-dyadic-wolff}
			\sum_{k=0}^\infty D_\mu(x_0,r_k)
			\le C\Wpot_{\gamma,p}^{|\mu|}(x_0,2R),
		\end{equation}
		Here $C=C(n,p,s,\theta)$.
	\end{lemma}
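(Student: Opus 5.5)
The plan is to compare the integrand $D_\mu(x_0,\rho)/\rho$ on each interval $[r_{k+1},r_k]$ with its values at the endpoints, using only the monotonicity of $\rho\mapsto|\mu|(B_\rho(x_0))$ and the fact that $\rho^{-(n-\beta)/(p-1)}$ changes by at most a factor depending on $\theta$ over such an interval. Write $\kappa:=(n-\beta)/(p-1)>0$; note $n-\beta=n-\gamma p$, so $D_\mu(x_0,\rho)$ is exactly the Wolff integrand. For $\rho\in[r_{k+1},r_k]$, monotonicity gives
\[
\theta^{\kappa}D_\mu(x_0,r_{k+1})\le D_\mu(x_0,\rho)\le \theta^{-\kappa}D_\mu(x_0,r_k),
\]
since $|\mu|(B_{r_{k+1}})\le|\mu|(B_\rho)\le|\mu|(B_{r_k})$ and $(r_{k+1}/\rho)^{\kappa}\ge\theta^{\kappa}$, $(r_k/\rho)^{\kappa}\le\theta^{-\kappa}$.

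Integrating against $\diff\rho/\rho$ over $[r_{k+1},r_k]$, where $\int_{r_{k+1}}^{r_k}\diff\rho/\rho=\log(1/\theta)$, and summing over $k\ge0$, we obtain
\[
\theta^{\kappa}\log(1/\theta)\sum_{k=0}^\infty D_\mu(x_0,r_{k+1})\le\Wpot_{\gamma,p}^{|\mu|}(x_0,R)\le\theta^{-\kappa}\log(1/\theta)\sum_{k=0}^\infty D_\mu(x_0,r_k).
\]
Reindexing the left sum gives both inequalities in \eqref{eq:dyadic-wolff}, with $C=C(n,p,s,\theta)$. Monotone convergence justifies interchanging sum and integral, and the statement is meaningful even when both sides are infinite.

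For \eqref{eq:full-dyadic-wolff}, only the $k=0$ term $D_\mu(x_0,R)$ is missing from the left side of \eqref{eq:dyadic-wolff}. We control it by integrating over $[R,2R]$: for $\rho\in[R,2R]$, $D_\mu(x_0,\rho)\ge 2^{-\kappa}D_\mu(x_0,R)$, hence
\[
D_\mu(x_0,R)\le \frac{2^\kappa}{\log 2}\int_R^{2R}D_\mu(x_0,\rho)\frac{\diff\rho}{\rho}.
\]
Adding this to the first inequality of \eqref{eq:dyadic-wolff}, together with $\Wpot_{\gamma,p}^{|\mu|}(x_0,R)\le\Wpot_{\gamma,p}^{|\mu|}(x_0,2R)$, yields \eqref{eq:full-dyadic-wolff}. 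The hypothesis $B_{2R}(x_0)\Subset\Omega$ only ensures that the measure quantities on $B_{2R}$ concern $\mu$ inside $\Omega$; $\mu$ is extended by zero anyway. There is no genuine obstacle. The only point requiring care is the bookkeeping of the endpoint term $k=0$, which is exactly why the enlarged radius $2R$ appears in \eqref{eq:full-dyadic-wolff}. The restriction $\theta<1/2$ is not needed beyond $\theta<1$.
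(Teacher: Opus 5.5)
Your proposal is correct and matches the paper's proof: the same two-sided comparison of $D_\mu(x_0,\rho)$ with its endpoint values on $[r_{k+1},r_k]$ via monotonicity of $|\mu|(B_\rho(x_0))$, followed by integration against $\diff\rho/\rho$ and summation, and the same treatment of the missing $k=0$ term by averaging over $[R,2R]$. The only cosmetic difference is that the paper works with $D_\mu^m$ and the exponent $a=n-\beta$, while you work with $D_\mu$ and $\kappa=a/m$.
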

	\begin{proof}
		Take $a=n-\beta>0$ and $m=p-1$. For $r_{k+1}\le\rho\le r_k$,
		monotonicity of the measure gives
		\[
		\theta^aD_\mu(x_0,r_{k+1})^m
		\le D_\mu(x_0,\rho)^m
		\le\theta^{-a}D_\mu(x_0,r_k)^m.
		\]
		Integration against $\diff\rho/\rho$ and summation prove
		\eqref{eq:dyadic-wolff}. For $R\le\rho\le2R$, the same argument gives
		$D_\mu(x_0,R)\le2^{a/m}D_\mu(x_0,\rho)$. Thus
		\[
		D_\mu(x_0,R)
		\le\frac{2^{a/m}}{\log2}\int_R^{2R}
		D_\mu(x_0,\rho)\frac{\diff\rho}{\rho}.
		\]
		Together with the first inequality in \eqref{eq:dyadic-wolff}, this
		proves \eqref{eq:full-dyadic-wolff}.
		
	\end{proof}
	
	\subsection{A divergence form Schauder estimate}
	\label{subsec:preliminary-schauder}
	
	We recall the divergence form Schauder estimate used in the large slope argument. We first state the kernel assumptions and the corresponding weak formulation.
	
	Let $2\mathfrak s\in(0,2)$ and let $K:\R^n\times\R^n\setminus\{x=y\}\to[0,\infty)$ be a measurable kernel satisfying
	\begin{equation*}
		K(x,y)=K(y,x)\qquad\text{for a.e. }(x,y)\in\R^n\times\R^n.
	\end{equation*}
	Given $\alpha\in(0,1]$ and $\lambda,\Lambda,M>0$,  we define
	\[
	K\in\mathscr K(2\mathfrak s,\alpha;\lambda,\Lambda,M)
	\]
	if, for every $x,h\in\R^n$, $\rho>0$, and $\xi\in\R^n$,
	\begin{align}
		\rho^{2\mathfrak s}
		\int_{B_{2\rho}(x)\setminus B_\rho(x)}K(x,y)\,\diff y
		&\le \Lambda,
		\label{eq:linear-kernel-upper}\\
		\rho^{2\mathfrak s-2}
		\int_{B_\rho(x)}|\xi\cdot(x-y)|^2K(x,y)\,\diff y
		&\ge \lambda|\xi|^2,
		\label{eq:linear-kernel-lower}\\
		\int_{B_{2\rho}(x)\setminus B_\rho(x)}
		|K(x+h,y+h)-K(x,y)|\,\diff y
		&\le M|h|^\alpha\rho^{-2\mathfrak s},
		\label{eq:linear-kernel-translation}\\
		\int_{B_{2\rho}\setminus B_\rho}
		|K(x,x+z)-K(x,x-z)|\,\diff z
		&\le M\rho^{\alpha-2\mathfrak s}.
		\label{eq:linear-kernel-almost-even}
	\end{align}
	For a bounded open set $D\subset\R^n$, we set
	\[
	\mathcal Q(D):=(\R^n\times\R^n)
	\setminus\bigl((\R^n\setminus D)\times(\R^n\setminus D)\bigr)
	\]
	and define
	\[
	\mathcal H_K(D):=
	\left\{W:\
	\iint_{\mathcal Q(D)}|W(x)-W(y)|^2K(x,y)\,\diff x\diff y<\infty
	\right\}.
	\]
	Whenever the integral is absolutely convergent, we write
	\[
	\mathcal E_K(V,\varphi)
	:=\iint_{\R^n\times\R^n}
	\delta V(x,y)\delta\varphi(x,y)K(x,y)\,\diff x\diff y.
	\]
	For $W\in\mathcal H_K(D)$ and $F\in L^1_{\mathrm{loc}}(D)$,
	we say that $\mathcal L_KW=F$ weakly in $D$ if
	\[
	\mathcal E_K(W,\varphi)=\int_D F\varphi\,\diff x
	\qquad\text{for every }\varphi\in C_c^\infty(D).
	\]
	
	The following a priori estimate is the scaled form of
	\cite[Theorem~1.4]{FernandezRealRosOton2024} for absolutely continuous
	symmetric kernels. We take the target regularity exponent in that
	reference to be $1+\alpha$ and its auxiliary parameter to be
	$\varepsilon=\alpha/2$. Since $1+\alpha<2\mathfrak s$, the additional
	kernel assumption required there for exponents above $2\mathfrak s$
	is not needed. 
	
	The data also satisfy the integrability condition in
	\cite[Definition~4.1]{FernandezRealRosOton2024}, since
	\[
	q_\alpha=\frac n{2\mathfrak s-1-\alpha}
	>\frac n{2\mathfrak s}>\frac{2n}{n+2\mathfrak s}.
	\]

	\begin{lemma}[Theorem~1.4,~\cite{FernandezRealRosOton2024}]
		\label{lem:fixed-radius-schauder}
		Let $n>2\mathfrak s$, $2\mathfrak s\in(1,2)$, and
		\[
		0<\alpha<2\mathfrak s-1,
		\qquad
		q_\alpha
		:=
		\frac{n}{2\mathfrak s-(1+\alpha)}.
		\]
		Suppose	$ K\in\mathscr K (2\mathfrak s,\alpha;\lambda,\Lambda,M)$.	Let $W\in
		\mathcal H_K(B_{1/2})
		\cap C_{\mathrm{loc}}^{1,\alpha}(B_{1/2})
		\cap L^\infty(\R^n)$
		be a weak solution of
		\[
		\mathcal L_KW=F
		\qquad\text{in }B_{1/2},
		\qquad
		F\in L^{q_\alpha}(B_{1/2}).
		\]
		Then, with $C=C(n,\mathfrak s,\alpha,\lambda,\Lambda,M)$,
		\begin{equation*}
			\|W\|_{C^{1,\alpha}(B_{1/4})}
			\le
			C\left(
			\|W\|_{L^\infty(\R^n)}
			+
			\|F\|_{L^{q_\alpha}(B_{1/2})}
			\right).
		\end{equation*}
	\end{lemma}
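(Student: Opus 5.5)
The plan is to deduce the estimate directly from \cite[Theorem~1.4]{FernandezRealRosOton2024} by rescaling and translating, checking that each hypothesis of the lemma matches the corresponding hypothesis in that reference with controlled constants. No new analysis is intended: the work is bookkeeping on the kernel class $\mathscr K(2\mathfrak s,\alpha;\lambda,\Lambda,M)$ and on the data norm. One point must be confirmed in the reference rather than assumed, namely that its conditions are the four conditions \eqref{eq:linear-kernel-upper}--\eqref{eq:linear-kernel-almost-even} up to harmless constants.

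\textbf{Step 1: the reference statement and its hypotheses.} As I expect it to read, \cite[Theorem~1.4]{FernandezRealRosOton2024} gives the a priori bound $\|W\|_{C^{1+\alpha}(B_{1/2})}\le C(\|W\|_{L^\infty(\R^n)}+\|F\|_{L^{q}(B_1)})$ for weak solutions in $B_1$. I would check the following against its assumptions.
\begin{enumerate}[(i)]
\item Its kernel conditions should be the four conditions \eqref{eq:linear-kernel-upper}--\eqref{eq:linear-kernel-almost-even}, up to constants. This includes the integrated coercivity \eqref{eq:linear-kernel-lower} replacing a pointwise lower bound.
\item Since the target exponent $1+\alpha$ is strictly below $2\mathfrak s$, the extra kernel hypothesis that the reference imposes for exponents above $2\mathfrak s$ does not apply. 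The auxiliary parameter is taken to be $\varepsilon=\alpha/2$.
\item The a priori regularity $W\in C^{1,\alpha}_{\mathrm{loc}}\cap L^\infty(\R^n)\cap\mathcal H_K$ is exactly what the reference requires of the solution.
\item For the data, $F\in L^{q_\alpha}$ with $q_\alpha=n/(2\mathfrak s-1-\alpha)$ is the Lebesgue space scale-matched to $C^{1+\alpha}$, by the computation below. The condition $q_\alpha>2n/(n+2\mathfrak s)$, already noted before the statement, places $F$ in the dual space used in \cite[Definition~4.1]{FernandezRealRosOton2024}. It is here that $n>2\mathfrak s$ is used.
\end{enumerate}

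\textbf{Step 2: scaling and translation.} For $x_1\in B_{1/4}$ and $r=1/8$, set
\[
W_r(z)=W(x_1+rz),\qquad K_r(z,w)=r^{n+2\mathfrak s}K(x_1+rz,x_1+rw),\qquad F_r(z)=r^{2\mathfrak s}F(x_1+rz).
\]
Then $\mathcal L_{K_r}W_r=F_r$ weakly in $B_1$, which follows by a change of variables in $\mathcal E_K$. I would check the kernel conditions one at a time.
\begin{enumerate}[(a)]
\item Conditions \eqref{eq:linear-kernel-upper} and \eqref{eq:linear-kernel-lower} are exactly scale invariant.
\item Condition \eqref{eq:linear-kernel-translation} acquires the factor $r^{\alpha}\le1$.
\item Condition \eqref{eq:linear-kernel-almost-even} likewise acquires the factor $r^{\alpha}\le 1$.
\end{enumerate}
Hence $K_r\in\mathscr K(2\mathfrak s,\alpha;\lambda,\Lambda,M)$ with the same constants. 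For the data,
\[
\|F_r\|_{L^{q_\alpha}(B_1)}=r^{2\mathfrak s-n/q_\alpha}\|F\|_{L^{q_\alpha}(B_r(x_1))}=r^{1+\alpha}\|F\|_{L^{q_\alpha}(B_r(x_1))}.
\]
This is precisely the factor by which the $C^{1,\alpha}$ seminorm scales, confirming that $q_\alpha$ is the right exponent.

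\textbf{Step 3: covering.} Applying the reference to $W_r$ and scaling back bounds $\nabla W$ and $[\nabla W]_{C^{0,\alpha}}$ on $B_{r/2}(x_1)$. Since $B_r(x_1)\subset B_{1/2}$, both are controlled by $C(\|W\|_{L^\infty(\R^n)}+\|F\|_{L^{q_\alpha}(B_{1/2})})$, with constants depending on the fixed value $r=1/8$. Finitely many such balls cover $B_{1/4}$. For two points at distance at least $r/2$, the H\"older quotient of $\nabla W$ is controlled by the sup bound on $\nabla W$. The remaining term $\|W\|_{L^\infty}$ in the $C^{1,\alpha}$ norm is trivial.

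The main obstacle is Step 1(i): confirming that the reference's weak formulation and kernel conditions, notably its almost-evenness and translation-regularity conditions, coincide with \eqref{eq:linear-kernel-translation}--\eqref{eq:linear-kernel-almost-even} for absolutely continuous symmetric kernels. If they differ by normalisation, I expect only a change in the constants $M$, $\Lambda$.
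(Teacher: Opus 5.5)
Your proposal is correct and follows the same route as the paper. The paper likewise gives no argument beyond invoking \cite[Theorem~1.4]{FernandezRealRosOton2024} with target exponent $1+\alpha$ and $\varepsilon=\alpha/2$, noting that $1+\alpha<2\mathfrak s$ removes the extra kernel hypothesis and that $q_\alpha>n/(2\mathfrak s)>2n/(n+2\mathfrak s)$ gives the required integrability of $F$; your scaling check and covering with $r=1/8$ spell out what the paper calls the ``scaled form'', where a single dilation by the factor $1/2$ about the origin would already suffice.
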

	
	\subsection{Interior regularity for fractional \texorpdfstring{$p$}{p}-harmonic functions}
	\label{subsec:preliminary-homogeneous-C1a}
	
	The following interior regularity estimate will be used for
    fractional $p$-harmonic functions. 
	
	\begin{lemma}[Theorem~1.1,~\cite{GiovagnoliJesusSilvestre2025}]
		\label{lem:fixed-radius-homogeneous-C1a}
		Let $s\in(0,1)$ and $2\le p<\frac{2}{1-s}$. Let
		$U$	be a local weak solution of
		\[
		(-\Delta_p)^sU=0
		\qquad\text{in }B_2.
		\]
		Then there exist $\alpha\in(0,1)$ and $C\ge1$, depending only on $n,p,s$, such that $U\in C^{1,\alpha}(B_1)$ and
		\begin{equation*}
			\|U\|_{C^{1,\alpha}(B_1)}
			\le
			C\left[
			\|U\|_{L^\infty(B_2)}
			+
			\left(
			\int_{\R^n\setminus B_2}
			\frac{|U(z)|^{p-1}}{|z|^{n+sp}}\,\diff z
			\right)^{1/(p-1)}
			\right].
		\end{equation*}
	\end{lemma}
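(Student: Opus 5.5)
The plan is to reduce the statement to \cite[Theorem~1.1]{GiovagnoliJesusSilvestre2025}. Their theorem is formulated on a fixed reference ball, with a possibly different normalization of the tail and possibly for a different notion of solution. So the work is in three reductions: the notion of solution, the geometry of the balls, and the tail bookkeeping. Set
\[
N:=\|U\|_{L^\infty(B_2)}+\left(\int_{\R^n\setminus B_2}\frac{|U(z)|^{p-1}}{|z|^{n+sp}}\,\diff z\right)^{1/(p-1)}.
\]
We may assume $N<\infty$, otherwise there is nothing to prove. If $N=0$ then $U=0$ a.e. and the claim is trivial, so by the homogeneity of the equation we may replace $U$ by $U/N$ and assume $N=1$.

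\textbf{Step 1: notion of solution.} A local weak solution in the sense of Definition~\ref{def:local-weak} is locally bounded and locally H\"older continuous by \cite{DiCastroKuusiPalatucci2014,DiCastroKuusiPalatucci2016}. By the standard equivalence of weak and viscosity solutions for the fractional $p$-Laplacian (see \cite{KorteKuusi2010} for the local analogue and the nonlocal counterparts), its continuous representative is a viscosity solution, or directly an admissible weak solution, in the framework of \cite{GiovagnoliJesusSilvestre2025}. I would check which of the two their theorem requires and quote the corresponding equivalence. The range $2\le p<2/(1-s)$ is exactly their hypothesis.

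\textbf{Step 2: covering.} Fix $x_1\in B_1$ and a small radius $r_0=r_0(n)$, say $r_0=1/4$, so that $B_{2r_0}(x_1)\subset B_{3/2}$. Consider the rescaled function
\[
U_{x_1}(z):=U(x_1+r_0z).
\]
It solves $(-\Delta_p)^sU_{x_1}=0$ in the reference ball of their theorem. The cited estimate then bounds $\|U_{x_1}\|_{C^{1,\alpha}}$ on a smaller ball by $\|U_{x_1}\|_{L^\infty}$ on the reference ball plus the tail of $U_{x_1}$ outside it. We split that tail into two parts.
\begin{itemize}
\item The part of the tail coming from $B_2$ is bounded by $C\|U\|_{L^\infty(B_2)}$.
\item The part coming from $\R^n\setminus B_2$ is handled by the inequality $|z-x_1|\ge|z|/2$ for $|z|\ge2$ and $|x_1|<1$, which compares $|x_1+r_0z|^{-n-sp}$-type weights with $|z|^{-n-sp}$ up to constants depending on $n,p,s$.
\end{itemize}
Undoing the scaling costs only powers of $r_0$. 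Finitely many such balls, with the number depending only on $n$, cover $B_1$. The $C^{1,\alpha}$ seminorm between two points in different balls is controlled either by $2\|\nabla U\|_\infty|x-y|^{-\alpha}$ when $|x-y|\ge r_0/2$, or by a single ball containing both points otherwise.

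\textbf{Main obstacle.} The delicate point is Step~1. We must make sure that the hypotheses of \cite{GiovagnoliJesusSilvestre2025} are met by a local weak solution that merely lies in $W^{s,p}_{\mathrm{loc}}(B_2)\cap L^{p-1}_{sp}(\R^n)$ with possibly unbounded exterior values. Their result may assume global boundedness, or may phrase the tail in $L^\infty$ or in a weighted $L^1$ norm. If global boundedness is required, I would first truncate. Write $U=U\chi_{B_{3/2}}+U\chi_{\R^n\setminus B_{3/2}}$. The truncated function $U\chi_{B_{3/2}}$ solves in $B_{5/4}$ the equation $(-\Delta_p)^s(\cdot)=f$ with a right-hand side $f$ generated by the far part. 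On $B_{5/4}$ this $f$ is bounded and Lipschitz, with norms controlled by the $L^{p-1}$-tail, since the singularity is at distance at least $1/4$. I would then invoke the version of their theorem with bounded, or Lipschitz, right-hand side. The same tail computation gives the constant depending only on $n,p,s$, and hence the stated estimate with $\alpha=\alpha(n,p,s)$.
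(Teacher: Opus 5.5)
The paper gives no proof of this lemma. It quotes it directly as Theorem~1.1 of \cite{GiovagnoliJesusSilvestre2025}, in the form stated here: a local weak solution as in Definition~\ref{def:local-weak}, the bound $\|U\|_{L^\infty(B_2)}$ plus the $L^{p-1}$ tail outside $B_2$, and the estimate on $B_1$. The only check the paper makes is that $sp>p-1$ forces $p<2/(1-s)$. So reducing to that theorem is the paper's route. Your Step~2 (rescaling about $x_1\in B_1$ with $r_0=1/4$, and splitting the tail at $\partial B_2$ using $|y-x_1|\ge|y|/2$) is correct but unnecessary. Step~1 is also not needed, since the cited result is already formulated for local weak solutions.

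Two of your contingency arguments would not hold up if you actually needed them.

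First, \cite{KorteKuusi2010} is a Wolff potential paper for local equations with measure data. It says nothing about weak versus viscosity solutions. The equivalence you want for the fractional $p$-Laplacian is due to Korvenp\"a\"a, Kuusi and Lindgren.

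Second, your truncation fallback transplants a linear-case argument. For $\widetilde U=U\chi_{B_{3/2}}$ and $x\in B_{5/4}$, the right-hand side is
\[
f(x)=2\int_{\R^n\setminus B_{3/2}}\frac{J_p(U(x))-J_p(U(x)-U(y))}{|x-y|^{n+sp}}\,\diff y .
\]
For $p>2$ this depends on $x$ through $U(x)$ itself, not only through the kernel. It is bounded after your normalization $N=1$. A priori, however, it has only the H\"older regularity of $U$ from \cite{DiCastroKuusiPalatucci2016}. Claiming it is Lipschitz presupposes the gradient bound you are trying to prove. You would need an inhomogeneous $C^{1,\alpha}$ result with merely bounded or H\"older data, and you have not identified one in the reference.

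Because the cited theorem already includes the tail term, neither detour is required, and the main line of your proposal is fine.
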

	
	\section{Harmonic replacement and comparison estimates}
	\label{sec:harmonic-replacement}
	
	We introduce the fractional $p$-harmonic replacement and present its
	interior estimates. We then derive the equation for an affine
	remainder and prove the comparison estimate in $L^{p-1}$.
	
	For $1<p<\infty$, $s\in(0,1)$, $u\in W^{s,p}(\R^n)$, and
	$B_r(x_0)\Subset\Omega$,   we set
	\[
	\mathcal X_u(B_r(x_0))
	:=
	u+W^{s,p}_0(B_r(x_0))
	\]
	and
	\[
	\mathscr E(w)
	:=
	\frac1p
	\iint_{\R^n\times\R^n}
	\frac{|\delta w(x,y)|^p}{|x-y|^{n+sp}}
	\,\diff x\,\diff y.
	\]
	For a minimizing sequence $v_j=u+w_j$, the triangle inequality for
	the Gagliardo seminorm and Lemma~\ref{lem:fractional-poincare} bound
	$w_j$ in $W^{s,p}_0(B_r(x_0))$. Weak compactness and lower
	semicontinuity therefore give a minimizer. Strict convexity in the
	differences, together with the fixed exterior values, gives uniqueness.
	Thus there is a unique minimizer
	\[
	v\in\mathcal X_u(B_r(x_0))
	\]
	of \(\mathscr E\). We call \(v\) the fractional \(p\)-harmonic
	replacement of \(u\) in \(B_r(x_0)\). In particular,  it satisfies
	\begin{equation}\label{eq:harmonic-replacement}
		\begin{cases}
			(-\Delta_p)^s v=0
			&\text{weakly in }B_r(x_0),\\
			v=u
			&\text{a.e. in }\R^n\setminus B_r(x_0).
		\end{cases}
	\end{equation}
	
	\subsection{Homogeneous estimates}
	
	We begin with a local bound for the harmonic replacements.
	\begin{lemma}
		\label{lem:local-boundedness}
		Let $
		v\in W^{s,p}(\R^n)$
		be a weak solution of \eqref{eq:harmonic-replacement}.
		Then, for every $c\in\R$, the following estimate holds with $C=C(n,p,s)$,
		\begin{equation}\label{eq:local-boundedness}
			\|v-c\|_{L^\infty(B_{r/2}(x_0))}
			\le C\left[
			\left(\fint_{B_r(x_0)}|v-c|^p\,\diff x\right)^{1/p}
			+\Tail_{p-1,sp}(v-c;x_0,r)
			\right].
		\end{equation}
	\end{lemma}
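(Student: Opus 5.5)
This estimate is the standard local boundedness result for fractional $p$-harmonic functions of Di Castro, Kuusi and Palatucci \cite{DiCastroKuusiPalatucci2016}, applied to the shifted function $v-c$. I will reduce to that result rather than reprove it.

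\emph{Reduction.} The operator only sees differences, since $\delta(v-c)=\delta v$. So for any constant $c$, the function $w:=v-c$ is again a weak solution of $(-\Delta_p)^sw=0$ in $B_r(x_0)$. Moreover $w\in W^{s,p}_{\mathrm{loc}}(B_r(x_0))$, and because $v\in W^{s,p}(\R^n)\subset L^p(\R^n)$ we also have $w\in L^{p-1}_{sp}(\R^n)$. Hence $w$ is a local weak solution in the sense of Definition~\ref{def:local-weak}. By translation and the scaling $z\mapsto x_0+rz$, both sides of \eqref{eq:local-boundedness} transform consistently: the $L^p$ average and $\Tail_{p-1,sp}$ are scale invariant in the required way. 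It therefore suffices to take $x_0=0$ and $r=1$.

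\emph{Main step.} I would invoke \cite[Theorem~1.1]{DiCastroKuusiPalatucci2016}. For a subsolution it gives, for every $\delta\in(0,1]$,
\[
\sup_{B_{1/2}}w\le \delta\,\Tail_{p-1,sp}(w_+;0,1/2)+C\delta^{-(p-1)n/(sp^2)}\Bigl(\fint_{B_1}w_+^p\Bigr)^{1/p}.
\]
Take $\delta=1$ and apply this to both $w$ and $-w$. Since the tail there is taken at radius $1/2$, I need to pass to radius $1$:
\[
\Tail_{p-1,sp}(w;0,1/2)^{p-1}\le C\int_{B_1\setminus B_{1/2}}|w|^{p-1}\,\diff y+C\,\Tail_{p-1,sp}(w;0,1)^{p-1}.
\]
The annulus integral is bounded by $\bigl(\fint_{B_1}|w|^p\bigr)^{(p-1)/p}$ via H\"older. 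This gives \eqref{eq:local-boundedness}.

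\emph{Main obstacle.} The only delicate point is checking that \cite{DiCastroKuusiPalatucci2016} applies to the replacement, that is, that the truncated test functions $(w-k)_+\phi^p$ are admissible. This holds because $w\in W^{s,p}(\R^n)$ and $\phi\in C_c^\infty(B_1)$, and density extends the weak formulation from $C_c^\infty$ test functions to $W^{s,p}_0$. If a self-contained proof were wanted instead, I would run De Giorgi iteration on the levels $k_j=k(1-2^{-j})$ and the balls $B_{1/2+2^{-j-1}}$. That uses the Caccioppoli inequality with tail term, obtained by testing with $(w-k)_+\phi^p$ and using \eqref{eq:pcoercive}, together with the fractional Sobolev inequality. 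The part needing care is bounding the nonlocal term by $\Tail$ times $|\{w>k\}|$.
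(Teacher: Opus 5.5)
Your proposal is correct and follows the paper's proof: apply \cite[Theorem~1.1]{DiCastroKuusiPalatucci2016} with $\delta=1$ to $\pm(v-c)$ (the paper says ``to $v$ at level $c$''), then split $\Tail_{p-1,sp}(\cdot\,;x_0,r/2)^{p-1}$ into the annulus $B_r\setminus B_{r/2}$ plus $2^{-sp}\Tail_{p-1,sp}(\cdot\,;x_0,r)^{p-1}$, and bound the annulus term with H\"older. One small correction: for $c\neq0$ the function $w=v-c$ is not in $W^{s,p}(\R^n)$, since constants are not in $L^p(\R^n)$, so justify admissibility by writing $(w-k)_+\phi^p=(v-c-k)_+\phi^p$ with $v\in W^{s,p}(\R^n)$ — this is exactly why the paper phrases the step as running the Di Castro--Kuusi--Palatucci argument on $v$ with an arbitrary truncation level.
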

	
	\begin{proof}
		We set $
		m:=p-1$, $
		z_+:=(v-c)_+$ and $ z_-:=(c-v)_+$.
		The proof of \cite[Theorem~1.1]{DiCastroKuusiPalatucci2016} allows an
        arbitrary truncation level. Applying that estimate to $v$ at level $c$ gives
		\begin{equation*}
			\|z_+\|_{L^\infty(B_{r/2}(x_0))}
			\le \Tail_{p-1,sp}(z_+;x_0,r/2)
			+C\left(\fint_{B_r(x_0)}z_+^p\,\diff x\right)^{1/p}.
		\end{equation*}
		The same estimate applied to $-v$ at level $-c$ gives the analogous bound for $z_-$. Thus, 	for either choice $z=z_+$ or $z=z_-$, we have
		\begin{align*}
			&\Tail_{p-1,sp}(z;x_0,r/2)^m\\
			&\quad =\left(\frac r2\right)^{sp}
			\int_{B_r(x_0)\setminus B_{r/2}(x_0)}
			\frac{|z(y)|^m}{|y-x_0|^{n+sp}}\,\diff y+
			\left(\frac r2\right)^{sp}
			\int_{\R^n\setminus B_r(x_0)}
			\frac{|z(y)|^m}{|y-x_0|^{n+sp}}\,\diff y\\
			&\quad \le C r^{-n}\int_{B_r(x_0)}|v-c|^m\,\diff y
			+2^{-sp}\Tail_{p-1,sp}(v-c;x_0,r)^m\\
			&\quad \le C\left(\fint_{B_r(x_0)}|v-c|^p\,\diff y\right)^{m/p}
			+2^{-sp}\Tail_{p-1,sp}(v-c;x_0,r)^m.
		\end{align*}
		The estimate \eqref{eq:local-boundedness} follows because
		\[
		\|v-c\|_{L^\infty(B_{r/2}(x_0))}
		=\max\left\{
		\|z_+\|_{L^\infty(B_{r/2}(x_0))},
		\|z_-\|_{L^\infty(B_{r/2}(x_0))}
		\right\}.
		\]
	\end{proof}
	
	The condition $sp>p-1$ implies $p(1-s)<1<2$, so the range in Lemma~\ref{lem:fixed-radius-homogeneous-C1a} is satisfied. Scaling that result gives the following interior estimate.
	
	\begin{lemma}
		\label{lem:homogeneous-C1a}
		Assume $p>2$ and $sp>p-1$. Suppose that
		$
		v\in
		W_{\mathrm{loc}}^{s,p}(B_r(x_0))
		\cap
		L_{sp}^{p-1}(\R^n)
		\cap
		L^\infty(B_r(x_0))$
		is a local weak solution of $(-\Delta_p)^s v=0$ in $B_r(x_0)$. We define
		\[
		\mathcal H(v;x_0,r)
		:=
		\frac1r
		\essosc_{B_r(x_0)}v
		+
		\frac1r
		\Tail_{p-1,sp}
		\bigl(
		v-(v)_{B_r(x_0)};x_0,r
		\bigr).
		\]
		Then there exist \(\alpha\in(0,1)\) and \(C_h\ge1\),
		depending only on \(n,p,s\), such that \(v\) has a representative in
		\(C^{1,\alpha}(B_{r/2}(x_0))\) and
		\begin{equation}\label{eq:homogeneous-full-C1a}
			\|\nabla v\|_{L^\infty(B_{r/2}(x_0))}
			+
			r^\alpha
			[\nabla v]_{C^{0,\alpha}(B_{r/2}(x_0))}
			\le
			C_h\mathcal H(v;x_0,r).
		\end{equation}
	\end{lemma}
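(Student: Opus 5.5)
The plan is to reduce to Lemma~\ref{lem:fixed-radius-homogeneous-C1a} by translation, dilation and subtraction of a constant, and then rescale the resulting $C^{1,\alpha}$ bound. The equation $(-\Delta_p)^s$ is invariant under translations and under adding constants, and it is homogeneous under dilations: if $v$ solves $(-\Delta_p)^sv=0$ in $B_r(x_0)$, then
\[
U(z):=\frac{v(x_0+\tfrac r2 z)-c}{r}
\]
solves $(-\Delta_p)^sU=0$ in $B_2$. This follows by changing variables in the weak formulation, with test functions $\varphi(x_0+\tfrac r2\cdot)$. The membership $U\in W^{s,p}_{\mathrm{loc}}(B_2)\cap L^{p-1}_{sp}(\R^n)$ is inherited from $v$. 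We take $c:=(v)_{B_r(x_0)}$.

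The condition $sp>p-1$ gives $p(1-s)<1$, hence $p<2/(1-s)$, so Lemma~\ref{lem:fixed-radius-homogeneous-C1a} applies. It yields $U\in C^{1,\alpha}(B_1)$ together with
\[
\|\nabla U\|_{L^\infty(B_1)}+[\nabla U]_{C^{0,\alpha}(B_1)}\le C\Bigl[\|U\|_{L^\infty(B_2)}+\Bigl(\int_{\R^n\setminus B_2}\frac{|U(z)|^{p-1}}{|z|^{n+sp}}\,\diff z\Bigr)^{1/(p-1)}\Bigr].
\]
Undoing the scaling, $\nabla U(z)=\tfrac12\nabla v(x_0+\tfrac r2 z)$ and $[\nabla U]_{C^{0,\alpha}(B_1)}=\tfrac12(r/2)^\alpha[\nabla v]_{C^{0,\alpha}(B_{r/2}(x_0))}$. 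The left-hand side is therefore comparable to $\|\nabla v\|_{L^\infty(B_{r/2})}+r^\alpha[\nabla v]_{C^{0,\alpha}(B_{r/2})}$, with constants depending only on $\alpha$. This gives the $C^{1,\alpha}$ representative on $B_{r/2}(x_0)$.

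It remains to bound the right-hand side by $C\mathcal H(v;x_0,r)$. For the first term, note that $|v-(v)_{B_r}|\le\essosc_{B_r}v$ a.e.\ in $B_r$, so
\[
\|U\|_{L^\infty(B_2)}\le \frac1r\essosc_{B_r(x_0)}v .
\]
For the tail, substitute $y=x_0+\tfrac r2 z$. This gives
\[
\int_{\R^n\setminus B_2}\frac{|U|^{p-1}}{|z|^{n+sp}}\,\diff z
= r^{-(p-1)}(r/2)^{sp}\int_{\R^n\setminus B_r(x_0)}\frac{|v-c|^{p-1}}{|y-x_0|^{n+sp}}\,\diff y
=2^{-sp}r^{-(p-1)}\Tail_{p-1,sp}(v-c;x_0,r)^{p-1}.
\]
Taking the $(p-1)$-th root, this term is bounded by $\frac1r\Tail_{p-1,sp}(v-(v)_{B_r};x_0,r)$.

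The only delicate point, which is routine, is checking that the notion of local weak solution in Definition~\ref{def:local-weak} matches the hypothesis of \cite[Theorem~1.1]{GiovagnoliJesusSilvestre2025}. It is also worth noting that $v\in L^\infty(B_r(x_0))$ ensures the oscillation term is finite. Collecting the two bounds gives \eqref{eq:homogeneous-full-C1a} with $C_h=C_h(n,p,s)$.
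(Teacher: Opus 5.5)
Your proposal is correct and matches the paper's proof essentially step for step. The paper uses the same normalization $U(z)=\bigl(v(x_0+\tfrac r2 z)-(v)_{B_r(x_0)}\bigr)/r$, checks $p<2/(1-s)$ to invoke Lemma~\ref{lem:fixed-radius-homogeneous-C1a}, bounds $\|U\|_{L^\infty(B_2)}$ by the oscillation and the tail term by $2^{-sp/(p-1)}\frac1r\Tail_{p-1,sp}(v-c;x_0,r)$, and undoes the scaling with the factors $\tfrac12$ and $r^\alpha 2^{-1-\alpha}$.
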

	\begin{proof}
		We define
		\[
		c:=(v)_{B_r(x_0)},
		\qquad
		r_0:=\frac r2,
		\qquad
		U(z):=\frac{v(x_0+r_0z)-c}{r}.
		\]
		Scaling shows that $U$ is a local weak solution of
		\[
		(-\Delta_p)^sU=0
		\qquad\text{in }B_2.
		\]
		Moreover, we have
		\[
		\|U\|_{L^\infty(B_2)}
		=\frac1r\|v-c\|_{L^\infty(B_r(x_0))}
		\le\frac1r\essosc_{B_r(x_0)}v.
		\]
		Since
		$\R^n\setminus B_2$ is mapped onto
		$\R^n\setminus B_r(x_0)$ by $y=x_0+r_0z$,  we have
		\begin{align*}
			\left(
			\int_{\R^n\setminus B_2}
			\frac{|U(z)|^{p-1}}{|z|^{n+sp}}\,\diff z
			\right)^{1/(p-1)}
			&=
			\frac{r_0^{sp/(p-1)}}r
			\left(
			\int_{\R^n\setminus B_r(x_0)}
			\frac{|v(y)-c|^{p-1}}{|y-x_0|^{n+sp}}\,\diff y
			\right)^{1/(p-1)}\\
			&=
			2^{-sp/(p-1)}\frac1r
			\Tail_{p-1,sp}(v-c;x_0,r).
		\end{align*}
		Applying Lemma~\ref{lem:fixed-radius-homogeneous-C1a}, we obtain
		\[
		\|\nabla U\|_{L^\infty(B_1)}
		+[\nabla U]_{C^{0,\alpha}(B_1)}
		\le C\mathcal H(v;x_0,r).
		\]
		Rescaling gives \eqref{eq:homogeneous-full-C1a}, since
		\[
		\|\nabla U\|_{L^\infty(B_1)}
		=\frac12
		\|\nabla v\|_{L^\infty(B_{r/2}(x_0))},\quad \text{and} \quad
		[\nabla U]_{C^{0,\alpha}(B_1)}
		=\frac{r^\alpha}{2^{1+\alpha}}
		[\nabla v]_{C^{0,\alpha}(B_{r/2}(x_0))}.\]
	\end{proof}
	
	\subsection{Exact affine subtraction}
	
	Subtracting an affine function leads to a linear equation for the
    remainder. The next lemma gives its coefficient through the
    fundamental theorem of calculus.
	
	\begin{lemma}
		\label{lem:affine-linearization}
		Assume $p>2$ and $sp>p-1$. Let $v\in W_{\mathrm{loc}}^{s,p}(B)
		\cap L_{sp}^{p-1}(\R^n)$ be a local weak solution of $(-\Delta_p)^s v=0$ in $B$.  For
		\[
		\ell(x):=b+a\cdot x,
		\qquad
		w:=v-\ell,
		\]
		one has
		\[
		w\in W_{\mathrm{loc}}^{s,p}(B)
		\cap L_{sp}^{p-1}(\R^n),
		\]
		and, for every $\varphi\in C_c^\infty(B)$,
		\begin{equation}\label{eq:affine-linearization}
			\iint_{\R^n\times\R^n}
			\frac{\mathbb A_{a,w}(x,y)\,\delta w(x,y)\,\delta\varphi(x,y)}
			{|x-y|^{n+sp}}\,\diff x\diff y=0,
		\end{equation}
		where
		\begin{equation*}
			\mathbb A_{a,w}(x,y)
			:=(p-1)\int_0^1
			|a\cdot(x-y)+t\delta w(x,y)|^{p-2}\,\diff t.
		\end{equation*}
		The integral in \eqref{eq:affine-linearization} is absolutely convergent and
		\[
		\mathbb A_{a,w}(x,y)=\mathbb A_{a,w}(y,x)\ge0.
		\]
	\end{lemma}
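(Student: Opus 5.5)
The plan has three parts: establish the function-space memberships of $w$, derive the linearized identity by the fundamental theorem of calculus applied to $J_p$, and check absolute convergence of the resulting integral. The main obstacle is the last part, namely integrability near the diagonal and at infinity when one of the variables lies outside $B$.

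\emph{Membership of $w$.} Since $\ell$ is smooth, $[\ell]_{W^{s,p}(B')}^p\le |a|^p\iint_{B'\times B'}|x-y|^{p-n-sp}\,\diff x\diff y<\infty$ for every $B'\Subset B$, because $p-sp>0$. Hence $w\in W^{s,p}_{\mathrm{loc}}(B)$. For the tail class, $|\ell(y)|^{p-1}\le C(1+|y|)^{p-1}$, and $\int(1+|y|)^{p-1-n-sp}\,\diff y<\infty$ precisely because $sp>p-1$. So $\ell\in L^{p-1}_{sp}(\R^n)$, and therefore $w\in L^{p-1}_{sp}(\R^n)$.

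\emph{The identity.} Fix $(x,y)$ with $x\ne y$. Then $\delta v(x,y)=a\cdot(x-y)+\delta w(x,y)$ and $\delta\ell(x,y)=a\cdot(x-y)$. Since $p>2$, $J_p\in C^1$ with $J_p'(\tau)=(p-1)|\tau|^{p-2}$. The fundamental theorem of calculus along $t\mapsto a\cdot(x-y)+t\,\delta w$ gives
\[
J_p(\delta v)-J_p(a\cdot(x-y))=\mathbb A_{a,w}(x,y)\,\delta w(x,y).
\]
Symmetry of $\mathbb A_{a,w}$ follows by substituting $(x,y)\to(y,x)$: both $a\cdot(x-y)$ and $\delta w$ change sign, and the integrand depends only on the absolute value. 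Nonnegativity is obvious.

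It therefore suffices to show
\[
\iint\frac{J_p(a\cdot(x-y))\,\delta\varphi(x,y)}{|x-y|^{n+sp}}\,\diff x\diff y=0
\qquad\text{for }\varphi\in C_c^\infty(B),
\]
with absolute convergence; subtracting this from the equation for $v$ then yields \eqref{eq:affine-linearization}. To see it, note that $|J_p(a\cdot z)|\,|\delta\varphi|\le C|a|^{p-1}|z|^{p-1}\min\{|z|,1\}\,\|\varphi\|_{C^1}$. Near the diagonal this gives the integrand $|z|^{p-n-sp}$, which is integrable since $p-sp<1<2$. Away from the diagonal it gives $|z|^{p-1-n-sp}$, integrable since $sp>p-1$. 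Moreover, $\varphi$ has compact support, so the integral over $x$ or $y$ is effectively over a bounded set. Once absolute convergence is known, split $\delta\varphi=\varphi(x)-\varphi(y)$, which gives $2\int\varphi(x)\int J_p(a\cdot z)|z|^{-n-sp}\,\diff z\,\diff x$ in principal-value form. The inner integral vanishes by oddness $z\mapsto -z$ after truncation $\{\varepsilon<|z|<1/\varepsilon\}$. Pass to the limit by dominated convergence, using the bounds above; the symmetric splitting is justified on the truncated domain.

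\emph{Absolute convergence of \eqref{eq:affine-linearization}.} Absolute convergence of the original $v$-integral is part of the definition of local weak solution, since $v\in W^{s,p}_{\mathrm{loc}}\cap L^{p-1}_{sp}$ and the standard Hölder/tail bounds apply. The affine integral was handled above. So the difference, which equals \eqref{eq:affine-linearization}, is absolutely convergent. For completeness one can also check directly that $\mathbb A_{a,w}|\delta w|\le C(|a|^{p-2}|x-y|^{p-2}|\delta w|+|\delta w|^{p-1})$, which is controlled by the Gagliardo seminorm near the support and by the tails of $w$ and of $|a|^{p-1}|y|^{p-1}$ far away.
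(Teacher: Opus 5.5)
Your proposal is correct and follows essentially the same route as the paper: membership of $\ell$ in $W^{s,p}_{\mathrm{loc}}\cap L^{p-1}_{sp}$ via $p>sp$ and $sp>p-1$, vanishing of the affine pairing by truncation, oddness of $h\mapsto J_p(a\cdot h)$ and dominated convergence, the fundamental-theorem-of-calculus identity $J_p(\delta v)-J_p(\delta\ell)=\mathbb A_{a,w}\,\delta w$, and absolute convergence from $|\mathbb A_{a,w}\,\delta w\,\delta\varphi|\le(|J_p(\delta v)|+|J_p(\delta\ell)|)|\delta\varphi|$. One small slip: $|z|^{p-n-sp}$ is integrable near the diagonal because $p-sp>0$ (that is, $s<1$), which is the condition you correctly used for $[\ell]_{W^{s,p}}$; the reason you gave there, $p-sp<1$, is not the relevant one.
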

	\begin{proof}
		Let $U\Subset\R^n$ be bounded. Since $p-sp-1=-\beta$, we have
		\begin{align*}
			[\ell]_{W^{s,p}(U)}^p
			=\iint_{U\times U}
			\frac{|a\cdot(x-y)|^p}{|x-y|^{n+sp}}\,\diff x\diff y
			\le C|U|\,|a|^p
			\int_0^{\operatorname{diam}U}r^{p-sp-1}\,\diff r<\infty,
		\end{align*}
		while for the weighted $L^{p-1}$-norm we estimate
		\begin{align*}
			&\int_{\R^n}
			\frac{|\ell(y)|^{p-1}}{(1+|y|)^{n+sp}}\,\diff y\\
			&\quad\le C\int_{B_1}|\ell(y)|^{p-1}\,\diff y
			+C|b|^{p-1}\int_1^\infty r^{-1-sp}\,\diff r
			+C|a|^{p-1}\int_1^\infty r^{p-sp-2}\,\diff r\\
			&\quad=C\int_{B_1}|\ell(y)|^{p-1}\,\diff y
			+\frac{C|b|^{p-1}}{sp}
			+C|a|^{p-1}\int_1^\infty r^{-1-\beta}\,\diff r<\infty,
		\end{align*}
		so that
		\[
		\ell\in W_{\mathrm{loc}}^{s,p}(\R^n)
		\cap L_{sp}^{p-1}(\R^n),
		\qquad
		w\in W_{\mathrm{loc}}^{s,p}(B)
		\cap L_{sp}^{p-1}(\R^n).
		\]
		
		Fix $\varphi\in C_c^\infty(B)$, set $K:=\supp\varphi$, and choose $U$ with $K\Subset U\Subset B$.  Let $d:=\dist(K,\R^n\setminus U)>0$. H\"older's inequality gives
		\begin{align*}
			\iint_{U\times U}
			\frac{|\delta v(x,y)|^{p-1}|\delta\varphi(x,y)|}
			{|x-y|^{n+sp}}\,\diff x\diff y
			\le
			[v]_{W^{s,p}(U)}^{p-1}
			[\varphi]_{W^{s,p}(U)}<\infty,
		\end{align*}
		whereas for $x\in K$ and $y\in\R^n\setminus U$ we have
		\[
		|x-y|^{-n-sp}
		\le C_{K,U}(1+|y|)^{-n-sp},
		\]
		and consequently
		\begin{align*}
			\int_K\int_{\R^n\setminus U}
			\frac{|\delta v(x,y)|^{p-1}|\varphi(x)|}
			{|x-y|^{n+sp}}\,\diff y\diff x
			\le C_{\varphi,K,U}
			\left[
			\|v\|_{L^{p-1}(K)}^{p-1}
			+\int_{\R^n}
			\frac{|v(y)|^{p-1}}{(1+|y|)^{n+sp}}\,\diff y
			\right]<\infty.
		\end{align*}
		Since $\delta\varphi=0$ on $(\R^n\setminus K)\times(\R^n\setminus K)$, we conclude that
		\begin{equation}\label{eq:v-weak-form-absolute}
			\iint_{\R^n\times\R^n}
			\frac{|J_p(\delta v)\delta\varphi|}
			{|x-y|^{n+sp}}\,\diff x\diff y<\infty.
		\end{equation}
		
		Now consider the affine part. The estimates
		\[
		|J_p(\delta\ell)|
		\le |a|^{p-1}|x-y|^{p-1},
		\qquad
		|\delta\varphi|
		\le\min\left\{
		\|\nabla\varphi\|_{L^\infty}|x-y|,
		2\|\varphi\|_{L^\infty}
		\right\}
		\]
		lead  to
		\begin{align*}
			&\iint_{\R^n\times\R^n}
			\frac{|J_p(\delta\ell)\delta\varphi|}
			{|x-y|^{n+sp}}\,\diff x\diff y\\
			&\quad\le C|K|\,|a|^{p-1}
			\left[
			\|\nabla\varphi\|_{L^\infty}
			\int_0^1r^{p-sp-1}\,\diff r
			+\|\varphi\|_{L^\infty}
			\int_1^\infty r^{p-sp-2}\,\diff r
			\right]<\infty.
		\end{align*}
		
		For $\varepsilon>0$,  we define
		\[
		\mathcal E_\varepsilon(\ell,\varphi)
		:=\iint_{\{|x-y|>\varepsilon\}}
		\frac{J_p(a\cdot(x-y))\delta\varphi(x,y)}
		{|x-y|^{n+sp}}\,\diff x\diff y.
		\]
		For each fixed $\varepsilon>0$, the inner integral is absolutely convergent, since
		\[
		\int_{|h|>\varepsilon}
		\frac{|J_p(-a\cdot h)|}{|h|^{n+sp}}\,\diff h
		\le C|a|^{p-1}\int_\varepsilon^\infty r^{-1-\beta}\,\diff r
		<\infty.
		\]
		Interchanging $x$ and $y$ in the term containing $\varphi(y)$ gives
		\begin{align*}
			\mathcal E_\varepsilon(\ell,\varphi)
			&=2\int_{\R^n}\varphi(x)
			\int_{\R^n\setminus B_\varepsilon(x)}
			\frac{J_p(a\cdot(x-y))}{|x-y|^{n+sp}}\,\diff y\diff x\\
			&=2\int_{\R^n}\varphi(x)
			\int_{\{|h|>\varepsilon\}}
			\frac{J_p(-a\cdot h)}{|h|^{n+sp}}\,\diff h\diff x=0,
		\end{align*}
		so that dominated convergence yields
		\begin{equation}\label{eq:affine-weak-harmonic}
			\iint_{\R^n\times\R^n}
			\frac{J_p(\delta\ell)\delta\varphi}
			{|x-y|^{n+sp}}\,\diff x\diff y
			=\lim_{\varepsilon\to 0}
			\mathcal E_\varepsilon(\ell,\varphi)=0.
		\end{equation}
		Subtracting \eqref{eq:affine-weak-harmonic} from the weak formulation for $v$, we obtain
		\begin{equation}\label{eq:subtracted-affine-equations}
			\iint_{\R^n\times\R^n}
			\frac{[J_p(\delta v)-J_p(\delta\ell)]\delta\varphi}
			{|x-y|^{n+sp}}\,\diff x\diff y=0.
		\end{equation}
		
		Finally, for $\eta,\zeta\in\R$, we have the elementary identity
		\[
		J_p(\eta+\zeta)-J_p(\eta)
		=(p-1)\zeta\int_0^1|\eta+t\zeta|^{p-2}\,\diff t.
		\]
		Taking
		\[
		\eta:=\delta\ell(x,y)=a\cdot(x-y),
		\qquad
		\zeta:=\delta w(x,y),
		\]
		we obtain
		\[
		J_p(\delta v)-J_p(\delta\ell)
		=\mathbb A_{a,w}(x,y)\delta w(x,y).
		\]
		From \eqref{eq:v-weak-form-absolute} and the absolute convergence of the affine form, it follows that
		\begin{align*}
			\iint_{\R^n\times\R^n}
			\frac{|\mathbb A_{a,w}(x,y)\delta w(x,y)
				\delta\varphi(x,y)|}{|x-y|^{n+sp}}\,\diff x\diff y
			\le
			\iint_{\R^n\times\R^n}
			\frac{(|J_p(\delta v)|+|J_p(\delta\ell)|)|\delta\varphi|}
			{|x-y|^{n+sp}}\,\diff x\diff y<\infty,
		\end{align*}
		so that \eqref{eq:affine-linearization} follows from \eqref{eq:subtracted-affine-equations}. Finally, since
		\[
		\mathbb A_{a,w}(y,x)
		=(p-1)\int_0^1
		|-a\cdot(x-y)-t\delta w(x,y)|^{p-2}\,\diff t,
		\]
		we have $\mathbb A_{a,w}(y,x)=\mathbb A_{a,w}(x,y)\ge0$.
	\end{proof}
	
	\subsection{Comparison in \texorpdfstring{$L^{p-1}$}{L(p-1)}}
	The next estimate uses only monotonicity and the shared exterior values. 
	\begin{lemma}\label{lem:mean-measure-comparison}
		Assume \eqref{eq:gradient-parameter-range}. Let
		$u\in W^{s,p}(\R^n)$ solve $(-\Delta_p)^su=\mu$ in $B_r(x_0)$,
		where $\mu$ is a finite signed Radon measure. Let $v$ be its
		fractional $p$-harmonic replacement in that ball and set $w=u-v$.
		Then
		\begin{equation}\label{eq:mean-measure-comparison}
			\frac1r\left(\fint_{B_r(x_0)}|w|^m\,\diff x\right)^{1/m}
			\le CD_\mu(x_0,r),\qquad m=p-1.
		\end{equation}
		Moreover, for $0<\rho\le r$,
		\begin{equation}\label{eq:mean-measure-comparison-small}
			\frac1\rho\left(\fint_{B_\rho(x_0)}|w|^m\,\diff x\right)^{1/m}
			+\frac1\rho\Tail_{m,sp}(w;x_0,\rho)
			\le C\left(\frac r\rho\right)^{1+n/m}D_\mu(x_0,r).
		\end{equation}
		The constants depend only on $n,p,s$.
	\end{lemma}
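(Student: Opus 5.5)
The plan is to reduce to $x_0=0$, $r=1$ by the scaling described in the introduction, and then prove \eqref{eq:mean-measure-comparison} by a truncation (Boccardo--Gallou\"et type) argument. Both sides of \eqref{eq:mean-measure-comparison} transform consistently: with $\widetilde u(z)=u(x_0+rz)/r$, $\widetilde v(z)=v(x_0+rz)/r$ we get $\widetilde w=w(x_0+r\cdot)/r$, and $D_{\widetilde\mu}(0,1)=D_\mu(x_0,r)$. So it suffices to show
\[
\int_{B_1}|w|^m\,\diff x\le C\,|\mu|(B_1).
\]
Since $u$ and $v$ agree outside $B_1$, we have $w\in W^{s,p}_0(B_1)$, using the identification $W^{s,p}_0(B)=X^{s,p}_0(B)$ for balls. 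Hence $T_k(w):=\max\{-k,\min\{k,w\}\}$ lies in $W^{s,p}_0(B_1)\cap L^\infty$.

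\emph{Step 1 (truncated energy).} I test both equations with $T_k(w)$ and subtract. The replacement $v$ contributes zero by \eqref{eq:harmonic-replacement}, so the difference of the two forms equals $\int T_k(w)\,\diff\mu\le k|\mu|(B_1)$. Pairing a general measure with $T_k(w)$ requires some justification. In the application $\mu=\mu_j$ is smooth, so this is immediate; in general I would use density, approximating $T_k(w)$ by mollifications in $W^{s,p}_0(B_1)$ with uniform sup-norm bounds. This is the one place where I expect to need care, and I would first check it in the smooth case actually used later.

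For the left-hand side I use that $T_k$ is monotone and $1$-Lipschitz. Thus $\delta T_k(w)$ has the same sign as $\delta w$ and $|\delta T_k(w)|\le|\delta w|$. Lemma~\ref{lem:algebra} with $a=\delta u$, $b=\delta v$, together with $|\delta w|\le|\delta u|+|\delta v|$, then gives
\[
\bigl(J_p(\delta u)-J_p(\delta v)\bigr)\delta T_k(w)\ge c\,|\delta w|^{p-1}|\delta T_k(w)|\ge c\,|\delta T_k(w)|^p .
\]
Hence
\[
[T_k(w)]_{W^{s,p}(\R^n)}^p\le Ck\,|\mu|(B_1).
\]

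\emph{Step 2 (level sets and layer cake).} I fix $q>p$ and use a Sobolev--Poincar\'e inequality for $W^{s,p}_0(B_1)$ into $L^q(B_1)$. If $sp<n$, take $q=np/(n-sp)$. If $sp\ge n$, any finite $q$ works, via embedding from $W^{s',p}$ with $s'<s$ and $s'p<n$ on the ball. Since $|T_k(w)|=k$ on $\{|w|>k\}$, Chebyshev gives
\[
k^q\,|\{|w|>k\}|\le C\bigl(k|\mu|(B_1)\bigr)^{q/p},
\qquad\text{so}\qquad
|\{|w|>k\}|\le C|\mu|(B_1)^{q/p}k^{-q(p-1)/p}.
\]
Because $q>p$, the decay exponent $q(p-1)/p$ exceeds $m$. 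For any $k_0>0$ the layer-cake formula then yields
\[
\int_{B_1}|w|^m\le |B_1|\,k_0^m+C|\mu|(B_1)^{q/p}\int_{k_0}^\infty k^{m-1-q m/p}\,\diff k
= |B_1|\,k_0^m+C|\mu|(B_1)^{q/p}\,k_0^{m(1-q/p)} .
\]
Choosing $k_0=|\mu|(B_1)^{1/m}$ makes both terms of order $|\mu|(B_1)$, which gives \eqref{eq:mean-measure-comparison}. If $\mu(B_1)=0$, Step 1 gives $w=0$ directly.

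\emph{Step 3 (smaller radii).} For $0<\rho\le r$, the average term satisfies
\[
\fint_{B_\rho}|w|^m\le (r/\rho)^n\fint_{B_r}|w|^m .
\]
Since $w=0$ outside $B_r$, the tail only sees $B_r\setminus B_\rho$, where $|y-x_0|^{-n-sp}\le\rho^{-n-sp}$. Hence
\[
\Tail_{m,sp}(w;x_0,\rho)^m\le \rho^{-n}\int_{B_r}|w|^m\le C(r/\rho)^n\fint_{B_r}|w|^m .
\]
Taking $m$-th roots, dividing by $\rho=r\cdot(\rho/r)$, and inserting \eqref{eq:mean-measure-comparison} gives the factor $(r/\rho)^{1+n/m}$ in \eqref{eq:mean-measure-comparison-small}.
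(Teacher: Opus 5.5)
Your argument is correct, but it takes a different route from the paper for \eqref{eq:mean-measure-comparison}. You run the classical Boccardo--Gallou\"et scheme: first the truncated energy bound $[T_k(w)]_{W^{s,p}(\R^n)}^p\le Ck|\mu|(B_1)$, then a fractional Sobolev embedding into some $L^q$ with $q>p$ (with a case split $sp<n$ versus $sp\ge n$), then Chebyshev and a layer-cake integration.

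The paper uses no embedding at all. It tests with $S_k(w)=T_k(w)/k$, so the right-hand side is bounded by $|\mu|(B_r)$. Since the integrand $[J_p(\delta u)-J_p(\delta v)]\,\delta S_k(w)$ is nonnegative, it then discards everything except the pairs $x\in B_r$, $y\notin B_r$. There $\delta w(x,y)=w(x)$, and the nonlocal term satisfies $\int_{\R^n\setminus B_r}|x-y|^{-n-sp}\,\diff y\ge cr^{-sp}$. This gives $r^{-sp}\int_{B_r}|w|^m\min\{|w|/k,1\}\,\diff x\le C|\mu|(B_r)$, and letting $k\to0$ yields $\int_{B_r}|w|^m\le Cr^{sp}|\mu|(B_r)$ directly.

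The paper's route is shorter, needs no distinction between $sp<n$ and $sp\ge n$, and exploits the nonlocal interaction with the exterior. It is, however, tied to the exponent $m=p-1$. Your route is longer and relies on embedding theorems, but it also works for local operators and gives more: the Marcinkiewicz bound $|\{|w|>k\}|\le C|\mu|(B_1)^{q/p}k^{-qm/p}$, hence $L^\gamma$ comparison estimates for every $\gamma<qm/p$, beyond $m$.

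Two further points:
\begin{itemize}
\item Your justification of testing a general measure against the truncation matches the paper's. You use smooth approximants with uniform sup bounds, and only the bound on $\int\varphi_j\,\diff\mu$ is needed, not its limit. Your Step~3 is the same as the paper's.
\item One small slip: the degenerate case should read $|\mu|(B_1)=0$, not $\mu(B_1)=0$.
\end{itemize}
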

	\begin{proof}
		Write $B_r=B_r(x_0)$ and $M=|\mu|(B_r)$. For $k>0$, set
		\[
		T_k(t)=\max\{-k,\min\{t,k\}\},\qquad S_k(t)=\frac{T_k(t)}k.
		\]
		Since \(w=u-v\in W^{s,p}_0(B_r)\), we have
		\[
		S_k(w)\in W^{s,p}_0(B_r),
		\qquad
		|S_k(w)|\le1.
		\]
		
		We first approximate $S_k(w)$ by smooth functions while preserving
        its bound. Set $g=S_k(w)$ and, for $0<\lambda<1$, define
        \[
        g_\lambda(x)=g\left(x_0+\frac{x-x_0}{\lambda}\right).
        \]
        Then $g_\lambda=0$ outside $B_{\lambda r}(x_0)$,
        $|g_\lambda|\le1$, and $g_\lambda\to g$ in
        $W^{s,p}(\R^n)$ as $\lambda\to1$.
        To see the last convergence, approximate $g$ in the global
        $W^{s,p}$ norm by smooth compactly supported functions and use
        the continuity and uniform boundedness of these dilations for
        $\lambda$ near one. Convolve $g_\lambda$ with a nonnegative
        mollifier whose support radius is less than $(1-\lambda)r/2$.
        Choosing the convolution radii along a sequence
        $\lambda_j\to1$ gives
        \[
        \varphi_j\in C_c^\infty(B_r),\qquad
        |\varphi_j|\le1,\qquad
        \varphi_j\to S_k(w)\quad\text{in }W^{s,p}(\R^n).
        \]
        Subtracting the weak formulations for \(u\) and \(v\) and using
		\(\varphi_j\) as a test function gives
		\begin{equation}\label{eq:bounded-test-approximation}
		\iint_{\R^n\times\R^n}
		\frac{
			[J_p(\delta u)-J_p(\delta v)]
			\delta\varphi_j
		}{|x-y|^{n+sp}}
		\,\diff x\diff y
		=
		\int_{B_r}\varphi_j\,\diff\mu .
		\end{equation}
		Since \(|\varphi_j|\le1\),
		\[
		\left|
		\int_{B_r}\varphi_j\,\diff\mu
		\right|
		\le M.
		\]
		Passing to the limit in the left hand side by H\"older's
		inequality and \eqref{eq:bounded-test-approximation}, we obtain
		\begin{equation}\label{eq:bounded-comparison-test}
			\iint_{\R^n\times\R^n}
			\frac{
				[J_p(\delta u)-J_p(\delta v)]
				\delta S_k(w)
			}{|x-y|^{n+sp}}
			\,\diff x\diff y
			\le M.
		\end{equation}
		
        The integrand in \eqref{eq:bounded-comparison-test} is
		nonnegative.  Indeed,
		\[
		[J_p(a)-J_p(b)](a-b)\ge0
		\]
		for all \(a,b\in\R\), while \(S_k\) is nondecreasing.
		Hence we may retain only the pairs
		\[
		x\in B_r,\qquad y\in\R^n\setminus B_r.
		\]
		For such pairs,
		\[
		w(y)=0,
		\qquad
		\delta u(x,y)-\delta v(x,y)=w(x),
		\qquad
		\delta S_k(w)(x,y)=S_k(w(x)).
		\]
		Moreover, \eqref{eq:pcoercive} implies
		\[
		[J_p(a)-J_p(b)]
		\operatorname{sign}(a-b)
		\ge c|a-b|^{p-1}.
		\]
		Therefore,
		\[
		\begin{aligned}
			\relax[J_p(\delta u(x,y))-J_p(\delta v(x,y))]
			S_k(w(x))\ge
			c|w(x)|^m
			\min\left\{\frac{|w(x)|}{k},1\right\}.
		\end{aligned}
		\]
		On the other hand, uniformly for \(x\in B_r\),
		\[
		\begin{aligned}
			\int_{\R^n\setminus B_r}
			\frac{\diff y}{|x-y|^{n+sp}}
\ge
			\int_{B_{3r}(x_0)\setminus B_{2r}(x_0)}
			\frac{\diff y}{|x-y|^{n+sp}}\ge cr^{-sp}.
		\end{aligned}
		\]
		Consequently, \eqref{eq:bounded-comparison-test} gives
		\[
		r^{-sp}
		\int_{B_r}
		|w|^m
		\min\left\{\frac{|w|}{k},1\right\}
		\,\diff x
		\le CM.
		\]
		Letting \(k\to0\) and applying the monotone convergence
		theorem, we obtain
		\begin{equation}\label{eq:comparison-exterior-mass}
			\int_{B_r}|w|^m\,\diff x
			\le
			Cr^{sp}|\mu|(B_r).
		\end{equation}
		
		Since \(sp=m+\beta\), it follows that
		\[
		\begin{aligned}
			\frac1r
			\left(
			\fint_{B_r}|w|^m\,\diff x
			\right)^{1/m}
			\le
			C
			\left(
			\frac{|\mu|(B_r)}
			{r^{n-\beta}}
			\right)^{1/m}=
			CD_\mu(x_0,r),
		\end{aligned}
		\]
		which proves \eqref{eq:mean-measure-comparison}.

		Finally, let \(0<\rho\le r\). Since \(B_\rho(x_0)\subset B_r(x_0)\)
		and \(w=0\) a.e. in \(\R^n\setminus B_r(x_0)\), we have
		\[
		\begin{aligned}
			\left(\fint_{B_\rho(x_0)}|w|^m\,\diff x\right)^{1/m}
			+\Tail_{m,sp}(w;x_0,\rho)\le
			C\rho^{-n/m}
			\left(\int_{B_r(x_0)}|w|^m\,\diff x\right)^{1/m}.
		\end{aligned}
		\]
		Hence, by \eqref{eq:comparison-exterior-mass} and
		\(sp=m+\beta\),
		\[
		\begin{aligned}
			\frac1\rho
			\left(\fint_{B_\rho(x_0)}|w|^m\,\diff x\right)^{1/m}
			+\frac1\rho\Tail_{m,sp}(w;x_0,\rho)&\le
			C\rho^{-1-n/m}r^{sp/m}|\mu|(B_r)^{1/m}\\
			&=
			C\left(\frac r\rho\right)^{1+n/m}
			D_\mu(x_0,r),
		\end{aligned}
		\]
		which proves \eqref{eq:mean-measure-comparison-small}.
	\end{proof}

	\section{Homogeneous affine approximation}\label{sec:affine-excess}
	Throughout this section, we assume \eqref{eq:gradient-parameter-range}
    and  take $m=p-1$. We first establish affine decay for
    $\Psi_\infty$, with constants independent of the slope of the
    minimizing affine function. We then derive the corresponding estimates
    for  $\Psi_m$, which will be used in
    the comparison argument for measure data.
	
	\subsection{The two affine excesses}
	For $\ell_{a,b}(y)=b+a\cdot(y-x_0)$, define
	\begin{equation}\label{eq:mean-affine-excess}
		\Psi_m(u;x_0,r):=\inf_{a\in\R^n,\,b\in\R}
		\left[\frac1r\left(\fint_{B_r(x_0)}|u-\ell_{a,b}|^m
		\,\diff x\right)^{1/m}
		+\frac1r\Tail_{m,sp}(u-\ell_{a,b};x_0,r)\right]
	\end{equation}
	and
	\begin{equation}\label{eq:uniform-affine-excess}
		\Psi_\infty(u;x_0,r):=\inf_{a\in\R^n,\,b\in\R}
		\left[\frac1r\|u-\ell_{a,b}\|_{L^\infty(B_r(x_0))}
		+\frac1r\Tail_{m,sp}(u-\ell_{a,b};x_0,r)\right].
	\end{equation}
	All affine functions have finite tail because $sp>m$. In particular,
	$\Psi_m$ is finite for $u\in L_{sp}^m(\R^n)$, and $\Psi_\infty$
	is finite when $u$ is also bounded in $B_r(x_0)$. We always have
	$\Psi_m\le\Psi_\infty$.
	
	\begin{lemma}\label{lem:affine-minimizer}
		For $u\in L_{sp}^m(\R^n)$, the infimum defining $\Psi_m(u;x_0,r)$
		is attained. If also $u\in L^\infty(B_r(x_0))$, the infimum defining
		$\Psi_\infty(u;x_0,r)$ is attained. Every affine function
		$L(x)=b+a\cdot(x-x_0)$ satisfies
		\begin{equation}\label{eq:mean-affine-norm}
			|b|+r|a|\le C(n,m)
			\left(\fint_{B_r(x_0)}|L|^m\,\diff x\right)^{1/m}.
		\end{equation}
	\end{lemma}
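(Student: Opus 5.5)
I would prove \eqref{eq:mean-affine-norm} first, since the existence of minimizers will follow from it by a compactness argument. By translation and scaling I may take $x_0=0$ and $r=1$: substituting $x=rz$ gives $L(rz)=b+(ra)\cdot z$, so the claim becomes $|b|+|a'|\le C(\fint_{B_1}|b+a'\cdot z|^m\,\diff z)^{1/m}$ with $a'=ra$. The map $(a',b)\mapsto(\fint_{B_1}|b+a'\cdot z|^m\,\diff z)^{1/m}$ is a norm on $\R^{n+1}$ when $m\ge1$. It is continuous and absolutely homogeneous, the triangle inequality is Minkowski's inequality, and it vanishes only if $b+a'\cdot z=0$ a.e. on $B_1$, which forces $a'=0$ and $b=0$. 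Since $m=p-1>1$, all norms on $\R^{n+1}$ are equivalent, and this gives $C=C(n,m)$. If one wants an explicit constant, one can instead bound $|b|$ via the mean $b=\fint_{B_1}L$ together with H\"older, and then bound $|a'|$ by testing $L$ on the half-ball $\{z\cdot a'>|a'|/2\}$.

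For $\Psi_m$, I would write
\[
F(a,b):=\frac1r\Bigl(\fint_{B_r(x_0)}|u-\ell_{a,b}|^m\,\diff x\Bigr)^{1/m}+\frac1r\Tail_{m,sp}(u-\ell_{a,b};x_0,r).
\]
This is finite because $u\in L_{sp}^m$ and affine functions have finite tail when $sp>m$, as computed in Lemma~\ref{lem:affine-linearization}. Next I would show that $F$ is continuous on $\R^{n+1}$, indeed Lipschitz. Each term is a weighted $L^m$ norm of $u-\ell_{a,b}$, with weight $\fint_{B_r}$ for the first and $r^{sp}|y-x_0|^{-n-sp}\mathbf 1_{\R^n\setminus B_r}$ for the second. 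By Minkowski's inequality, $|F(a,b)-F(a',b')|\le F_0(\ell_{a-a',b-b'})$, where $F_0$ is the same functional evaluated on the affine function alone. Moreover $F_0(\ell_{a,b})\le C(|b|+r|a|)/r$, since $\int_{|y-x_0|>r}|y-x_0|^{m-n-sp}\,\diff y<\infty$ by $\beta>0$.

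Coercivity then comes from the norm inequality. Using $|\ell|\le|u|+|u-\ell|$, Minkowski, and \eqref{eq:mean-affine-norm}, I get
\[
\frac{|b|+r|a|}{r}\le \frac{C}{r}\Bigl(\fint_{B_r}|u|^m\Bigr)^{1/m}+C\,F(a,b).
\]
So every sublevel set $\{F\le F(0,0)\}$ is bounded, and it is closed by continuity, hence compact; the continuous $F$ therefore attains its minimum there. For $\Psi_\infty$ the same argument works with $\|u-\ell\|_{L^\infty(B_r)}$ in place of the mean. The sup norm is again Lipschitz in $(a,b)$ by the triangle inequality, since $\|\ell_{a,b}\|_{L^\infty(B_r)}\le|b|+r|a|$. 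Coercivity follows from $\|\ell\|_{L^\infty(B_r)}\ge(\fint_{B_r}|\ell|^m)^{1/m}$ combined with \eqref{eq:mean-affine-norm}, and finiteness at $(0,0)$ uses $u\in L^\infty(B_r)$.

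I expect no real obstacle. The only points needing care are the finiteness of the tail of affine functions, which uses $sp>m$, and the fact that $m>1$, which makes the first term a genuine norm so that Minkowski's inequality applies.
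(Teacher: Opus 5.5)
Your proof is correct and follows essentially the paper's route. Both arguments prove \eqref{eq:mean-affine-norm} first, use it to bound the coefficients of (near-)minimizing affine functions, and then conclude by compactness in $\R^{n+1}$. The differences are minor:
\begin{itemize}
\item For \eqref{eq:mean-affine-norm}, the paper gets an explicit constant from $b=\fint_{B_r(x_0)}L\,\diff x$ and the rotational identity $\bigl(\fint_{B_r(x_0)}|a\cdot(x-x_0)|^m\,\diff x\bigr)^{1/m}=c_{n,m}r|a|$, whereas you invoke equivalence of norms on $\R^{n+1}$.
\item For existence, the paper passes to the limit along a minimizing sequence, using uniform convergence on the ball for the local term and only Fatou's lemma for the tail. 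You instead show the functional is Lipschitz in $(a,b)$. This additionally needs the finiteness of affine tails, i.e.\ $sp>m$, which holds under the standing assumptions of that section.
\end{itemize}
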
	
	\begin{proof}
		We first prove \eqref{eq:mean-affine-norm}. Since
		\[
		\fint_{B_r(x_0)}(x-x_0)\,\diff x=0,
		\]
		we have
		\[
		b=\fint_{B_r(x_0)}L\,\diff x,
		\]
		and hence
		\[
		|b|
		\le
		\left(
		\fint_{B_r(x_0)}|L|^m\,\diff x
		\right)^{1/m}.
		\]
		Moreover, by a change of variables and rotational symmetry,
		\[
		\left(
		\fint_{B_r(x_0)}
		|a\cdot(x-x_0)|^m\,\diff x
		\right)^{1/m}
		=c_{n,m}r|a|,
		\]
		where \(c_{n,m}>0\). Since
		\[
		a\cdot(x-x_0)=L(x)-b,
		\]
		Minkowski's inequality gives
		\[
		r|a|
		\le
		C
		\left[
		\left(
		\fint_{B_r(x_0)}|L|^m\,\diff x
		\right)^{1/m}
		+|b|
		\right].
		\]
		Combining the last two estimates proves \eqref{eq:mean-affine-norm}.
		
		We next consider \(\Psi_m\). Define
		\[
		\mathcal J_m(\ell)
		:=
		\frac1r
		\left(
		\fint_{B_r(x_0)}|u-\ell|^m\,\diff x
		\right)^{1/m}
		+
		\frac1r
		\Tail_{m,sp}(u-\ell;x_0,r).
		\]
		Let
		\[
		\ell_j(x)=b_j+a_j\cdot(x-x_0)
		\]
		be a minimizing sequence. We may assume
		\[
		\mathcal J_m(\ell_j)
		\le
		\Psi_m(u;x_0,r)+1.
		\]
		Then
		\[
		\left(
		\fint_{B_r(x_0)}
		|u-\ell_j|^m\,\diff x
		\right)^{1/m}
		\le
		r\bigl(\Psi_m(u;x_0,r)+1\bigr).
		\]
		By Minkowski's inequality,
		\[
		\begin{aligned}
			\left(
			\fint_{B_r(x_0)}
			|\ell_j|^m\,\diff x
			\right)^{1/m}
			\le
			\left(
			\fint_{B_r(x_0)}
			|u|^m\,\diff x
			\right)^{1/m}+
			r\bigl(\Psi_m(u;x_0,r)+1\bigr).
		\end{aligned}
		\]
		Hence \eqref{eq:mean-affine-norm} gives
		\[
		\sup_j\bigl(|b_j|+r|a_j|\bigr)<\infty.
		\]
		After passing to a subsequence,
		\[
		a_j\to a,
		\qquad
		b_j\to b.
		\]
	Take
		\[
		\ell(x):=b+a\cdot(x-x_0).
		\]
		Then
		\[
		\|\ell_j-\ell\|_{L^\infty(B_r(x_0))}
		\le
		|b_j-b|+r|a_j-a|
		\longrightarrow0.
		\]
		Consequently, by the reverse triangle inequality, we obtain 
		\[
		\left(
		\fint_{B_r(x_0)}
		|u-\ell_j|^m\,\diff x
		\right)^{1/m}
		\longrightarrow
		\left(
		\fint_{B_r(x_0)}
		|u-\ell|^m\,\diff x
		\right)^{1/m}.
		\]
		Since \(\ell_j(y)\to\ell(y)\) for every \(y\in\R^n\),
		Fatou's lemma yields
		\[
		\Tail_{m,sp}(u-\ell;x_0,r)
		\le
		\liminf_{j\to\infty}
		\Tail_{m,sp}(u-\ell_j;x_0,r).
		\]
		Therefore
		\[
		\mathcal J_m(\ell)
		\le
		\liminf_{j\to\infty}
		\mathcal J_m(\ell_j)
		=
		\Psi_m(u;x_0,r).
		\]
		Since \(\Psi_m(u;x_0,r)\) is the infimum over all affine
		functions, the reverse inequality is immediate. Hence
		\[
		\mathcal J_m(\ell)=\Psi_m(u;x_0,r),
		\]
		and \(\ell\) is a minimizer.
		
		For \(\Psi_\infty\), let \(\ell_j\) be a minimizing sequence.
		Since \(u\in L^\infty(B_r(x_0))\),
		\[
		\|\ell_j\|_{L^\infty(B_r(x_0))}
		\le
		\|u\|_{L^\infty(B_r(x_0))}
		+
		\|u-\ell_j\|_{L^\infty(B_r(x_0))}
		\le C.
		\]
		Hence \eqref{eq:mean-affine-norm} again gives
		\[
		\sup_j\bigl(|b_j|+r|a_j|\bigr)<\infty.
		\]
		After passage to a convergent subsequence of the coefficients, the affine functions converge uniformly in the ball. The local \(L^\infty\) term therefore converges, and the tail is lower semicontinuous by Fatou's lemma. Thus the infimum defining
		\(\Psi_\infty(u;x_0,r)\) is also attained.
	\end{proof}

	We present the scaling used below. Let \(E>0\), \(b\in\mathbb R\), and define
	\[
	V(z):=\frac{v(x_0+rz)-b}{rE}.
	\]
	For an affine function \(\ell\), define its rescaling by
	\[
	\widehat\ell(z)
	:=
	\frac{\ell(x_0+rz)-b}{rE}.
	\]
	A change of variables in the local term and in the tail therefore gives
	\begin{equation}\label{eq:affine-excess-scaling}
		\Psi_q(V;0,t)
		=
		E^{-1}\Psi_q(v;x_0,rt),
		\qquad
		q\in\{m,\infty\},\quad t>0.
	\end{equation}
	The homogeneous equation is preserved by the same change of variables.
	
	In particular, if
	\[
	\ell_{a,b}(y)=b+a\cdot(y-x_0)
	\]
	is a minimizing affine function for \(\Psi_q(v;x_0,r)\), then
	\[
	\widehat\ell(z)
	=
	\frac{\ell_{a,b}(x_0+rz)-b}{rE}
	=
	\frac{a}{E}\cdot z
	\]
	is a minimizing affine function for \(\Psi_q(V;0,1)\). Hence, if
	\(E=\Psi_q(v;x_0,r)\), then
	\[
	\Psi_q(V;0,1)=1.
	\]
	
	\subsection{The controlled slope regime}
	\begin{lemma}[Affine decay in the controlled slope regime]
		\label{lem:controlled-affine-decay}
		Assume $p>2$ and $sp>p-1$.  Let
		\[
		v\in W^{s,p}_{\mathrm{loc}}(B_{2r}(x_0))
		\cap L_{sp}^{p-1}(\R^n)\cap L^\infty(B_{2r}(x_0))
		\]
		be a local weak solution of $(-\Delta_p)^sv=0$ in $B_{2r}(x_0)$.
		Let $M\ge1$, set $E:=\Psi_\infty(v;x_0,r)$, and let
		$\ell_{a,b}$ be a minimizing affine function in
		\eqref{eq:uniform-affine-excess}. If
		\[
		|a|\le M E,
		\]
		then there are $\sigma=\sigma(n,p,s)\in(0,1)$ and
		$C_M=C(n,p,s,M)\ge1$ such that
		\begin{equation}\label{eq:controlled-affine-decay}
			\Psi_\infty(v;x_0,\rho)
			\le C_M\left(\frac\rho r\right)^\sigma
			E
			\qquad(0<\rho\le r/16).
		\end{equation}
	\end{lemma}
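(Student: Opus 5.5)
The plan is to normalize, apply the homogeneous $C^{1,\alpha}$ estimate of Lemma~\ref{lem:homogeneous-C1a}, and then pass from the resulting Taylor expansion to affine decay with Lemma~\ref{lem:taylor-affine-tail}. If $E=0$, then $v$ is affine in $B_r(x_0)$ with the affine tail vanishing, so $v=\ell_{a,b}$ a.e. in $\R^n$ and \eqref{eq:controlled-affine-decay} is trivial. Otherwise we rescale as in \eqref{eq:affine-excess-scaling} and set
\[
V(z)=\frac{v(x_0+rz)-b}{rE}.
\]
Then $V$ is a local weak solution of $(-\Delta_p)^sV=0$ in $B_2$, and $\Psi_\infty(V;0,1)=1$ with minimizer $\widehat\ell(z)=\widehat a\cdot z$, where $\widehat a=a/E$ and $|\widehat a|\le M$. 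It therefore suffices to show $\Psi_\infty(V;0,t)\le C_M t^\sigma$ for $0<t\le1/16$.

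\emph{Step 1: a slope-dependent bound on $V$.} From $\|V-\widehat\ell\|_{L^\infty(B_1)}\le1$ and $|\widehat a|\le M$ we get $\|V\|_{L^\infty(B_1)}\le 1+M$, so $\essosc_{B_1}V\le 2(1+M)$. For the tail of $V-(V)_{B_1}$ on $\R^n\setminus B_1$, split $V-(V)_{B_1}=(V-\widehat\ell)+\widehat\ell-(V)_{B_1}$. The first piece has tail at most $1$. The second piece has tail at most $C(M+1)$, because $|\widehat\ell(y)|\le M|y|$ and $\int_{|y|>1}|y|^{m-n-sp}\,\diff y=C\int_1^\infty t^{-1-\beta}\,\diff t<\infty$, where we use $sp>m$. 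Hence $\mathcal H(V;0,1)\le C(1+M)$. Lemma~\ref{lem:homogeneous-C1a} then gives $V\in C^{1,\alpha}(B_{1/2})$ with $\|\nabla V\|_{L^\infty(B_{1/2})}+[\nabla V]_{C^{0,\alpha}(B_{1/2})}\le C(1+M)$, where $\alpha=\alpha(n,p,s)$.

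\emph{Step 2: Taylor expansion and tail control.} Let $L(z)=V(0)+\nabla V(0)\cdot z$. By the mean value theorem, $|V(z)-L(z)|\le C(1+M)|z|^{1+\alpha}$ on $B_{1/4}$. For the tail condition in \eqref{eq:abstract-taylor-bound}, we write $V-L=(V-\widehat\ell)+(\widehat\ell-L)$. On $\R^n\setminus B_1$ the first term contributes at most $1$ to $\int|V-L|^m|y|^{-n-sp}\,\diff y$, because of the tail term in $\Psi_\infty(V;0,1)=1$. On the annulus $B_1\setminus B_{1/4}$ it contributes at most $C$. The affine difference $\widehat\ell-L$ has coefficients bounded by $|V(0)|+|\nabla V(0)|+M\le C(1+M)$, so its contribution is at most $C(1+M)^m$, again by $sp>m$. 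Lemma~\ref{lem:taylor-affine-tail} therefore applies with $H=C(1+M)$ and yields
\[
\Psi_\infty(V;0,t)\le \frac1t\|V-L\|_{L^\infty(B_t)}+\frac1t\Tail_{m,sp}(V-L;0,t)\le C(1+M)t^\sigma,\qquad \sigma=\frac12\min\Bigl\{\alpha,\frac\beta{p-1}\Bigr\}.
\]
Scaling back with \eqref{eq:affine-excess-scaling} gives \eqref{eq:controlled-affine-decay} with $C_M=C(1+M)$.

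The main obstacle is Step 1, namely bounding $\mathcal H(V;0,1)$ through the affine approximation. This is exactly where the slope restriction $|a|\le ME$ enters: the oscillation of $V$, and hence the $C^{1,\alpha}$ constant, grows with $|\widehat a|$. That is why only the dependence $C_M$ appears here, and why the large-slope regime will need the separate linearized argument.
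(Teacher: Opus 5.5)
Your proposal is correct and follows the paper's proof essentially step for step. The steps match: normalize via \eqref{eq:affine-excess-scaling}, use $|\widehat a|\le M$ to bound $\mathcal H(V;0,1)\le C(1+M)$, apply Lemma~\ref{lem:homogeneous-C1a}, Taylor expand at the origin, check the far-tail hypothesis, then invoke Lemma~\ref{lem:taylor-affine-tail} and rescale. The only differences are cosmetic: you route the tail through $\widehat\ell$ instead of the paper's splitting $z-v(0)+(a-\nabla v(0))\cdot y$, and you record the sharper linear dependence $C_M=C(1+M)$.
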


	\begin{proof}
		Let $E=\Psi_\infty(v;x_0,r)$. If $E=0$, a minimizing affine
		function agrees with $v$ almost everywhere, and the conclusion follows. Suppose $E>0$.
		Using \eqref{eq:affine-excess-scaling} with a minimizing affine
		function \(\ell_{a,b}\), we may assume $x_0=0$, $r=E=1$ and
		\[
		v=a\cdot x+z,\qquad |a|\le M,\qquad
		\|z\|_{L^\infty(B_1)}+\Tail_{m,sp}(z;0,1)=1.
		\]
		Since $(v)_{B_1}=(z)_{B_1}$, we have
		\[
		\|v\|_{L^\infty(B_1)}\le M+1,\qquad |(v)_{B_1}|\le1.
		\]
		Moreover,
		\[
		\Tail_{m,sp}(a\cdot x;0,1)^m
		\le C|a|^m\int_1^\infty t^{m-sp-1}\,\diff t
		=\frac{C|a|^m}{\beta}.
		\]
		Minkowski's inequality therefore gives
		$\mathcal H(v;0,1)\le C_M$. Lemma~\ref{lem:homogeneous-C1a}
		yields
		\begin{equation}\label{eq:controlled-C1a-bound}
			\|\nabla v\|_{L^\infty(B_{1/2})}
			+[\nabla v]_{C^{0,\alpha}(B_{1/2})}\le C_M.
		\end{equation}
		Set $L(x)=v(0)+\nabla v(0)\cdot x$. Taylor's formula and
		\eqref{eq:controlled-C1a-bound} give
		\[
		|v(x)-L(x)|
		\le |x|\int_0^1|\nabla v(tx)-\nabla v(0)|\,\diff t
		\le C_M|x|^{1+\alpha},\qquad x\in B_{1/4}.
		\]
		On $B_1\setminus B_{1/4}$, both $v$ and $L$ are bounded by $C_M$.
		On $\R^n\setminus B_1$, use
		\[
		v(y)-L(y)=z(y)-v(0)+(a-\nabla v(0))\cdot y.
		\]
		The tail of $z$ and the bounds $|v(0)|\le1$,
		$|a-\nabla v(0)|\le C_M$ imply
		\[
		\int_{\R^n\setminus B_{1/4}}
		\frac{|v(y)-L(y)|^m}{|y|^{n+sp}}\,\diff y
		\le C_M\left(1+\int_1^\infty
		(t^{-1-sp}+t^{-1-\beta})\,\diff t\right)\le C_M.
		\]
		Lemma~\ref{lem:taylor-affine-tail}, with
		$\sigma=\tfrac12\min\{\alpha,\beta/m\}$, now gives
		\[
		\Psi_\infty(v;0,\rho)\le C_M\rho^\sigma,
		\qquad 0<\rho\le1/16.
		\]
		Rescale by \eqref{eq:affine-excess-scaling} to obtain
		\eqref{eq:controlled-affine-decay}.
	\end{proof}
	
	\subsection{The kernel \texorpdfstring{$K_e^0$}{Ke0} and its extension}
	\label{subsec:linearized-kernel}
	We turn to the large slope regime.  Set
	\[
	A:=|a|,
	\qquad
	e:=\frac{a}{A}\in\mathbb S^{n-1}.
	\]
	Lemma~\ref{lem:affine-linearization} gives a linear equation for the affine
	remainder. Multiplying that equation by $A^{2-p}$ yields a rescaled
	coefficient kernel whose value at the affine background is $K_e^0$. We first verify
	the structural estimates for $K_e^0$ with constants independent of the
	direction $e$. We then extend small local perturbations of $K_e^0$ to globally
	defined kernels to which Lemma~\ref{lem:fixed-radius-schauder} applies.
	
	The comparison of the rescaled coefficient with $K_e^0$ uses the
	following inequality.
	
	\begin{lemma}
		\label{lem:power-modulus}
		Let $p>2$ and $\kappa:=\min\{1,p-2\}$.  For every $L\ge1$,  there is
		$C=C(p,L)$ such that
		\begin{equation}\label{eq:power-modulus}
			\bigl||\tau_1|^{p-2}-|\tau_2|^{p-2}\bigr|
			\le C|\tau_1-\tau_2|^\kappa
			\qquad(\tau_1,\tau_2\in[-L,L]).
		\end{equation}
	\end{lemma}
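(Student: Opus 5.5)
We split into the two cases $p\ge3$ and $2<p<3$, which correspond to the two values of $\kappa$. Throughout, write $g(\tau):=|\tau|^{p-2}$ and note that $g$ is even and $C^1$ on $\R\setminus\{0\}$ with $|g'(\tau)|=(p-2)|\tau|^{p-3}$.

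\emph{Case $p\ge3$, so $\kappa=1$.} Here $p-3\ge0$, so $g\in C^1(\R)$ and $|g'(\tau)|\le (p-2)L^{p-3}$ on $[-L,L]$. The mean value theorem then gives \eqref{eq:power-modulus} with $C=(p-2)L^{p-3}$.

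\emph{Case $2<p<3$, so $\kappa=p-2\in(0,1)$.} In this case the constant can even be taken independent of $L$, because $t\mapsto t^{\kappa}$ is $\kappa$-Hölder on $[0,\infty)$ with constant $1$. Concretely, the reverse triangle inequality gives $\bigl||\tau_1|-|\tau_2|\bigr|\le|\tau_1-\tau_2|$. Subadditivity of $t\mapsto t^\kappa$ on $[0,\infty)$ (a concave function vanishing at $0$) gives $\bigl|t_1^\kappa-t_2^\kappa\bigr|\le|t_1-t_2|^\kappa$ for $t_1,t_2\ge0$. Applying this with $t_i=|\tau_i|$ and combining with the previous inequality yields \eqref{eq:power-modulus} with $C=1$.

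No step is a genuine obstacle. The only point requiring care is the subadditivity inequality. Assume $t_1\ge t_2\ge0$; then subadditivity gives $t_1^\kappa\le (t_1-t_2)^\kappa+t_2^\kappa$, and rearranging gives the claim. Finally, the constant $C(p,L)$ can be taken uniformly as $\max\{1,(p-2)L^{p-3}\}$.
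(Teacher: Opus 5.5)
Your argument is correct and is essentially the paper's proof: the reverse triangle inequality plus subadditivity of $t\mapsto t^{p-2}$ when $p-2\le1$, and the mean value theorem when $p-2\ge1$.

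There is one small slip. For $p=3$ the function $g(\tau)=|\tau|$ is not $C^1$ at $0$. You can fix this in either of two ways:
\begin{itemize}
\item place $p=3$ in your second case, since the subadditivity argument works verbatim with $\kappa=1$;
\item as the paper does, apply the mean value theorem to $t\mapsto t^{p-2}$ on $[0,L]$ with $t_i=|\tau_i|$, and then use $\bigl||\tau_1|-|\tau_2|\bigr|\le|\tau_1-\tau_2|$.
\end{itemize}
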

	
	\begin{proof}
		Set \(d:=p-2>0\). For \(0<d\le1\),
		\[
		\bigl||\tau_1|^d-|\tau_2|^d\bigr|
		\le
		\bigl||\tau_1|-|\tau_2|\bigr|^d
		\le
		|\tau_1-\tau_2|^d,
		\]
		while for \(d\ge1\), the mean value theorem on \([0,L]\) yields
		\[
		\bigl||\tau_1|^d-|\tau_2|^d\bigr|
		\le
		dL^{d-1}|\tau_1-\tau_2|.
		\]
		Hence \eqref{eq:power-modulus} follows with
		\(\kappa=\min\{1,d\}\).
	\end{proof}
	From the definition of $\mathbb A_{a,w}$,  we have
	\[
	\mathbb A_{a,0}(x,y)=(p-1)|a\cdot(x-y)|^{p-2}.
	\]
	Therefore
	\begin{align*}
		A^{2-p}\frac{\mathbb A_{a,0}(x,y)}{|x-y|^{n+sp}}
		&=(p-1)
		\left|e\cdot\frac{x-y}{|x-y|}\right|^{p-2}
		|x-y|^{-n-(sp-p+2)}.
	\end{align*}
	The right hand side is the kernel $K_e^0$ considered next.
	
	\begin{lemma}
		\label{lem:affine-model-kernel}
		Let $n\ge2$, $p>2$, $s\in(0,1)$, and $sp>p-1$. Set
		\[
		\beta:=sp-p+1,
		\qquad
		2\mathfrak s:=1+\beta=sp-p+2.
		\]
		Then $\beta\in(0,1)$ and $2\mathfrak s\in(1,2)$. In particular,
		$n>2\mathfrak s$. For
		$e\in\mathbb S^{n-1}$, define
		\begin{equation}\label{eq:affine-model-kernel}
			K_e^0(x,y)
			:=(p-1)
			\left|
			e\cdot\frac{x-y}{|x-y|}
			\right|^{p-2}
			|x-y|^{-n-2\mathfrak s},
			\qquad x\ne y.
		\end{equation}
		Then $K_e^0$ is a nonnegative measurable symmetric kernel satisfying
		\[
		0\le K_e^0(x,y)
		\le(p-1)|x-y|^{-n-2\mathfrak s},
		\qquad x\ne y,
		\]
		and
		\[
		K_e^0(x+h,y+h)=K_e^0(x,y),\quad
		K_e^0(x,x+z)=K_e^0(x,x-z)\]
		for every $x,y,h,z\in\mathbb R^n$ with $x\ne y$ and $z\ne0$.
		Moreover, there exist
		\[
		\lambda_0=\lambda_0(n,p,s)>0,
		\qquad
		\Lambda_0=\Lambda_0(n,p,s)>0,
		\]
		independent of $e$, such that, for every $\alpha\in(0,1]$,
		\[
		K_e^0\in
		\mathscr K(2\mathfrak s,\alpha;\lambda_0,\Lambda_0,1).
		\]
	\end{lemma}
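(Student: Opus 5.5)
The parameter statements are elementary and come first. Since $sp>p-1$ and $s<1$, we have $0<\beta=sp-p+1<p(s-1)+1<1$, so $2\mathfrak s=1+\beta\in(1,2)$. Since $n\ge2$, it follows that $n>2\mathfrak s$.

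The pointwise properties of $K_e^0$ follow directly from \eqref{eq:affine-model-kernel}. Nonnegativity and measurability are clear. Symmetry holds because $|e\cdot\omega|^{p-2}$ is even in $\omega$. The upper bound uses $|e\cdot\omega|\le1$. Translation invariance holds because $K_e^0$ depends only on $x-y$. Evenness, $K_e^0(x,x+z)=K_e^0(x,x-z)$, again follows from the evenness of $|e\cdot\omega|^{p-2}$.

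These identities settle two of the class conditions at once. The left-hand sides of \eqref{eq:linear-kernel-translation} and \eqref{eq:linear-kernel-almost-even} vanish identically, so both hold with $M=1$ for every $\alpha\in(0,1]$.

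For the upper bound \eqref{eq:linear-kernel-upper}, we use the pointwise bound together with polar coordinates:
\[
\rho^{2\mathfrak s}\int_{B_{2\rho}(x)\setminus B_\rho(x)}K_e^0(x,y)\,\diff y\le(p-1)|\mathbb S^{n-1}|\rho^{2\mathfrak s}\int_\rho^{2\rho}t^{-1-2\mathfrak s}\,\diff t=:\Lambda_0 .
\]
This quantity depends only on $n,p,s$.

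The main point is the lower bound \eqref{eq:linear-kernel-lower}, and it must be uniform in $e$ even though the kernel vanishes on the hyperplane $e\cdot\omega=0$. Write $y=x-t\omega$ with $t\in(0,\rho)$ and $\omega\in\mathbb S^{n-1}$. Then
\[
\rho^{2\mathfrak s-2}\int_{B_\rho(x)}|\xi\cdot(x-y)|^2K_e^0(x,y)\,\diff y
=(p-1)\rho^{2\mathfrak s-2}\int_0^\rho t^{1-2\mathfrak s}\,\diff t\int_{\mathbb S^{n-1}}|\xi\cdot\omega|^2|e\cdot\omega|^{p-2}\,\diff\mathcal H^{n-1}(\omega).
\]
The radial integral converges because $2\mathfrak s<2$, and it equals $\rho^{2-2\mathfrak s}/(2-2\mathfrak s)$, so the powers of $\rho$ cancel. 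It therefore suffices to show
\[
\Phi(e,\xi):=\int_{\mathbb S^{n-1}}|\xi\cdot\omega|^2|e\cdot\omega|^{p-2}\,\diff\mathcal H^{n-1}(\omega)\ge c(n,p)>0
\]
for all unit vectors $e$ and $\xi$; the general case follows by homogeneity in $\xi$. The function $\Phi$ is continuous on the compact set $\mathbb S^{n-1}\times\mathbb S^{n-1}$. It is strictly positive there, since $\{\xi\cdot\omega=0\}\cup\{e\cdot\omega=0\}$ is a union of two great spheres and so has $\mathcal H^{n-1}$-measure zero, which makes the integrand positive almost everywhere. The minimum $c$ is therefore positive.

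This minimum does not depend on $e$, because by rotation invariance $\Phi$ depends only on the angle $e\cdot\xi$. One can also avoid compactness and give an explicit bound: restrict to the set where $|e\cdot\omega|\ge1/2$ and $|\xi\cdot\omega|\ge1/2$, which always contains a spherical cap of fixed size around $\pm e$ or $\pm\xi$. Either way we obtain $\lambda_0=(p-1)c/(2-2\mathfrak s)$, which depends only on $n,p,s$. This proves $K_e^0\in\mathscr K(2\mathfrak s,\alpha;\lambda_0,\Lambda_0,1)$.
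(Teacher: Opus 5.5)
Your proof is correct and follows the paper's route. Translation invariance and evenness make the left-hand sides of \eqref{eq:linear-kernel-translation} and \eqref{eq:linear-kernel-almost-even} vanish, and polar coordinates give $\Lambda_0$ and reduce \eqref{eq:linear-kernel-lower} to $\min_{e,\xi\in\mathbb S^{n-1}}\Phi(e,\xi)>0$ via continuity and compactness. Your observation that the integrand vanishes only on a null union of two great subspheres is an equally valid substitute for the paper's choice of a point $\omega_0$ with positive integrand.

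Two small slips, neither load-bearing:
\begin{enumerate}
\item $sp-p+1=p(s-1)+1$ is an identity, not a strict inequality.
\item Your optional explicit bound misplaces the cap. For $e\perp\xi$, no cap around $\pm e$ or $\pm\xi$ lies in $\{|e\cdot\omega|\ge1/2,\ |\xi\cdot\omega|\ge1/2\}$. The cap should instead be centred at the bisector of $e$ and $\pm\xi$, with the sign chosen so that $e\cdot(\pm\xi)\ge0$.
\end{enumerate}
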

	
	\begin{proof}
		From \eqref{eq:affine-model-kernel}, the kernel \(K_e^0\) is nonnegative, measurable, and symmetric, and satisfies
		\[
		0\le K_e^0(x,y)
		\le (p-1)|x-y|^{-n-2\mathfrak s}.
		\]
		Since \(K_e^0(x,y)\) depends only on the difference \(x-y\) and is invariant under replacing \(x-y\) by \(y-x\), we have
		\[
		K_e^0(x+h,y+h)=K_e^0(x,y),
		\qquad
		K_e^0(x,x+z)=K_e^0(x,x-z),
		\]
		so that the left hand sides of \eqref{eq:linear-kernel-translation} and \eqref{eq:linear-kernel-almost-even} both vanish.
		
		Using the fact that \(|e\cdot\omega|\le1\), we obtain
		\begin{align*}
			&\rho^{2\mathfrak s}
			\int_{B_{2\rho}(x)\setminus B_\rho(x)}
			K_e^0(x,y)\,\diff y\\
			&\qquad=(p-1)
			\left(\int_1^2t^{-1-2\mathfrak s}\,\diff t\right)
			\left(
			\int_{\mathbb S^{n-1}}
			|e\cdot\omega|^{p-2}\,\diff\omega
			\right)\le
			C
			\int_1^2t^{-1-2\mathfrak s}\,\diff t
			=:\Lambda_0.
		\end{align*}
		For the lower bound, define for \(e\in\mathbb S^{n-1}\) and \(\xi\in\mathbb R^n\)
		\[
		F(e,\xi)	:=
		\int_{\mathbb S^{n-1}}
		|e\cdot\omega|^{p-2}
		|\xi\cdot\omega|^2\,\diff\omega.
		\]
		The integrand is continuous and bounded by \(1\) when $e,\xi,\omega \in \mathbb S^{n-1}$, so the restriction of \(F\) to \(\mathbb S^{n-1}\times\mathbb S^{n-1}\) is continuous. Fixing \(e,\xi\in\mathbb S^{n-1}\) and choosing
		\[
		\omega_0:=
		\begin{cases}
			e,&e\cdot\xi\ne0,\\[2mm]
			\dfrac{e+\xi}{\sqrt2},&e\cdot\xi=0,
		\end{cases}
		\]
		we have \(\omega_0\in\mathbb S^{n-1}\) and
		\[
		|e\cdot\omega_0|^{p-2}|\xi\cdot\omega_0|^2>0.
		\]
		By continuity, the integrand is positive on a spherical neighborhood of \(\omega_0\) with positive surface measure, whence \(F(e,\xi)>0\). Compactness of \(\mathbb S^{n-1}\times\mathbb S^{n-1}\) then gives
		\[
		c_{n,p}
		:=
		\min_{e,\xi\in\mathbb S^{n-1}}F(e,\xi)>0.
		\]
		For \(\xi\ne0\), setting \(\widehat\xi:=\xi/|\xi|\in\mathbb S^{n-1}\) yields
		\[
		F(e,\xi)
		=
		|\xi|^2F(e,\widehat\xi)
		\ge c_{n,p}|\xi|^2,
		\]
		and the same inequality is immediate when \(\xi=0\). Therefore
		\[
		F(e,\xi)\ge c_{n,p}|\xi|^2
		\qquad
		(e\in\mathbb S^{n-1},\ \xi\in\mathbb R^n).
		\]
		
		Using this estimate, we obtain
		\begin{align*}
			\rho^{2\mathfrak s-2}
			\int_{B_\rho(x)}
			|\xi\cdot(x-y)|^2K_e^0(x,y)\,\diff y&=
			(p-1)\rho^{2\mathfrak s-2}
			\int_0^\rho t^{1-2\mathfrak s}\,\diff t\,
			F(e,\xi)\\
			&=
			\frac{p-1}{2-2\mathfrak s}F(e,\xi)\ge
			\frac{(p-1)c_{n,p}}{2-2\mathfrak s}|\xi|^2.
		\end{align*}
		Since \(2-2\mathfrak s=p(1-s)>0\), we may set
		\[
		\lambda_0
		:=
		\frac{(p-1)c_{n,p}}{2-2\mathfrak s}>0.
		\]
		Thus \eqref{eq:linear-kernel-upper} and \eqref{eq:linear-kernel-lower} hold with constants \(\Lambda_0\) and \(\lambda_0\), respectively. Combining the preceding estimates yields
		\[
		K_e^0\in
		\mathscr K(2\mathfrak s,\alpha;\lambda_0,\Lambda_0,1)
		\qquad
		\text{for every }\alpha\in(0,1].
		\]
	\end{proof}
	
	The large slope argument gives control of the coefficient kernel on
    an interior ball. To apply Lemma~\ref{lem:fixed-radius-schauder}, we
    need a kernel defined on the whole space. The following extension
    preserves the required structural bounds for a local perturbation of $K_e^0$.
	
	\begin{lemma}
		\label{lem:global-kernel-extension}
		Fix $e\in\mathbb S^{n-1}$, and let $K_0:=K_e^0$,
		$\lambda_0$, and $\Lambda_0$ be as in
		Lemma~\ref{lem:affine-model-kernel}.
		Let $\eta\in(0,1)$ and let $K$ be a nonnegative
		measurable symmetric kernel on	$(B_{2/3}\times B_{2/3})\setminus\{x=y\}$, and set $H:=K-K_0$ there.
		Assume that, for some $\varepsilon,M\ge0$,
		\begin{align*}
			|H(x,y)|&\le\varepsilon|x-y|^{-n-2\mathfrak s}
			&&(x,y\in B_{2/3},\ x\ne y),\\
			|H(x+h,y+h)-H(x,y)|
			&\le M|h|^\eta|x-y|^{-n-2\mathfrak s}
			&&(x,y,x+h,y+h\in B_{2/3}),\\
			|H(x,x+z)-H(x,x-z)|
			&\le M|z|^{\eta-n-2\mathfrak s}
			&&(x,x+z,x-z\in B_{2/3}).
		\end{align*}
		Choose
		\[
		\chi\in C_c^\infty(B_{2/3}),
		\qquad 0\le\chi\le1,
		\qquad \chi\equiv1\quad\text{on }B_{5/8},
		\]
		and, for $x\ne y$, define
		\[
		\widetilde K(x,y):=K_0(x,y)+
		\begin{cases}
			\chi(x)\chi(y)H(x,y),&x,y\in B_{2/3},\\
			0,&\text{otherwise}.
		\end{cases}
		\]
		There is $\varepsilon_*=\varepsilon_*(n,p,s)>0$ such that, if
		$\varepsilon\le\varepsilon_*$, then
		\begin{equation*}
			\widetilde K\in
			\mathscr K(2\mathfrak s,\eta;\lambda_0/2,\Lambda_*,M_*),
		\end{equation*}
		where $\Lambda_*$ and $M_*$ depend only on
		$n,p,s,\eta,M$ and the fixed function $\chi$.  Moreover,
		\[
		\widetilde K=K\quad\text{on }B_{5/8}\times B_{5/8},
		\qquad
		|\widetilde K(x,y)-K_0(x,y)|
		\le\varepsilon|x-y|^{-n-2\mathfrak s}.
		\]
	\end{lemma}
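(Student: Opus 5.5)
We write $\widetilde H:=\widetilde K-K_0$, so that $\widetilde H(x,y)=\chi(x)\chi(y)H(x,y)$ when $x,y\in B_{2/3}$ and $\widetilde H=0$ otherwise. The identity $\widetilde K=K$ on $B_{5/8}\times B_{5/8}$ holds because $\chi\equiv1$ there. The pointwise bound $|\widetilde K-K_0|\le\varepsilon|x-y|^{-n-2\mathfrak s}$ follows from $0\le\chi\le1$ and the hypothesis on $H$. Symmetry and measurability of $\widetilde K$ are inherited from $K$, $K_0$ and the symmetric factor $\chi(x)\chi(y)$. For nonnegativity, note that on $B_{2/3}\times B_{2/3}$ we have $\widetilde K=(1-\chi(x)\chi(y))K_0+\chi(x)\chi(y)K$, a convex combination of nonnegative kernels; off that set $\widetilde K=K_0\ge0$. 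The remaining work is to verify the four conditions \eqref{eq:linear-kernel-upper}--\eqref{eq:linear-kernel-almost-even} for $\widetilde K$, using Lemma~\ref{lem:affine-model-kernel} for $K_0$ and controlling the perturbation $\widetilde H$.

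The upper bound \eqref{eq:linear-kernel-upper} comes from $|\widetilde H|\le\varepsilon|x-y|^{-n-2\mathfrak s}$, which gives $\Lambda_*=\Lambda_0+C(n,\mathfrak s)\varepsilon$. For the lower bound \eqref{eq:linear-kernel-lower} we estimate
\[
\rho^{2\mathfrak s-2}\int_{B_\rho(x)}|\xi\cdot(x-y)|^2|\widetilde H|\,\diff y
\le\varepsilon|\xi|^2\rho^{2\mathfrak s-2}\int_{B_\rho}|z|^{2-n-2\mathfrak s}\,\diff z
=\frac{C(n)\varepsilon}{2-2\mathfrak s}|\xi|^2 .
\]
Choosing $\varepsilon_*$ so that the right-hand side is at most $\lambda_0|\xi|^2/2$ gives the constant $\lambda_0/2$. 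This $\varepsilon_*$ depends only on $n,p,s$, uniformly in $\rho$ and $x$, because the computation is scale invariant.

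The two regularity conditions are the main obstacle, because the cutoff and the restriction to $B_{2/3}$ break exact translation invariance. Since $K_0$ is translation invariant and even, only $\widetilde H$ contributes. For \eqref{eq:linear-kernel-translation} we treat two regimes. If $|h|\ge1$, we bound each term by $\varepsilon|x-y|^{-n-2\mathfrak s}$; integrating over the annulus gives $C\rho^{-2\mathfrak s}\le C|h|^\eta\rho^{-2\mathfrak s}$. If $|h|<1$, we expand
\[
\widetilde H(x+h,y+h)-\widetilde H(x,y)
\]
into three pieces:
\begin{itemize}
\item $[\chi(x+h)\chi(y+h)-\chi(x)\chi(y)]H(x+h,y+h)$, which is at most $2\|\nabla\chi\|_\infty|h|\,\varepsilon|x-y|^{-n-2\mathfrak s}$;
\item $\chi(x)\chi(y)[H(x+h,y+h)-H(x,y)]$ when all four points lie in $B_{2/3}$, which is at most $M|h|^\eta|x-y|^{-n-2\mathfrak s}$;
\item the cases where some of the points leave $B_{2/3}$.
\end{itemize}
In the third case the cutoff vanishes at the points outside $B_{2/3}$. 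We then write, for instance, $\chi(x)\chi(y)H(x,y)$ with $\operatorname{dist}(x,\partial B_{2/3})\le|h|$ or with $y$ in that strip. Since $\chi$ vanishes on $\partial B_{2/3}$, this gives $\chi(x)\le\|\nabla\chi\|_\infty|h|$, so this term is also $O(|h|)|x-y|^{-n-2\mathfrak s}$. Each piece integrates over the annulus to $C|h|^\eta\rho^{-2\mathfrak s}$, using $|h|\le|h|^\eta$ for $|h|<1$.

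For \eqref{eq:linear-kernel-almost-even} we write
\begin{align*}
\widetilde H(x,x+z)-\widetilde H(x,x-z)
&=\chi(x)[\chi(x+z)-\chi(x-z)]H(x,x+z)\\
&\quad+\chi(x)\chi(x-z)[H(x,x+z)-H(x,x-z)].
\end{align*}
The first term is bounded by $2\|\nabla\chi\|_\infty|z|\varepsilon|z|^{-n-2\mathfrak s}$. The second is bounded by $M|z|^{\eta-n-2\mathfrak s}$, with the same boundary convention when a point lies outside $B_{2/3}$. Integrating over $B_{2\rho}\setminus B_\rho$ gives $C\rho^{\eta-2\mathfrak s}$ for $\rho\le1$. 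For $\rho>1$ the annulus leaves $B_{4/3}$, and only the part with $|z|\le 4/3$ matters; that part is nonempty only for $\rho\le 4/3$, and there $\rho^{\eta-2\mathfrak s}$ is comparable to $\rho^{-2\mathfrak s}$. Collecting the constants gives $M_*=M_*(n,p,s,\eta,M,\chi)$.
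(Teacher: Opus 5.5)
Your proposal is correct and follows the same route as the paper. It controls the two ellipticity conditions perturbatively through $|\widetilde K-K_0|\le\varepsilon|x-y|^{-n-2\mathfrak s}$, and it obtains the translation and almost-even conditions from the product-rule splitting of $\chi(x)\chi(y)H(x,y)$ into small and large shifts. The paper uses the threshold $h_0=\min\{d/2,1\}$ with $d=\dist(\supp\chi,\R^n\setminus B_{2/3})$, so boundary cases never arise, whereas you use threshold $1$ and handle them with the Lipschitz bound $\chi(x)\le\|\nabla\chi\|_\infty|h|$ near $\partial B_{2/3}$; this works equally well.
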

	\begin{proof}
		We extend $H$ by zero to $\R^n\times\R^n$ and define
		\[
		c(x,y):=\chi(x)\chi(y),
		\qquad
		G(x,y):=\widetilde K(x,y)-K_0(x,y)=c(x,y)H(x,y).
		\]
		Since $0\le c\le1$, for $x,y\in B_{2/3}$ we have
		\[
		\widetilde K
		=K_0+c(K-K_0)
		=cK+(1-c)K_0,
		\]
		whereas $\widetilde K=K_0$ whenever at least one of $x,y$ lies outside $B_{2/3}$. Hence $\widetilde K$ is nonnegative and symmetric. Moreover, since $\chi\equiv1$ on $B_{5/8}$,
		\[
		\widetilde K(x,y)
		=K_0(x,y)+H(x,y)
		=K(x,y)
		\qquad
		(x,y\in B_{5/8},\ x\ne y),
		\]
		and we have the pointwise bound
		\begin{equation}\label{eq:G-pointwise-bound}
			|G(x,y)|
			\le \varepsilon |x-y|^{-n-2\mathfrak s}
			\qquad
			(x,y\in\mathbb R^n,\ x\ne y).
		\end{equation}
		
		We turn to the ellipticity conditions \eqref{eq:linear-kernel-upper} and \eqref{eq:linear-kernel-lower}. From \eqref{eq:G-pointwise-bound} and the corresponding upper bound for $K_0$, we obtain
		\begin{align*}
			\rho^{2\mathfrak s}
			\int_{B_{2\rho}(x)\setminus B_\rho(x)}
			\widetilde K(x,y)\,\diff y
			&\le
			\rho^{2\mathfrak s}
			\int_{B_{2\rho}(x)\setminus B_\rho(x)}
			K_0(x,y)\,\diff y
			+
			\rho^{2\mathfrak s}
			\int_{B_{2\rho}(x)\setminus B_\rho(x)}
			|G(x,y)|\,\diff y
			\\
			&\le
			\Lambda_0
			+
			\varepsilon\rho^{2\mathfrak s}
			\int_{B_{2\rho}(x)\setminus B_\rho(x)}
			|x-y|^{-n-2\mathfrak s}\,\diff y
			\le
			\Lambda_0+C\varepsilon.
		\end{align*}
		Similarly, for every $\xi\in\R^n$,
		\begin{align*}
			&\rho^{2\mathfrak s-2}
			\int_{B_\rho(x)}
			|\xi\cdot(x-y)|^2
			\widetilde K(x,y)\,\diff y
			\\
			&\quad\ge
			\rho^{2\mathfrak s-2}
			\int_{B_\rho(x)}
			|\xi\cdot(x-y)|^2
			K_0(x,y)\,\diff y
			- \rho^{2\mathfrak s-2}
			\int_{B_\rho(x)}
			|\xi\cdot(x-y)|^2
			|G(x,y)|\,\diff y
			\\
			&\quad\ge
			\lambda_0|\xi|^2
			-
			\varepsilon|\xi|^2
			\rho^{2\mathfrak s-2}
			\int_{B_\rho(x)}
			|x-y|^{2-n-2\mathfrak s}\,\diff y
			\ge
			(\lambda_0-C\varepsilon)|\xi|^2.
		\end{align*}
		Choosing $\varepsilon_*=\varepsilon_*(n,p,s)>0$ with $C\varepsilon_*\le\lambda_0/2$, we have, whenever $\varepsilon\le\varepsilon_*$,
		\[
		\rho^{2\mathfrak s-2}
		\int_{B_\rho(x)}
		|\xi\cdot(x-y)|^2
		\widetilde K(x,y)\,\diff y
		\ge
		\frac{\lambda_0}{2}|\xi|^2.
		\]
		Thus \eqref{eq:linear-kernel-upper} and \eqref{eq:linear-kernel-lower} hold with
		\[
		\lambda=\frac{\lambda_0}{2},
		\qquad
		\Lambda_*:=\Lambda_0+C\varepsilon_*.
		\]
		
		It remains to verify the translation and almost even estimates. Set
		\[
		d:=\dist(\supp\chi,\R^n\setminus B_{2/3})>0,
		\qquad
		h_0:=\min\{d/2,1\}.
		\]
		For $|h|<h_0$, if at least one of $c(x+h,y+h)$ and $c(x,y)$ is nonzero, then
		\[
		x,y,x+h,y+h\in B_{2/3},
		\]
		so the translation estimate for $H$ applies. From the identity
		\[
		\begin{aligned}
			G(x+h,y+h)-G(x,y)
			&=
			c(x+h,y+h)
			\bigl[H(x+h,y+h)-H(x,y)\bigr]
			\\
			&\quad+
			\bigl[c(x+h,y+h)-c(x,y)\bigr]H(x,y),
		\end{aligned}
		\]
		we deduce
		\[
		\begin{aligned}
			|G(x+h,y+h)-G(x,y)|
			&\le
			M|h|^\eta|x-y|^{-n-2\mathfrak s}
			\\
			&\quad+
			|c(x+h,y+h)-c(x,y)|
			|H(x,y)|.
		\end{aligned}
		\]
		Since $\chi$ is Lipschitz, $|h|\le1$, and $\eta\in(0,1)$, we have
		\[
		|c(x+h,y+h)-c(x,y)|
		\le C_\chi |h|\le C_\chi |h|^{\eta},
		\]
		and together with the first bound for $H$, this gives
		\[
		|G(x+h,y+h)-G(x,y)|
		\le
		C(M+\varepsilon)
		|h|^\eta|x-y|^{-n-2\mathfrak s}.
		\]
		If both $c(x+h,y+h)$ and $c(x,y)$ vanish, the left hand side is zero, so the same estimate is valid.
		
		For $|h|\ge h_0$, \eqref{eq:G-pointwise-bound} yields
		\[
		\begin{aligned}
			|G(x+h,y+h)-G(x,y)|
			&\le
			|G(x+h,y+h)|+|G(x,y)|
			\\
			&\le
			2\varepsilon|x-y|^{-n-2\mathfrak s}\le
			2\varepsilon h_0^{-\eta}
			|h|^\eta|x-y|^{-n-2\mathfrak s}.
		\end{aligned}
		\]
		Consequently, for all $h\in\R^n$,
		\[
		|G(x+h,y+h)-G(x,y)|
		\le
		C|h|^\eta|x-y|^{-n-2\mathfrak s},
		\]
		where $C$ depends only on $\eta,M,\varepsilon_*$ and $\chi$. Since
		\[
		K_0(x+h,y+h)-K_0(x,y)=0,
		\]
		we conclude that
		\begin{align*}
			\int_{B_{2\rho}(x)\setminus B_\rho(x)}
			|\widetilde K(x+h,y+h)-\widetilde K(x,y)|\,\diff y
			&\le
			C|h|^\eta
			\int_{B_{2\rho}(x)\setminus B_\rho(x)}
			|x-y|^{-n-2\mathfrak s}\,\diff y\\
			&\le
			C|h|^\eta\rho^{-2\mathfrak s}.
		\end{align*}
		
		Finally, to verify the almost even estimate \eqref{eq:linear-kernel-almost-even}, define
		\[
		c_\pm:=\chi(x)\chi(x\pm z),
		\qquad
		H_\pm:=H(x,x\pm z),
		\]
		so that
		\[
		\begin{aligned}
			G(x,x+z)-G(x,x-z)
			&=
			c_+(H_+-H_-)
			+(c_+-c_-)H_-.
		\end{aligned}
		\]
		Suppose first that $|z|<h_0$. If at least one of $c_+$ and $c_-$ is nonzero, then
		\[
		x,\ x+z,\ x-z\in B_{2/3},
		\]
		so the almost even assumption on $H$ gives
		\[
		|H_+-H_-|
		\le
		M|z|^{\eta-n-2\mathfrak s}.
		\]
		Moreover, since
		\[
		|c_+-c_-|
		\le C_\chi|z|,
		\]
		and $|z|\le1$, it follows that
		\[
		\begin{aligned}
			|(c_+-c_-)H_-|
			\le
			C\varepsilon
			|z|^{1-n-2\mathfrak s}
			\le
			C\varepsilon
			|z|^{\eta-n-2\mathfrak s}.
		\end{aligned}
		\]
		Thus
		\[
		|G(x,x+z)-G(x,x-z)|
		\le
		C(M+\varepsilon)
		|z|^{\eta-n-2\mathfrak s}.
		\]
		If $c_+=c_-=0$, the left hand side vanishes.
		
		For $|z|\ge h_0$, \eqref{eq:G-pointwise-bound} gives
		\[
		\begin{aligned}
			|G(x,x+z)-G(x,x-z)|
			\le
			2\varepsilon |z|^{-n-2\mathfrak s}
			\le
			2\varepsilon h_0^{-\eta}
			|z|^{\eta-n-2\mathfrak s}.
		\end{aligned}
		\]
		Hence, for every $z\ne0$,
		\[
		|G(x,x+z)-G(x,x-z)|
		\le
		C|z|^{\eta-n-2\mathfrak s}.
		\]
		Since
		\[
		K_0(x,x+z)=K_0(x,x-z),
		\]
		we obtain
		\begin{align*}
			\int_{B_{2\rho}\setminus B_\rho}
			|\widetilde K(x,x+z)-\widetilde K(x,x-z)|\,\diff z
			&\le
			C
			\int_{B_{2\rho}\setminus B_\rho}
			|z|^{\eta-n-2\mathfrak s}\,\diff z\\
			&\le
			C\rho^{\eta-2\mathfrak s}.
		\end{align*}
		Therefore
		\[
		\widetilde K\in
		\mathscr K
		(2\mathfrak s,\eta;\lambda_0/2,\Lambda_*,M_*),
		\]
		with $M_*$ depending only on
		$n,p,s,\eta,M$ and the fixed cutoff $\chi$.
	\end{proof}
	
	We will apply the Schauder estimate with an exponent smaller than the regularity exponent obtained for the kernel. We therefore record that the kernel assumptions remain valid when the exponent is decreased.
	\begin{lemma}
		\label{lem:kernel-exponent-downgrade}
		Let $0<\alpha\le\eta\le1$ and
		\[
		K\in\mathscr K(2\mathfrak s,\eta;\lambda,\Lambda,M).
		\]
		Then
		\begin{equation}\label{eq:kernel-exponent-downgrade}
			K\in\mathscr K
			\bigl(2\mathfrak s,\alpha;\lambda,\Lambda,
			\max\{M,2\Lambda\}\bigr).
		\end{equation}
	\end{lemma}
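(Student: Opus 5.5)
The plan is to check the four defining conditions \eqref{eq:linear-kernel-upper}--\eqref{eq:linear-kernel-almost-even} one at a time with the exponent $\alpha$ in place of $\eta$. The constants $\lambda$ and $\Lambda$ stay fixed and only $M$ may grow. Conditions \eqref{eq:linear-kernel-upper} and \eqref{eq:linear-kernel-lower} do not involve the exponent at all, so they hold verbatim with $\lambda,\Lambda$. The work is in the translation estimate \eqref{eq:linear-kernel-translation} and the almost even estimate \eqref{eq:linear-kernel-almost-even}. For each of them I split into a small-parameter regime and a large-parameter regime.

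For the translation estimate, first take $|h|\le1$. Since $\alpha\le\eta$, we have $|h|^\eta\le|h|^\alpha$, so the hypothesis gives the bound $M|h|^\alpha\rho^{-2\mathfrak s}$ directly. Next take $|h|>1$. Here I use the triangle inequality:
\[
\int_{B_{2\rho}(x)\setminus B_\rho(x)}|K(x+h,y+h)-K(x,y)|\,\diff y
\le\int_{B_{2\rho}(x+h)\setminus B_\rho(x+h)}K(x+h,y')\,\diff y'+\int_{B_{2\rho}(x)\setminus B_\rho(x)}K(x,y)\,\diff y .
\]
In the first integral I substitute $y'=y+h$, which maps the annulus around $x$ onto the annulus around $x+h$. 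By \eqref{eq:linear-kernel-upper} applied at the points $x+h$ and $x$, the right-hand side is at most $2\Lambda\rho^{-2\mathfrak s}$. This is bounded by $2\Lambda|h|^\alpha\rho^{-2\mathfrak s}$ because $|h|>1$. Combining both regimes yields \eqref{eq:linear-kernel-translation} with constant $\max\{M,2\Lambda\}$.

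For the almost even estimate, the roles are reversed: the parameter is the radius $\rho$. For $\rho\le1$, we have $\rho^{\eta-2\mathfrak s}\le\rho^{\alpha-2\mathfrak s}$ since $\alpha\le\eta$, so the hypothesis already gives the required bound with constant $M$. For $\rho>1$, I again use the triangle inequality and bound the integral by
\[
\int_{B_{2\rho}\setminus B_\rho}K(x,x+z)\,\diff z+\int_{B_{2\rho}\setminus B_\rho}K(x,x-z)\,\diff z .
\]
The substitutions $y=x\pm z$ turn each term into the annulus integral in \eqref{eq:linear-kernel-upper}, so their sum is at most $2\Lambda\rho^{-2\mathfrak s}$. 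Since $\rho>1$ and $\alpha>0$, this is at most $2\Lambda\rho^{\alpha-2\mathfrak s}$. Together the two regimes give \eqref{eq:linear-kernel-almost-even} with constant $\max\{M,2\Lambda\}$, which completes \eqref{eq:kernel-exponent-downgrade}.

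There is no real obstacle in this argument. The only points needing care are the direction of each monotonicity comparison, namely $|h|\le1$ in the first estimate and $\rho\le1$ in the second, and the fact that the substitutions move the annulus to be centred at the translated point $x+h$ or $x\pm z$. This is allowed because \eqref{eq:linear-kernel-upper} is assumed at every point of $\R^n$.
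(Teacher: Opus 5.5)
Your proof is correct and is essentially the paper's argument: the ellipticity conditions are left untouched, and each of the translation and almost even estimates is split at $|h|=1$ (respectively $\rho=1$). In the small regime you use the monotonicity $|h|^\eta\le|h|^\alpha$ (respectively $\rho^{\eta}\le\rho^{\alpha}$), and in the large regime you apply \eqref{eq:linear-kernel-upper} twice to get the bound $2\Lambda$. You also spell out the change of variables that re-centres the annulus at $x+h$ or $x\pm z$, which the paper leaves implicit.
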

	
	\begin{proof}
		The ellipticity conditions are unchanged.  If $|h|\le1$, then
		$|h|^\eta\le|h|^\alpha$, so
		\eqref{eq:linear-kernel-translation} with exponent $\eta$ implies the same
		estimate with exponent $\alpha$.  If $|h|>1$,
		\eqref{eq:linear-kernel-upper} at $x$ and $x+h$ gives
		\[
		\int_{B_{2\rho}(x)\setminus B_\rho(x)}
		|K(x+h,y+h)-K(x,y)|\,\diff y
		\le2\Lambda\rho^{-2\mathfrak s}
		\le2\Lambda|h|^\alpha\rho^{-2\mathfrak s}.
		\]
		Similarly, when $\rho\le1$, \eqref{eq:linear-kernel-almost-even} with exponent $\eta$
		implies the one with exponent $\alpha$.  When $\rho>1$,
		\eqref{eq:linear-kernel-upper}, applied to the two terms, yields
		\[
		\int_{B_{2\rho}\setminus B_\rho}
		|K(x,x+z)-K(x,x-z)|\,\diff z
		\le2\Lambda\rho^{-2\mathfrak s}
		\le2\Lambda\rho^{\alpha-2\mathfrak s}.
		\]
		This proves \eqref{eq:kernel-exponent-downgrade}.
	\end{proof}
	
	\subsection{Uniform affine decay}
	
	We now combine the controlled slope argument with the preceding
	large slope construction. This yields homogeneous affine decay with
	constants independent of the minimizing slope.
	\begin{lemma}
		\label{lem:full-affine-decay-high}
		Let $n\ge2$, $p>2$, $s\in(0,1)$, and assume $sp>p-1$. There exist
		$\sigma\in(0,1)$ and $C\ge1$, depending only on $n,p,s$, such that every
		\(
		v\in W^{s,p}_{\mathrm{loc}}(B_{2r}(x_0))
		\cap L_{sp}^{p-1}(\R^n)
		\cap L^\infty(B_{2r}(x_0))
		\)
		that is a local weak solution of $(-\Delta_p)^sv=0$
		in $B_{2r}(x_0)$ satisfies
		\begin{equation*}
			\Psi_\infty(v;x_0,\rho)
			\le
			C\left(\frac{\rho}{r}\right)^\sigma
			\Psi_\infty(v;x_0,r),
			\qquad
			0<\rho\le\frac r{32}.
		\end{equation*}
	\end{lemma}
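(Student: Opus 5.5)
We normalize and split according to the size of the minimizing slope. By \eqref{eq:affine-excess-scaling} we may assume $x_0=0$, $r=1$ and $E:=\Psi_\infty(v;0,1)=1$ (the case $E=0$ is trivial). By Lemma~\ref{lem:affine-minimizer} there is a minimizing affine function, which after the normalization is $\ell(x)=a\cdot x$. Thus
\[
v=a\cdot x+w,\qquad \|w\|_{L^\infty(B_1)}+\Tail_{m,sp}(w;0,1)=1 .
\]
We fix a large threshold $M=M(n,p,s)\ge1$, to be chosen in the large slope step. If $|a|\le M$, Lemma~\ref{lem:controlled-affine-decay} already gives the claim, even on the larger range $\rho\le1/16$, with $C=C_M$. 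The whole effort is therefore the regime $A:=|a|>M$, where we write $e=a/A$. There the goal is the slope independent bound
\[
\|w\|_{C^{1,\alpha}(B_{1/4})}\le C(n,p,s).
\]
Once this holds, the function $g=v$ with affine function $L(x)=a\cdot x+w(0)+\nabla w(0)\cdot x$ satisfies \eqref{eq:abstract-taylor-bound} with $H=C$: the local part comes from Taylor's formula, and the tail part from the tail of $w$ plus the finite tail of the affine correction, exactly as in the proof of Lemma~\ref{lem:controlled-affine-decay}. Lemma~\ref{lem:taylor-affine-tail} then yields $\Psi_\infty(v;0,\rho)\le C\rho^\sigma$ with $\sigma$ independent of $A$. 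The restriction $\rho\le r/32$ absorbs the halving of radii caused by the localization below.

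For the large slope case we use Lemma~\ref{lem:affine-linearization}. Multiplying \eqref{eq:affine-linearization} by $A^{2-p}$ shows that $w$ solves $\mathcal L_K w=0$ in $B_2$, with
\[
K(x,y)=(p-1)\int_0^1\Bigl|e\cdot\omega+t\,\frac{\delta w(x,y)}{A|x-y|}\Bigr|^{p-2}\diff t\;|x-y|^{-n-2\mathfrak s},\qquad \omega=\frac{x-y}{|x-y|}.
\]
On $B_{2/3}$, Lemma~\ref{lem:power-modulus} gives
\[
|K-K_e^0|\le C\Bigl(\frac{\|\nabla w\|_{L^\infty(B_{2/3})}}{A}\Bigr)^{\kappa}|x-y|^{-n-2\mathfrak s}.
\]
If $\nabla w\in C^{0,\alpha_0}$, the translation and almost even differences of $H=K-K_e^0$ are controlled by $M'|h|^{\eta}|x-y|^{-n-2\mathfrak s}$ with $\eta=\kappa\alpha_0$; this uses $|e\cdot\omega|^{p-2}$ being Lipschitz in $\omega$ away from the degenerate set, or a $\kappa$-H\"older modulus there. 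We then extend $K$ by Lemma~\ref{lem:global-kernel-extension} and lower the exponent by Lemma~\ref{lem:kernel-exponent-downgrade} to some $\alpha<\min\{\eta,2\mathfrak s-1\}$. Next we localize: $W:=\psi w$ with a cutoff $\psi\equiv1$ on $B_{9/16}$ satisfies $\mathcal L_{\widetilde K}W=F$ in $B_{1/2}$, where
\[
F(x)=2\int(1-\psi(y))w(y)\widetilde K(x,y)\,\diff y
\]
for $x\in B_{1/2}$, plus the commutator of $\widetilde K$ with $K$ off $B_{5/8}$. Both pieces are bounded by $\|w\|_{L^\infty(B_1)}$ plus the tail of $w$. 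The tail is taken with respect to the order $2\mathfrak s<sp$, which is smaller, and for $|y|\ge1$ the weight is controlled by the $L^{p-1}$ tail through H\"older's inequality and the factor $|e\cdot\omega+\dots|^{p-2}\lesssim1+(|w(y)|/A|y|)^{p-2}$. Lemma~\ref{lem:fixed-radius-schauder}, after rescaling to $B_{1/2}$, then gives $\|w\|_{C^{1,\alpha}(B_{1/4})}\le C$, with $C$ independent of $A$ provided $\varepsilon\le\varepsilon_*$.

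The main obstacle is closing this loop. The smallness $\varepsilon\sim(\|\nabla w\|_\infty/A)^\kappa$ and the H\"older constant $M'$ require a priori control of $\nabla w$, while Lemma~\ref{lem:homogeneous-C1a} only gives $|\nabla w|\le C(1+A)$, which is not small relative to $A$. The plan is a continuity or bootstrap argument in scales. Lemma~\ref{lem:homogeneous-C1a}, applied on small balls $B_\delta(z)$, gives the $C^{1,\alpha}$ qualitative regularity that Lemma~\ref{lem:fixed-radius-schauder} requires. For the quantitative bound, we first try to show, via the oscillation bound $\operatorname{osc}_{B_1}w\le2$ and the interior estimate applied to $v-\ell$ on balls of radius $\delta\sim A^{-\theta}$, that
\[
\|\nabla w\|_{L^\infty(B_{3/4})}\le C A^{1-\theta'}
\]
for some $\theta'>0$. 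This makes $\varepsilon\lesssim A^{-\kappa\theta'}\le\varepsilon_*$ once $M$ is large. The H\"older seminorm enters the constant $M'$ only polynomially, and we would absorb it by interpolation with the Schauder output, using an estimate of the form $[\nabla w]_{\alpha_0}\le\tfrac12\|w\|_{C^{1,\alpha}}+C$ on nested balls. If this interpolation does not close directly, the fallback is a compactness and contradiction argument: normalize as above and let $A_j\to\infty$. The kernels then converge to $K_e^0$, and the limits solve a translation invariant linear equation covered by Lemma~\ref{lem:affine-model-kernel}, whose $C^{1,\alpha}$ bound contradicts the assumed blow-up.
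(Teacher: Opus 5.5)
The large slope part of your outline matches the paper's proof: linearize with Lemma~\ref{lem:affine-linearization}, compare with $K_e^0$, extend by Lemma~\ref{lem:global-kernel-extension}, apply a cutoff and Lemma~\ref{lem:fixed-radius-schauder}, and finish with Lemma~\ref{lem:taylor-affine-tail}. However, the step you flag as the main obstacle is left open, and it is the heart of the proof.

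\textbf{What is missing.} To use Lemma~\ref{lem:global-kernel-extension} with constants independent of $A$, you need two bounds, both uniform in $A$:
\begin{enumerate}
\item[(i)] smallness of $\|\nabla w\|_{L^\infty(B_{2/3})}/A$, so that $\varepsilon\le\varepsilon_*$;
\item[(ii)] a bound on $[\nabla(w/A)]_{C^{0,\alpha}}$. The quantity $q=\delta w/(A|x-y|)$ is an average of $\nabla(w/A)\cdot\omega$ along the segment, and its translation modulus determines the constant $M'$.
\end{enumerate}

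\textbf{Why your proposed routes do not supply these.}
\begin{enumerate}
\item[(a)] Lemma~\ref{lem:homogeneous-C1a} cannot be applied to $v-\ell$, which is not fractional $p$-harmonic.
\item[(b)] Applying that lemma to $v$ on balls of radius $\delta$ bounds $\nabla v$ by $C(A+\delta^{-1})$, but gives no control of $\nabla v-a$.
\item[(c)] The absorption $[\nabla w]_{\alpha_0}\le\frac12\|w\|_{C^{1,\alpha}}+C$ is circular. The constant of Lemma~\ref{lem:fixed-radius-schauder} depends on $M'$ in an unspecified way, and $M'$ depends on the quantity you are trying to bound.
\item[(d)] The compactness fallback is not carried out. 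Passing to the limit and contradicting the blow-up would require exactly the same uniform regularity of the kernels, that is, uniform $C^{0,\alpha}$ control of $\nabla(w_j/A_j)$.
\end{enumerate}

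\textbf{The missing observation.} Lemma~\ref{lem:homogeneous-C1a} bounds the H\"older seminorm with the same scaling as the gradient. Apply it to $v_A:=v/A=e\cdot x+w/A$ on the balls $B_{1/4}(\xi)$, $\xi\in B_{3/4}$. By homogeneity $v_A$ is fractional $p$-harmonic. It satisfies $\|v_A\|_{L^\infty(B_1)}\le2$ and $\Tail_{p-1,sp}(v_A;0,1)\le C$, where the tail of $e\cdot x$ is finite because $sp>p-1$. The lemma then gives
\[
\|\nabla v_A\|_{L^\infty(B_{3/4})}+[\nabla v_A]_{C^{0,\alpha}(B_{3/4})}\le C
\]
independently of $A$. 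Hence $h_A:=w/A$ satisfies $[\nabla h_A]_{C^{0,\alpha}(B_{3/4})}\le C$. This settles (ii): the translation and almost even constants of $K-K_e^0$ are uniform from the start.

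For (i), combine this with $\|h_A\|_{L^\infty(B_1)}\le A^{-1}$ through a difference quotient with step $t_A=A^{-1/(1+\alpha)}$. For $A$ large the segment stays in $B_{3/4}$, and
\[
|\partial_\nu h_A(x)|\le\frac{2}{At_A}+Ct_A^{\alpha}\le CA^{-\alpha/(1+\alpha)}\qquad(x\in B_{2/3},\ \nu\in\mathbb S^{n-1}).
\]
By Lemma~\ref{lem:power-modulus}, this gives $|K-K_e^0|\le CA^{-\kappa\alpha/(1+\alpha)}|x-y|^{-n-2\mathfrak s}$ on $B_{2/3}$, which is at most $\varepsilon_*$ once the threshold $M$ is large.

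With these two bounds, the rest of your argument closes as in the paper, and no bootstrap or compactness argument is needed.
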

	
	\begin{proof}
		Let $\alpha\in(0,1)$ be the exponent in Lemma~\ref{lem:homogeneous-C1a}. We divide the proof into nine steps.
		
		\smallskip
		\noindent\textbf{Step 1: rescaling.}
		We take $
		E:=\Psi_\infty(v;x_0,r)$. By Lemma~\ref{lem:affine-minimizer}, there exists a minimizing affine function
		\[
		\ell_{a,b}(y):=b+a\cdot(y-x_0).
		\]
		The conclusion is immediate when $E=0$, so assume $E>0$ and define
		\[
		\widehat v(x):=\frac{v(x_0+rx)-b}{rE},
		\qquad
		\widehat a:=\frac{a}{E}.
		\]
		Then $\widehat v$ is fractional $p$-harmonic in $B_2$, and
		\[
		\Psi_\infty(\widehat v;0,\rho)
		=
		\frac{1}{E}\Psi_\infty(v;x_0,r\rho),
		\qquad 0<\rho\le1.
		\]
		Moreover, $\widehat a\cdot x$ minimizes $\Psi_\infty(\widehat v;0,1)$, whence
		\[
		\Psi_\infty(\widehat v;0,1)
		=
		\|\widehat v-\widehat a\cdot x\|_{L^\infty(B_1)}
		+
		\Tail_{p-1,sp}
		(\widehat v-\widehat a\cdot x;0,1)
		=1.
		\]
		Relabeling $(\widehat v,\widehat a)$ as $(v,a)$, we may assume
		\[
		x_0=0,\qquad r=E=1,
		\]
		and
		\begin{equation}\label{eq:large-slope-normalization}
			\|v-a\cdot x\|_{L^\infty(B_1)}
			+
			\Tail_{p-1,sp}(v-a\cdot x;0,1)
			=1.
		\end{equation}
		
		Let $M_0=M_0(n,p,s)\ge 24^{1+\alpha}$ with an additional lower bound to be specified in Step~5. If $|a|\le M_0$, the conclusion follows from Lemma~\ref{lem:controlled-affine-decay}. We may therefore assume
		\[
		A:=|a|>M_0,
		\qquad
		e:=\frac{a}{A}\in\mathbb S^{n-1},
		\qquad
		z:=v-a\cdot x,
		\]
		so that
		\[
		\|z\|_{L^\infty(B_1)}
		+
		\Tail_{p-1,sp}(z;0,1)
		=1.
		\]
		
		\smallskip
		\noindent\textbf{Step 2: uniform estimates after slope rescaling.}
		Let \[
		v_A:=\frac{v}{A}
		=e\cdot x+\frac{z}{A}.
		\]
		By homogeneity, $v_A$ is a local weak solution of
		\[
		(-\Delta_p)^s v_A=0
		\qquad\text{in }B_2.
		\]
		From \eqref{eq:large-slope-normalization} we have
		\[
		\|v_A\|_{L^\infty(B_1)}
		\le
		\|e\cdot x\|_{L^\infty(B_1)}
		+\frac1A\|z\|_{L^\infty(B_1)}
		\le2,
		\]
		and Minkowski's inequality, together with $|e|=1$, gives
		\[
		\Tail_{p-1,sp}(v_A;0,1)
		\le
		\Tail_{p-1,sp}(e\cdot x;0,1)
		+\frac1A\Tail_{p-1,sp}(z;0,1)\le C.
		\]
		Hence
		\begin{equation*}
			\|v_A\|_{L^\infty(B_1)}
			+
			\Tail_{p-1,sp}(v_A;0,1)
			\le C.
		\end{equation*}
		
		We next establish a uniform $C^{1,\alpha}$ estimate on $B_{3/4}$. Fix $\xi\in B_{3/4}$ and define
		\[
		c_\xi:=(v_A)_{B_{1/4}(\xi)}.
		\]
		Since $B_{1/4}(\xi)\subset B_1$ and $\|v_A\|_{L^\infty(B_1)}\le2$, we have
		\[
		|c_\xi|\le2,
		\qquad
		\osc_{B_{1/4}(\xi)}v_A\le4.
		\]
		For $|y|\ge1$ and $\xi\in B_{3/4}$, we have $|y-\xi|\ge |y|/4$. Splitting the tail at $B_1$ therefore gives
		\[
		\begin{aligned}
			\Tail_{p-1,sp}(v_A-c_\xi;\xi,1/4)^{p-1}
			&\le
			C\|v_A-c_\xi\|_{L^\infty(B_1)}^{p-1}
			+
			C\int_{\R^n\setminus B_1}
			\frac{|v_A(y)|^{p-1}+|c_\xi|^{p-1}}
			{|y|^{n+sp}}\,\diff y\\
			&\le C.
		\end{aligned}
		\]
		Thus
		\[
		\mathcal H(v_A;\xi,1/4)\le C
		\qquad
		(\xi\in B_{3/4}),
		\]
		and applying Lemma~\ref{lem:homogeneous-C1a} yields
		\begin{equation}\label{eq:local-C1a-vA}
			\|\nabla v_A\|_{L^\infty(B_{1/8}(\xi))}
			+
			[\nabla v_A]_{C^{0,\alpha}(B_{1/8}(\xi))}
			\le C.
		\end{equation}
		In particular, we have
		\[
		\|\nabla v_A\|_{L^\infty(B_{3/4})}\le C.
		\]
		Moreover, for $x,y\in B_{3/4}$, \eqref{eq:local-C1a-vA} gives
		\[
		|\nabla v_A(x)-\nabla v_A(y)|
		\le
		\begin{cases}
			C|x-y|^\alpha, & |x-y|<1/8,\\[1mm]
			2C \le 2C8^\alpha|x-y|^\alpha, & |x-y|\ge1/8.
		\end{cases}
		\]
		Combining these two bounds yields
		\begin{equation}\label{eq:large-slope-C1a}
			\|\nabla v_A\|_{L^\infty(B_{3/4})}
			+
			[\nabla v_A]_{C^{0,\alpha}(B_{3/4})}
			\le C.
		\end{equation}
		
		We next estimate the gradient of the normalized remainder. Set
\[
h_A:=\frac{z}{A} =v_A-e\cdot x,
		\qquad
		\tau:=\frac{\alpha}{1+\alpha},
		\qquad
		t_A:=A^{-1/(1+\alpha)}.
		\]
		From \eqref{eq:large-slope-normalization} and \eqref{eq:large-slope-C1a} we have
		\[
		\|h_A\|_{L^\infty(B_{3/4})}
		\le A^{-1},
		\qquad
		[\nabla h_A]_{C^{0,\alpha}(B_{3/4})}
		\le C.
		\]
		Since $M_0\ge24^{1+\alpha}$ and $A>M_0$, we have $t_A\le1/24$, so that for every $x\in B_{2/3}$ and $\nu\in\mathbb S^{n-1}$, the segment $[x,x+t_A\nu]$ is contained in $B_{3/4}$. By the fundamental theorem of calculus,
		\[
		\begin{aligned}
			\left|
			\partial_\nu h_A(x)
			-
			\frac{h_A(x+t_A\nu)-h_A(x)}{t_A}
			\right|
			&=
			\left|
			\frac1{t_A}
			\int_0^{t_A}
			\left[
			\partial_\nu h_A(x)
			-
			\partial_\nu h_A(x+s\nu)
			\right]\,\diff s
			\right|\\
			&\le
			\frac{C}{t_A}
			\int_0^{t_A}s^\alpha\,\diff s
			\le
			Ct_A^\alpha.
		\end{aligned}
		\]
		Using $\|h_A\|_{L^\infty(B_{3/4})}\le A^{-1}$ together with the definitions of $t_A$ and $\tau$, we obtain
		\begin{equation*}
			|\partial_\nu h_A(x)|
			\le
			\frac{|h_A(x+t_A\nu)-h_A(x)|}{t_A}
			+
			Ct_A^\alpha\le
			\frac{2}{At_A}
			+
			Ct_A^\alpha\le  CA^{-\tau},
		\end{equation*}
		and taking the supremum over $\nu\in\mathbb S^{n-1}$ gives
		\begin{equation*}
			\|\nabla h_A\|_{L^\infty(B_{2/3})}
			=
			\left\|
			\nabla\left(\frac{z}{A}\right)
			\right\|_{L^\infty(B_{2/3})}
			=
			\|\nabla v_A-e\|_{L^\infty(B_{2/3})}
			\le
			CA^{-\tau}.
		\end{equation*}
		Finally, for $x,y\in B_{2/3}$ with $x\ne y$, convexity of $B_{2/3}$ yields
		\[
		\frac{\delta z(x,y)}{A}
		=
		h_A(x)-h_A(y)
		=
		\int_0^1
		\nabla h_A\bigl(y+t(x-y)\bigr)
		\cdot(x-y)\,\diff t,
		\]
		and therefore
		\begin{equation}\label{eq:qA-small}
			\left|
			\frac{\delta z(x,y)}
			{A|x-y|}
			\right|
			\le
			\|\nabla h_A\|_{L^\infty(B_{2/3})}
			\le
			CA^{-\tau}.
		\end{equation}
		
		\smallskip
		\noindent\textbf{Step 3: the equation for $z$.}
		Applying Lemma~\ref{lem:affine-linearization} gives, for every $\varphi\in C_c^\infty(B_2)$,
		\begin{equation*}
			\iint_{\R^n\times\R^n}
			\delta z(x,y)\delta\varphi(x,y)K_A(x,y)\,\diff x\diff y=0,
		\end{equation*}
		with
		\begin{equation}\label{eq:large-slope-kernel}
			K_A(x,y)
			:=
			A^{2-p}(p-1)
			\left(
			\int_0^1
			|a\cdot(x-y)+t\delta z(x,y)|^{p-2}\diff t
			\right)
			|x-y|^{-n-sp}.
		\end{equation}
		Set
		\[
		2\mathfrak s:=sp-p+2=1+\beta\in(1,2).
		\]
		For $x\ne y$, define
		\[
		\omega:=\frac{x-y}{|x-y|},
		\qquad
		q_A(x,y):=\frac{\delta z(x,y)}{A|x-y|}.
		\]
		Since $a=Ae$, we have
		\begin{equation*}
			K_A(x,y)
			=
			k_A(x,y)|x-y|^{-n-2\mathfrak s},
		\end{equation*}
		where
		\begin{equation*}
			k_A(x,y)
			:=
			(p-1)\int_0^1
			|e\cdot\omega+tq_A(x,y)|^{p-2}\diff t.
		\end{equation*}
		
		\smallskip
		\noindent\textbf{Step 4: estimates for $K_A$.}
		Set
		\[
		\kappa:=\min\{1,p-2\},
		\qquad
		\eta:=\frac{\alpha\kappa}{2}.
		\]
		For $t\in[0,1]$, define
		\[
		\Theta_t(x,y)
		:=
		e\cdot\omega+tq_A(x,y)
		=
		(1-t)e\cdot\omega
		+
		t\frac{v_A(x)-v_A(y)}{|x-y|}.
		\]
		For $x,y\in B_{2/3}$,
		\[
		\frac{v_A(x)-v_A(y)}{|x-y|}
		=
		\int_0^1
		\nabla v_A(y+\lambda(x-y))
		\cdot\omega\,\diff\lambda,
		\]
		and consequently
		\begin{equation*}
			|\Theta_t(x,y)|\le C,
			\qquad
			0\le k_A(x,y)\le C.
		\end{equation*}
		If $x,y,x+h,y+h\in B_{2/3}$, then
		\[
		q_A(x,y)
		=\frac{\delta z(x,y)}{A|x-y|}=\frac{h_A(x)-h_A(y)}{|x-y|}
		=
		\int_0^1
		\nabla h_A\bigl(y+\lambda(x-y)\bigr)
		\cdot\omega\,\diff\lambda,
		\]
		and hence
		\begin{equation*}
			\begin{split}
				|q_A(x+h,y+h)-q_A(x,y)|
				&\le
				\int_0^1
				\left|
				\nabla h_A\bigl(y+h+\lambda(x-y)\bigr)
				\cdot\omega-
				\nabla h_A\bigl(y+\lambda(x-y)\bigr)
				\cdot\omega
				\right|\diff\lambda\\
				&\le C|h|^\alpha.
			\end{split}
		\end{equation*}
		Combining this with Lemma~\ref{lem:power-modulus} yields
		\[
		|k_A(x+h,y+h)-k_A(x,y)|
		\le C|h|^{\alpha\kappa}
		\le C|h|^\eta,
		\]
		where the last inequality follows from $|h|<4/3$ and $\eta=\alpha\kappa/2$. Consequently,
		\begin{equation}\label{eq:KA-pointwise-translation}
			|K_A(x+h,y+h)-K_A(x,y)|
			\le
			C|h|^\eta|x-y|^{-n-2\mathfrak s}.
		\end{equation}
		Next, for $x,x+\zeta,x-\zeta\in B_{2/3}$ with $\zeta\ne0$, symmetry of $K_A$ and \eqref{eq:KA-pointwise-translation} applied to the pair $(x-\zeta,x)$ with shift $h=\zeta$ give
		\begin{equation}\label{eq:KA-pointwise-even}
			\begin{aligned}
				|K_A(x,x+\zeta)-K_A(x,x-\zeta)|
				&=
				|K_A((x-\zeta)+\zeta,x+\zeta)-K_A(x-\zeta,x)|\\
				&\le C|\zeta|^{\eta-n-2\mathfrak s}.
			\end{aligned}
		\end{equation}
		
		\smallskip
		\noindent\textbf{Step 5: comparison with $K_e^0$ and extension.}
		Let $K_0:=K_e^0$. For $x,y\in B_{2/3}$, $x\ne y$, and $t\in[0,1]$, \eqref{eq:qA-small} gives
		\[
		|\Theta_t(x,y)-e\cdot\omega|
		=
		t|q_A(x,y)|=\frac{t|\delta z(x,y)|}{A|x-y|}
		\le
		CA^{-\tau}.
		\]
		Lemma~\ref{lem:power-modulus} then yields
		\begin{equation*}
			\begin{split}
				|K_A(x,y)-K_0(x,y)|
				&\le (p-1)\int_0^{1}\left|\left|e\cdot \omega +tq_A(x,y)\right|^{p-2}-\left|e\cdot \omega\right|^{p-2}\right| \diff t \, |x-y|^{-n-2\mathfrak s}\\
				&\le CA^{-\tau\kappa}|x-y|^{-n-2\mathfrak s}
				=:\varepsilon_A|x-y|^{-n-2\mathfrak s},
			\end{split}
		\end{equation*}
		for all $x,y\in B_{2/3}$, $x\ne y$.
		
		Now set
		\[
		H:=K_A-K_0
		\qquad
		\text{on }
		(B_{2/3}\times B_{2/3})\setminus\{x=y\}.
		\]
		Since
		\[
		K_0(x+h,y+h)=K_0(x,y),
		\qquad
		K_0(x,x+\zeta)=K_0(x,x-\zeta),
		\]
		\eqref{eq:KA-pointwise-translation} and \eqref{eq:KA-pointwise-even} give
		\begin{align*}
			|H(x+h,y+h)-H(x,y)|
			&\le
			C|h|^\eta|x-y|^{-n-2\mathfrak s}
			\qquad
			(x,y,x+h,y+h\in B_{2/3}),
		\end{align*}
		and
		\begin{align*}
			|H(x,x+\zeta)-H(x,x-\zeta)|
			&\le
			C|\zeta|^{\eta-n-2\mathfrak s}
			\qquad
			(x,x+\zeta,x-\zeta\in B_{2/3}).
		\end{align*}
		Let $\varepsilon_*$ be the constant in Lemma~\ref{lem:global-kernel-extension}, and fix $M_0$, depending only on $n,p,s$, so that
		\begin{equation*}
			M_0\ge24^{1+\alpha},
			\qquad
			CM_0^{-\tau\kappa}\le\varepsilon_*.
		\end{equation*}
		By definition, $K_A$ is nonnegative, measurable, and symmetric, and since $A>M_0$,
		\[
		\varepsilon_A
		\le
		CM_0^{-\tau\kappa}
		\le
		\varepsilon_*.
		\]
		Thus the assumptions of Lemma~\ref{lem:global-kernel-extension} are satisfied with
		\[
		K=K_A,
		\qquad
		H=K_A-K_0,
		\qquad
		\varepsilon=\varepsilon_A,
		\qquad
		M=C,
		\]
		and consequently there exists a nonnegative symmetric kernel $\widetilde K$ such that
		\begin{equation}\label{eq:Ktilde-class}
			\widetilde K
			\in
			\mathscr K
			(2\mathfrak s,\eta;\lambda_0/2,\Lambda_*,M_*),
		\end{equation}
		where $\Lambda_*$ and $M_*$ depend only on $n,p,s$, and
		\begin{align*}
			\widetilde K&=K_A
			&&\text{on }B_{5/8}\times B_{5/8},
			\\
			|\widetilde K(x,y)-K_0(x,y)|
			&\le
			\varepsilon_A|x-y|^{-n-2\mathfrak s}
			&&(x\ne y).
		\end{align*}
		In particular, we have
		\begin{equation}\label{eq:Ktilde-pointwise-upper}
			0\le
			\widetilde K(x,y)
			\le
			C|x-y|^{-n-2\mathfrak s},
			\qquad
			x\ne y.
		\end{equation}
		
		\smallskip
		\noindent\textbf{Step 6: cutoff of the remainder.}
		Set $\bar z:=z-z(0)$, choose $\zeta\in C_c^\infty(B_{5/8})$ with $0\le\zeta\le1$ and $\zeta\equiv1$ on $B_{9/16}$, and define
		\begin{equation*}
			W:=\zeta\bar z.
		\end{equation*}
		For each fixed $A$, \eqref{eq:large-slope-C1a} gives
		\[
		z\in C_{\mathrm{loc}}^{1,\alpha}(B_{3/4}),
		\qquad
		\supp W\Subset B_{5/8},
		\qquad
		\|W\|_{L^\infty(\R^n)}\le2,
		\]
		and, for some $C_A<\infty$,
		\[
		|W(x)-W(y)|^2
		\le
		C_A(|x-y|^2\wedge1).
		\]
		Using \eqref{eq:Ktilde-pointwise-upper}, we obtain
		\begin{align*}
			\iint_{\mathcal Q(B_{1/2})}
			|W(x)-W(y)|^2
			\widetilde K(x,y)\,\diff x\diff y
			&\le
			2C_A
			\int_{B_{1/2}}\int_{\R^n}
			\frac{|x-y|^2\wedge1}
			{|x-y|^{n+2\mathfrak s}}
			\,\diff y\diff x
			<\infty,
		\end{align*}
		so that $W\in\mathcal H_{\widetilde K}(B_{1/2})$.
		
		For $\varphi\in C_c^\infty(B_{1/2})$ and $x\in\supp\varphi$,
		\[
		\delta W(x,y)
		=
		\delta\bar z(x,y)
		+
		(1-\zeta(y))\bar z(y).
		\]
		For $0<\varepsilon<1$, symmetry on the set
		\[
		\{(x,y):\varepsilon<|x-y|<\varepsilon^{-1}\}
		\]
		gives
		\begin{align*}
			\mathcal E_{\widetilde K}(W,\varphi)
			=
			2\lim_{\varepsilon\to0}
			\int_{B_{1/2}}\varphi(x)
			\int_{\{\varepsilon<|x-y|<\varepsilon^{-1}\}}
			\delta W(x,y)\widetilde K(x,y)\,\diff y\diff x.
		\end{align*}
	By Lemma~\ref{lem:affine-linearization} and
	$\delta\bar z=\delta z$, we have
	\[
	\mathcal E_{K_A}(\bar z,\varphi)=0,
	\]
	with an absolutely convergent pairing. Hence, by symmetry,
	\[
	0
	=
	2\lim_{\varepsilon\to0}
	\int_{B_{1/2}}\varphi(x)
	\int_{\{\varepsilon<|x-y|<\varepsilon^{-1}\}}
	\delta\bar z(x,y)K_A(x,y)\,\diff y\diff x.
	\]
		Subtracting this identity from the expression for $\mathcal E_{\widetilde K}(W,\varphi)$ gives
		\begin{align*}
			\mathcal E_{\widetilde K}(W,\varphi)
			&=
			2\lim_{\varepsilon\to 0}
			\int_{B_{1/2}}\varphi(x)
			\int_{\{\varepsilon<|x-y|<\varepsilon^{-1}\}}
			\Bigl\{
			\delta\bar z(x,y)
			[\widetilde K(x,y)-K_A(x,y)]\\
			&\hspace{49mm}
			+
			(1-\zeta(y))\bar z(y)\widetilde K(x,y)
			\Bigr\}
			\,\diff y\diff x.
		\end{align*}
		For $0<\varepsilon<1$ and $x\in B_{1/2}$, set
		\[
		\begin{aligned}
			F_\varepsilon(x)
			:=
			2\int_{\{\varepsilon<|x-y|<\varepsilon^{-1}\}}
			\Bigl\{
			\delta\bar z(x,y)
			\bigl[\widetilde K(x,y)-K_A(x,y)\bigr]+
			(1-\zeta(y))\bar z(y)\widetilde K(x,y)
			\Bigr\}\,\diff y.
		\end{aligned}
		\]
		Then the preceding identity becomes
		\begin{equation}\label{eq:truncated-localized-equation}
			\mathcal E_{\widetilde K}(W,\varphi)
			=
			\lim_{\varepsilon\to 0}
			\int_{B_{1/2}}F_\varepsilon(x)\varphi(x)\,\diff x.
		\end{equation}
		Note that if $x\in B_{1/2}$ and $y\in B_{5/8}$, then $\widetilde K(x,y)=K_A(x,y)$. Similarly, if $y\in B_{9/16}$, then $\zeta(y)=1$. Accordingly, for $x\in B_{1/2}$,  we define
		\begin{equation*}
			\begin{aligned}
				F(x)&:=F_1(x)+F_2(x)\\
				&=
				2\int_{\R^n\setminus B_{5/8}}
				\delta\bar z(x,y)
				[\widetilde K(x,y)-K_A(x,y)]
				\,\diff y
				+
				2\int_{\R^n\setminus B_{9/16}}
				(1-\zeta(y))\bar z(y)
				\widetilde K(x,y)\,\diff y.
			\end{aligned}
		\end{equation*}
	
		\smallskip
		\noindent\textbf{Step 7: estimate of \(F\).}
		We show that the two terms in \(F=F_1+F_2\) are uniformly bounded in
		\(B_{1/2}\). The same estimates will also provide the integrable bounds
		needed to remove the truncation in \(F_\varepsilon\).
		
		From \eqref{eq:large-slope-kernel} and
		\[
		|X+Y|^{p-2}
		\le
		C\bigl(|X|^{p-2}+|Y|^{p-2}\bigr),
		\]
		we obtain
		\begin{equation}\label{eq:KA-upper-localization}
			K_A(x,y)
			\le
			C|x-y|^{-n-1-\beta}
			+
			CA^{2-p}
			\frac{|\delta z(x,y)|^{p-2}}
			{|x-y|^{n+sp}}.
		\end{equation}
		Hence
		\begin{equation}\label{eq:localized-kernel-basic}
			K_A(x,y)|\delta z(x,y)|
			\le
			C\frac{|\delta z(x,y)|}
			{|x-y|^{n+1+\beta}}
			+
			CA^{2-p}
			\frac{|\delta z(x,y)|^{p-1}}
			{|x-y|^{n+sp}}.
		\end{equation}
		Moreover, by the construction of \(\widetilde K\),
		\[
		0\le\widetilde K\le K_A+K_0,
		\qquad
		|\widetilde K-K_A|\le K_A+K_0,
		\]
		and
		\[
		K_0(x,y)\le C|x-y|^{-n-1-\beta}.
		\]
	
	We first  prove two tail bounds that will be used below. Since
		\(|z(0)|\le1\), \eqref{eq:large-slope-normalization} gives
		\begin{align}	\label{eq:bar-z-p-tail}
			\int_{|y|>1}
			\frac{|\bar z(y)|^{p-1}}
			{|y|^{n+sp}}\,\diff y
			&\le
			C\Tail_{p-1,sp}(z;0,1)^{p-1}
			+
			C|z(0)|^{p-1}
			\int_{|y|>1}|y|^{-n-sp}\,\diff y
			\nonumber\\
			&\le C.
		\end{align}
		Since \(sp=p-1+\beta\), H\"older's inequality then yields
		\begin{align}	\label{eq:linear-weighted-tail}
			\int_{|y|>1}
			\frac{|\bar z(y)|}
			{|y|^{n+1+\beta}}\,\diff y
			&\le
			\left(
			\int_{|y|>1}
			\frac{|\bar z(y)|^{p-1}}
			{|y|^{n+sp}}\,\diff y
			\right)^{1/(p-1)}
			\left(
			\int_{|y|>1}
			|y|^{-n-\beta}\,\diff y
			\right)^{(p-2)/(p-1)}
			\nonumber\\
			&\le C.
		\end{align}
		
		Fix \(x\in B_{1/2}\). We have
		\[
		|\bar z(x)|\le2,
		\qquad
		|\bar z(y)|\le2
		\quad (y\in B_1),
		\qquad
		A^{2-p}\le1.
		\]
		Moreover,   we have  $
		|x-y|\ge\frac18$  for $y\in B_1\setminus B_{5/8}$,	and
	$	|x-y|\ge\frac1{16}$ for $y\in B_1\setminus B_{9/16}$.
		If \(|y|>1\), then $
		|x-y|
		\ge |y|-|x|
		\ge\frac12|y|$.

		Using \(\delta z=\delta\bar z\),
		\eqref{eq:localized-kernel-basic}, and
		\(|\widetilde K-K_A|\le K_A+K_0\), we obtain from \eqref{eq:bar-z-p-tail} and \eqref{eq:linear-weighted-tail} that
		\begin{align*}
			|F_1(x)|& \leq 2\int_{\R^n\setminus B_{5/8}}
			|\delta\bar z(x,y)|
			\,|\widetilde K(x,y)-K_A(x,y)|\,\diff y\\
			&\le
			C\int_{\R^n\setminus B_{5/8}}
			\frac{|\bar z(x)-\bar z(y)|}
			{|x-y|^{n+1+\beta}}\,\diff y
			+
			CA^{2-p}
			\int_{\R^n\setminus B_{5/8}}
			\frac{|\bar z(x)-\bar z(y)|^{p-1}}
			{|x-y|^{n+sp}}\,\diff y\\
			&\le
			C
			+
			C\int_{|y|>1}
			\frac{|\bar z(y)|}
			{|y|^{n+1+\beta}}\,\diff y
			+
			C\int_{|y|>1}
			\frac{|\bar z(y)|^{p-1}}
			{|y|^{n+sp}}\,\diff y
			\le C.
		\end{align*}
		Similarly, by \eqref{eq:KA-upper-localization} and
		\(0\le\widetilde K\le K_A+K_0\), we have
		\begin{align*}
		|F_2(x)|&\leq 2\int_{\R^n\setminus B_{9/16}}
			(1-\zeta(y))|\bar z(y)|
			\widetilde K(x,y)\,\diff y\\
			&\le
			C\int_{\R^n\setminus B_{9/16}}
			\frac{|\bar z(y)|}
			{|x-y|^{n+1+\beta}}\,\diff y+
			CA^{2-p}
			\int_{\R^n\setminus B_{9/16}}
			\frac{
				|\bar z(y)|
				|\bar z(x)-\bar z(y)|^{p-2}}
			{|x-y|^{n+sp}}\,\diff y.
		\end{align*}
		Since
		\[
		|\bar z(y)|
		|\bar z(x)-\bar z(y)|^{p-2}
		\le
		C\bigl(
		|\bar z(y)|^{p-1}
		+
		|\bar z(x)|^{p-1}
		\bigr),
		\]
it follows from \eqref{eq:bar-z-p-tail} and \eqref{eq:linear-weighted-tail}  that 
		\begin{align*}
			|F_2(x)| \le
			C
			+
			C\int_{|y|>1}
			\frac{|\bar z(y)|}
			{|y|^{n+1+\beta}}\,\diff y
			+
			C\int_{|y|>1}
			\frac{
				|\bar z(y)|^{p-1}
				+
				|\bar z(x)|^{p-1}}
			{|y|^{n+sp}}\,\diff y
			\le C.
		\end{align*}
	
The two estimates above are uniform for \(x\in B_{1/2}\) and, more
		importantly, they bound the absolute values of the two integrands defining
		\(F\). Hence
		\begin{equation}\label{eq:F-uniform-bound}
			|F_\varepsilon(x)|+|F(x)|
			\le C
			\qquad
			(x\in B_{1/2},\ 0<\varepsilon<1).
		\end{equation}
		For each fixed \(x\in B_{1/2}\), the truncated integrand defining
		\(F_\varepsilon(x)\) converges pointwise to the integrand defining \(F(x)\).
		The two estimates above provide an integrable bound in \(y\), independent of
		\(\varepsilon\). Therefore, by the dominated convergence theorem,
		\[
		F_\varepsilon(x)\longrightarrow F(x)
		\qquad
		\text{as }\varepsilon\to0.
		\]
		
		Finally, \eqref{eq:F-uniform-bound} gives
		\[
		|F_\varepsilon(x)\varphi(x)|
		\le C|\varphi(x)|.
		\]
	Applying the dominated convergence theorem in \eqref{eq:truncated-localized-equation}, we obtain
		\begin{equation}\label{eq:localized-equation}
			\mathcal E_{\widetilde K}(W,\varphi)
			=
			\int_{B_{1/2}}F(x)\varphi(x)\,\diff x
			\qquad
			\text{for every }\varphi\in C_c^\infty(B_{1/2}).
		\end{equation}
		\smallskip
		\noindent\textbf{Step 8: the Schauder estimate.}
		We take $
		\alpha_0
		:=
		\frac12\min\{\alpha,\eta,\beta\}$.
		Since $2\mathfrak s=1+\beta\in(1,2)$ and $n\ge2$, we have
		\[
		n>2\mathfrak s,
		\qquad
		0<\alpha_0<2\mathfrak s-1,
		\qquad
		q_{\alpha_0}
		=
		\frac{n}{2\mathfrak s-(1+\alpha_0)}
		=
		\frac{n}{\beta-\alpha_0}.
		\]
		By Lemma~\ref{lem:kernel-exponent-downgrade} and \eqref{eq:Ktilde-class},
		\[
		\widetilde K
		\in
		\mathscr K
		(2\mathfrak s,\alpha_0;
		\lambda_0/2,\Lambda_*,M_*),
		\]
		after replacing $M_*$ by $\max\{M_*,2\Lambda_*\}$. Furthermore,
		\[
		W\in
		\mathcal H_{\widetilde K}(B_{1/2})
		\cap
		C_{\mathrm{loc}}^{1,\alpha_0}(B_{1/2})
		\cap
		L^\infty(\R^n),
		\quad
		\|W\|_{L^\infty(\R^n)}
		\le2,
		\quad
		\|F\|_{L^{q_{\alpha_0}}(B_{1/2})}
		\le C.
		\]
		For each fixed $A$, Step~6 supplies the required a priori regularity.
        The kernel parameters and the two displayed norms are bounded
        independently of $A$. The Schauder estimate therefore gives a bound
        independent of $A$. Since $W=\bar z$ on $B_{9/16}$,
        applying Lemma~\ref{lem:fixed-radius-schauder} to
        \eqref{eq:localized-equation} gives
		\begin{equation}\label{eq:large-slope-remainder-C1a}
			\|\nabla z\|_{L^\infty(B_{1/4})}
			+
			[\nabla z]_{C^{0,\alpha_0}(B_{1/4})}
			\le C.
		\end{equation}
		
		\smallskip
		\noindent\textbf{Step 9: affine decay.}
		Let
		\[
		L(x):=z(0)+\nabla z(0)\cdot x.
		\]
		For $x\in B_{1/4}$, \eqref{eq:large-slope-remainder-C1a} and the fundamental theorem of calculus give
		\begin{align}
			\label{eq:large-slope-taylor}
			|z(x)-L(x)|
			=
			\left|
			\int_0^1
			\bigl[\nabla z(tx)-\nabla z(0)\bigr]
			\cdot x\,\diff t
			\right|\le
			C|x|^{1+\alpha_0}.
		\end{align}
		Moreover, $|z(0)|\le1$ and $|\nabla z(0)|\le C$, so
		\[
		|z(y)-L(y)|^{p-1}
		\le
		C\bigl(|z(y)|^{p-1}+1+|y|^{p-1}\bigr).
		\]
		Since $z$ and $L$ are bounded on $B_1\setminus B_{1/4}$, \eqref{eq:large-slope-normalization} yields
		\begin{align}
			\int_{\R^n\setminus B_{1/4}}
			\frac{|z(y)-L(y)|^{p-1}}
			{|y|^{n+sp}}\,\diff y
			\le
			C.
			\label{eq:large-slope-far-tail}
		\end{align}
		The bounds \eqref{eq:large-slope-taylor} and
        \eqref{eq:large-slope-far-tail} verify the hypotheses of
        Lemma~\ref{lem:taylor-affine-tail} for $g=z$, $H=C$, and
        $\alpha=\alpha_0$. Take
		\[
		\sigma_\ell
		:=
		\frac12
		\min\left\{
		\alpha_0,\frac{\beta}{p-1}
		\right\}>0.
		\]
		Then
		\[
		\frac1\rho
		\|z-L\|_{L^\infty(B_\rho)}
		+
		\frac1\rho
		\Tail_{p-1,sp}(z-L;0,\rho)
		\le
		C\rho^{\sigma_\ell},
		\qquad
		0<\rho\le\frac1{16}.
		\]
		Since
		\[
		v-\bigl(a\cdot x+L(x)\bigr)=z-L
		\]
		and $a\cdot x+L(x)$ is affine, the definition of $\Psi_\infty$ gives
		\[
		\Psi_\infty(v;0,\rho)
		\le
		C\rho^{\sigma_\ell},
		\qquad
		0<\rho\le\frac1{16}.
		\]
		Let $\sigma_c$ be the exponent in Lemma~\ref{lem:controlled-affine-decay} with $M=M_0$, and set
		\[
		\sigma:=\min\{\sigma_c,\sigma_\ell\}.
		\]
		The controlled slope and large slope estimates therefore combine to yield
		\[
		\Psi_\infty(v;0,\rho)
		\le
		C\rho^\sigma,
		\qquad
		0<\rho\le\frac1{16}.
		\]
		Finally, \eqref{eq:affine-excess-scaling} gives
		\[
		\Psi_\infty(v;x_0,\rho)
		\le
		C\left(\frac{\rho}{r}\right)^\sigma
		\Psi_\infty(v;x_0,r),
		\qquad
		0<\rho\le\frac{r}{32}.
		\]
	\end{proof}
	
	\subsection{Affine decay in \texorpdfstring{$L^{p-1}$}{L(p-1)} average}
	
	We next pass from the uniform affine decay in
    Lemma~\ref{lem:full-affine-decay-high} to decay for the averaged excess
    \eqref{eq:mean-affine-excess}. The nonlocal tail remains unchanged.
    The task is to replace the local supremum by an integral  average.
	
	\begin{lemma}\label{lem:mean-homogeneous-decay}
		Let
		$v\in W^{s,p}_{\mathrm{loc}}(B_{2r}(x_0))\cap L_{sp}^m(\R^n)
		\cap L^\infty(B_{2r}(x_0))$
		be a local weak solution of $(-\Delta_p)^sv=0$ in $B_{2r}(x_0)$.
		There exist $\sigma\in(0,1)$ and $C\ge1$, depending only on $n,p,s$,
		such that
		\begin{equation}\label{eq:mean-homogeneous-decay}
			\Psi_m(v;x_0,\rho)
			\le C\left(\frac\rho r\right)^\sigma\Psi_m(v;x_0,r),
			\qquad 0<\rho\le r/64.
		\end{equation}
	\end{lemma}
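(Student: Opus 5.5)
The decay \eqref{eq:mean-homogeneous-decay} will come from Lemma~\ref{lem:full-affine-decay-high} together with two elementary comparisons, $\Psi_m\le\Psi_\infty$ at the small scale and a reverse bound $\Psi_\infty(v;x_0,r/2)\le C\Psi_m(v;x_0,r)$ at the large scale. The reverse bound rests on the local boundedness estimate of Lemma~\ref{lem:local-boundedness}, applied after subtracting the $\Psi_m$-minimizing affine function. After the scaling \eqref{eq:affine-excess-scaling} we may take $x_0=0$ and $r=1$. Let $\ell=\ell_{a,b}$ minimize $\Psi_m(v;0,1)$, which is possible by Lemma~\ref{lem:affine-minimizer}, and set $E:=\Psi_m(v;0,1)$ and $z:=v-\ell$. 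Then
\[
\Bigl(\fint_{B_1}|z|^m\,\diff x\Bigr)^{1/m}+\Tail_{m,sp}(z;0,1)=E .
\]

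\textbf{Step 1: an $L^\infty$ bound for $z$ on $B_{1/2}$.} This is the step I expect to be the main obstacle, for two reasons. First, Lemma~\ref{lem:local-boundedness} is stated for replacements $v\in W^{s,p}(\R^n)$, whereas here $v$ is only a local weak solution. Second, it involves the $L^p$ average of $v-c$ with a \emph{constant} $c$, not an affine function, and with exponent $p$ rather than $m=p-1$.

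The first issue is handled by noting that the De Giorgi argument of \cite{DiCastroKuusiPalatucci2016} is local and only needs $v\in W^{s,p}_{\mathrm{loc}}\cap L^{m}_{sp}$. That argument also yields the standard sharper form with any exponent, in particular
\[
\sup_{B_{\rho/2}(\xi)}|v-c|\le \delta\,\Tail_{m,sp}(v-c;\xi,\rho/2)+C_\delta\Bigl(\fint_{B_\rho(\xi)}|v-c|^{m}\Bigr)^{1/m}.
\]
I would derive this from the $L^p$ version by the usual interpolation and covering (iteration) lemma, using the $\delta$-weighted tail.

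For the affine issue, cover $B_{1/2}$ by balls $B_{1/16}(\xi)$ and apply this bound on each of them with $c:=\ell(\xi)$. On such a ball $|\ell-\ell(\xi)|\le |a|/8$. The slope is controlled by the mean average: comparing $\ell$ with the optimal constant and using \eqref{eq:mean-affine-norm} is not enough, since $|a|$ can be huge relative to $E$. So, instead of a constant, I would run the local boundedness argument directly for $z$. Since $v$ and $\ell$ are both fractional $p$-harmonic (cf.\ \eqref{eq:affine-weak-harmonic}), Lemma~\ref{lem:affine-linearization} shows that $z$ solves a linear equation with the nonnegative symmetric kernel $\mathbb A_{a,z}$. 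That kernel degenerates, however, which is exactly the difficulty of the large slope regime.

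A cleaner route is to avoid $L^\infty$ bounds for $z$ altogether and interpolate. By Step~2 of Lemma~\ref{lem:full-affine-decay-high}, scaled appropriately, one has $C^{1,\alpha}$ control. Then $z$ is Lipschitz on $B_{3/4}$ with constant $\Lambda$, and $\|z\|_{L^\infty(B_{1/2})}\le C\bigl(E+\Lambda^{n/(n+m)}E^{m/(n+m)}\bigr)$. What remains is to bound $\Lambda$ by $C\,\|z\|_{L^\infty}$-type quantities, using the Schauder estimate \eqref{eq:large-slope-remainder-C1a} in the large slope case and Lemma~\ref{lem:homogeneous-C1a} in the controlled case, and then to absorb. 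Both cases scale homogeneously in the normalization $E_\infty:=\Psi_\infty(v;0,1/2)$ and give $\Lambda\le C E_\infty$. Consequently
\[
E_\infty\le C E+CE_\infty^{n/(n+m)}E^{m/(n+m)},
\]
and Young's inequality yields $E_\infty\le CE$. The tail of $z$ at radius $1/2$ is bounded by $C\bigl(\Tail_{m,sp}(z;0,1)+\|z\|_{L^m(B_1)}\bigr)\le CE$, as in the proof of Lemma~\ref{lem:local-boundedness}. Together these give $\Psi_\infty(v;0,1/2)\le CE$.

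\textbf{Step 2: conclusion.} For $\rho\le 1/64=(1/2)/32$, Lemma~\ref{lem:full-affine-decay-high} applied on $B_{1/2}$ (where $v$ is a local weak solution in $B_1\subset B_2$) gives
\[
\Psi_m(v;0,\rho)\le\Psi_\infty(v;0,\rho)\le C(2\rho)^\sigma\Psi_\infty(v;0,1/2)\le C\rho^\sigma E .
\]
Undoing the scaling yields \eqref{eq:mean-homogeneous-decay}, with $\sigma$ the exponent of Lemma~\ref{lem:full-affine-decay-high}.
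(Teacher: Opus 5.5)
Your overall architecture is the paper's. You rescale to $x_0=0$, $r=1$, prove the reverse bound $\Psi_\infty(v;0,1/2)\le C\Psi_m(v;0,1)$, and then combine $\Psi_m\le\Psi_\infty$ with Lemma~\ref{lem:full-affine-decay-high} at initial radius $1/2$. Your Step~2 is exactly the paper's last step.

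The gap is in how you close Step~1, namely the claim $\Lambda\le CE_\infty$, with $\Lambda$ the Lipschitz constant of $z=v-\ell$ on $B_{3/4}$ and $E_\infty=\Psi_\infty(v;0,1/2)$. The Schauder estimate \eqref{eq:large-slope-remainder-C1a} and Lemma~\ref{lem:homogeneous-C1a} give something weaker. After normalizing at a centre $\xi$ and radius $d$, and using $\ell$ as a competitor, they yield only the pointwise bound
\[
|\nabla z(\xi)|\le \frac Cd\Bigl[\|z\|_{L^\infty(B_d(\xi))}+\Tail_{m,sp}(z;\xi,d)\Bigr].
\]
The paper gets the same bound, \eqref{eq:mean-remainder-gradient}, by telescoping the $\Psi_\infty$-minimizing slopes at scales $\tau^jd$; your Schauder/controlled-slope route to it is also fine. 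This bound needs $L^\infty$ control of $z$ on $B_d(\xi)$.

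Your interpolation at $x\in B_{1/2}$, however, needs Lipschitz control on $B_\varepsilon(x)$, which leaves $B_{1/2}$. Outside $B_{1/2}$, $E_\infty$ gives only weighted $L^m$ information through its tail. Turning that into an $L^\infty$ bound with slope-independent constants is precisely the affine $L^m\to L^\infty$ estimate you are trying to prove, so $\Lambda\le CE_\infty$ is circular. What you can actually derive is an inequality between two different radii,
\[
M(t)\le C\varepsilon^{-n/m}H+C\varepsilon d^{-1}M(T)+C\varepsilon d^{-1-n/m}H,
\qquad M(t):=\|z\|_{L^\infty(B_t)},\quad d=\tfrac{T-t}{4},
\]
where $H:=(\fint_{B_1}|z|^m)^{1/m}+\Tail_{m,sp}(z;0,1)$. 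Young's inequality at a single radius cannot absorb $M(T)$ into $M(t)$, so your closing step $E_\infty\le CE+CE_\infty^{n/(n+m)}E^{m/(n+m)}$ is not available.

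The missing idea is an iteration over radii, which is what the paper does:
\begin{enumerate}
\item Take $\varepsilon=c_\nu d$ to obtain $M(t)\le\nu M(T)+C_\nu(T-t)^{-n/m}H$ for $1/2\le t<T\le3/4$.
\item Iterate along $t_j=\frac12+\frac14(1-2^{-j})$ with $\nu2^{n/m}<1$.
\item Use $M(3/4)<\infty$, which holds since $v\in L^\infty(B_2)$ and $\ell$ is fixed, to discard the term $\nu^NM(t_N)$.
\end{enumerate}
This gives $\|z\|_{L^\infty(B_{1/2})}\le CH$ with a constant independent of the slope of $\ell$. Taking $\ell$ to be the $\Psi_m$-minimizer, so that $H=E$, and bounding $\Tail_{m,sp}(z;0,1/2)\le CH$ directly yields $\Psi_\infty(v;0,1/2)\le CE$. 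Your Step~2 then finishes the proof.
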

	\begin{proof}
Take
	\[
	V(y):=\frac{v(x_0+ry)}{r}.
	\]
	Then $V$ is fractional $p$-harmonic in $B_2$ and satisfies the same
	regularity assumptions as $v$. For every $t\in(0,1]$, a change of
	variables in the definition of $\Psi_m$ gives
	\[
	\Psi_m(V;0,t)=\Psi_m(v;x_0,rt).
	\]
	Thus it is enough to prove
	\begin{equation}\label{eq:mean-homogeneous-decay-normalized}
	\Psi_m(V;0,t)\le Ct^\sigma\Psi_m(V;0,1),
	\qquad 0<t\le\frac1{64},
	\end{equation}
	since \eqref{eq:mean-homogeneous-decay} follows by taking
	$t=\rho/r$. Relabeling $V$ as $v$, we may therefore assume
		\[
		v\in W_{\mathrm{loc}}^{s,p}(B_2)
		\cap L_{sp}^m(\R^n)
		\cap L^\infty(B_2),
		\qquad
		(-\Delta_p)^sv=0
		\quad\text{in }B_2.
		\]
		
		We first estimate the gradient of the remainder after subtracting an arbitrary affine function. Fix $\xi\in B_2$ and $d>0$ such that	$B_{2d}(\xi)\Subset B_2$.
		Lemma~\ref{lem:homogeneous-C1a} applies in $B_{2d}(\xi)$ and gives
        a representative of $v$ in $C^{1,\alpha}(B_d(\xi))$.
        We use this representative below.
		
		Let $\ell$ be an arbitrary affine function and  $
		z:=v-\ell$.
		Choose $\tau\in(0,1/32)$ so small that the constants in
		Lemma~\ref{lem:full-affine-decay-high} satisfy  $
		C\tau^\sigma\le\frac12$.

		We take
		\[
		d_j:=\tau^jd,
		\qquad
		E_j:=\Psi_\infty(v;\xi,d_j),
		\qquad j=0,1,2,\ldots,
		\]
		and let $L_j$ be a minimizing affine function for
		$\Psi_\infty(v;\xi,d_j)$.	Since $
		d_{j+1}=\tau d_j\le\frac{d_j}{32}$, Lemma~\ref{lem:full-affine-decay-high} gives
		\[
		E_{j+1}
		\le
		C\tau^\sigma E_j
		\le
		\frac12E_j.
		\]
		Consequently, we obtain
		\begin{equation}\label{eq:mean-uniform-geometric-decay}
			E_j\le2^{-j}E_0,
			\qquad
			\sum_{j=0}^{\infty}E_j\le2E_0.
		\end{equation}
		
		By the definition of $E_j$, we have
		\begin{equation}\label{eq:mean-local-Lj-approximation}
			\|v-L_j\|_{L^\infty(B_{d_j}(\xi))}
			\le d_jE_j.
		\end{equation}
		Since
		$B_{d_{j+1}}(\xi)\subset B_{d_j}(\xi)$,  we deduce that 
		\[
		\begin{aligned}
			\|L_{j+1}-L_j\|_{L^\infty(B_{d_{j+1}}(\xi))}
			&\le
			\|L_{j+1}-v\|_{L^\infty(B_{d_{j+1}}(\xi))}
			+
			\|v-L_j\|_{L^\infty(B_{d_{j+1}}(\xi))}\\
			&\le
			d_{j+1}E_{j+1}+d_jE_j.
		\end{aligned}
		\]
		
		We next estimate the difference of the slopes.
		Fix $\nu\in\mathbb S^{n-1}$ and $0<h<d_{j+1}$.
		Since $L_{j+1}-L_j$ is affine,
		\[
		2h(\nabla L_{j+1}-\nabla L_j)\cdot\nu
		=
		(L_{j+1}-L_j)(\xi+h\nu)
		-
		(L_{j+1}-L_j)(\xi-h\nu).
		\]
		Both points belong to $B_{d_{j+1}}(\xi)$, and therefore
		\[
		\begin{aligned}
			|(\nabla L_{j+1}-\nabla L_j)\cdot\nu|
			&\le
			\frac{
				|(L_{j+1}-L_j)(\xi+h\nu)|
				+
				|(L_{j+1}-L_j)(\xi-h\nu)|
			}{2h}\\
			&\le
			\frac1h
			\|L_{j+1}-L_j\|_{L^\infty(B_{d_{j+1}}(\xi))}\le
			\frac{d_jE_j+d_{j+1}E_{j+1}}{h}.
		\end{aligned}
		\]
		Letting $h\to d_{j+1}$ and then taking the supremum over
		$\nu\in\mathbb S^{n-1}$, we obtain
		\begin{equation}\label{eq:mean-successive-slopes}
			\begin{aligned}
				|\nabla L_{j+1}-\nabla L_j|
				\le
				\frac{d_jE_j+d_{j+1}E_{j+1}}{d_{j+1}}=
				\tau^{-1}E_j+E_{j+1}.
			\end{aligned}
		\end{equation}
		Using \eqref{eq:mean-uniform-geometric-decay}, we have
		\[
		\sum_{j=0}^{\infty}|\nabla L_{j+1}-\nabla L_j|
		\le
		(\tau^{-1}+1)\sum_{j=0}^{\infty}E_j
		\le
		2(\tau^{-1}+1)E_0.
		\]
		Hence the sequence $\{\nabla L_j\}$ converges.
		
		We now identify its limit with $\nabla v(\xi)$.
		Fix $\nu\in\mathbb S^{n-1}$ and set $h_j:=\frac{d_j}{2}$.  Since
		$v\in C^1(B_d(\xi))$, the fundamental theorem of calculus gives
		\[
		v(\xi+h_j\nu)-v(\xi-h_j\nu)
		=
		\int_{-h_j}^{h_j}
		\nabla v(\xi+t\nu)\cdot\nu\,\diff t.
		\]
	Therefore, 	using \eqref{eq:mean-local-Lj-approximation} and
		$2h_j=d_j$, we obtain
		\[
		\begin{aligned}
			&|(\nabla L_j-\nabla v(\xi))\cdot\nu|\\
				&\quad \le
			\frac{
				|(L_j-v)(\xi+h_j\nu)|
				+
				|(L_j-v)(\xi-h_j\nu)|
			}{2h_j}+
			\frac1{2h_j}
			\int_{-h_j}^{h_j}
			|\nabla v(\xi+t\nu)-\nabla v(\xi)|\,\diff t\\
			&\quad \le
			\frac{2d_jE_j}{2h_j}
			+
			\sup_{y\in B_{h_j}(\xi)}
			|\nabla v(y)-\nabla v(\xi)|=
			2E_j
			+
			\sup_{y\in B_{h_j}(\xi)}
			|\nabla v(y)-\nabla v(\xi)|.
		\end{aligned}
		\]
		The right hand side is independent of $\nu$. Hence
		\[
		|\nabla L_j-\nabla v(\xi)|
		\le
		2E_j
		+
		\sup_{y\in B_{h_j}(\xi)}
		|\nabla v(y)-\nabla v(\xi)|.
		\]
		Since $E_j\to0$, $h_j\to0$, and $\nabla v$ is continuous at $\xi$,
		\begin{equation}\label{eq:mean-slope-gradient-limit}
			\nabla L_j\longrightarrow \nabla v(\xi).
		\end{equation}
		
	We next compare \(\nabla L_0\) with the slope of \(\ell\). Since
	\[
	L_0-\ell=(L_0-v)+(v-\ell),
	\]
	we have
	\[
	\|L_0-\ell\|_{L^\infty(B_d(\xi))}
	\le
	dE_0+\|z\|_{L^\infty(B_d(\xi))}.
	\]
	Since \(L_0-\ell\) is affine, we deduce that 
	\[
	|\nabla L_0-\nabla\ell|
	\le
	\frac1d\|L_0-\ell\|_{L^\infty(B_d(\xi))}
	\le
	E_0+\frac1d\|z\|_{L^\infty(B_d(\xi))}.
	\]
	Thus, by \eqref{eq:mean-successive-slopes},
		\eqref{eq:mean-uniform-geometric-decay}, and
		\eqref{eq:mean-slope-gradient-limit}, we obtain
		\[
		\begin{aligned}
				|\nabla z(\xi)|
			=
			|\nabla v(\xi)-\nabla \ell| &\le
			|\nabla v(\xi)-\nabla L_0| +|\nabla L_0-\nabla \ell|\\
			&\le
			\sum_{j=0}^{\infty}|\nabla L_{j+1}-\nabla L_j|+|\nabla L_0-\nabla \ell| \le CE_0+\frac{C}{d}
			\|z\|_{L^\infty(B_d(\xi))}.
		\end{aligned}
		\]
	Since $\ell$ is an admissible affine function in the definition of
		$E_0=\Psi_\infty(v;\xi,d)$,
		\[
		E_0
		\le
		\frac1d
		\left[
		\|z\|_{L^\infty(B_d(\xi))}
		+
		\Tail_{m,sp}(z;\xi,d)
		\right].
		\]
	Therefore,	we conclude that
		\begin{equation}\label{eq:mean-remainder-gradient}
			|\nabla z(\xi)|
			\le
			\frac Cd
			\left[
			\|z\|_{L^\infty(B_d(\xi))}
			+
			\Tail_{m,sp}(z;\xi,d)
			\right].
		\end{equation}
		
		We now use \eqref{eq:mean-remainder-gradient} to control the local
		supremum of $z$ by its $L^m$ norm and its tail. Take
		\[
		H
		:=
		\left(
		\fint_{B_1}|z|^m\,\diff x
		\right)^{1/m}
		+
		\Tail_{m,sp}(z;0,1),
		\qquad
		M(t):=\|z\|_{L^\infty(B_t)}.
		\]
		Fix
		\[
		\frac12\le t<T\le\frac34,
		\qquad
		d:=\frac{T-t}{4}.
		\]
		If $\xi\in B_{t+2d}$, then
		\[
		B_d(\xi)\subset B_T,
		\qquad
		B_{2d}(\xi)\subset B_T\subset B_2.
		\]
		Splitting the tail at radius $1$ gives 
		\[
		\begin{aligned}
			\Tail_{m,sp}(z;\xi,d)^m
			={}&
			d^{sp}
			\int_{B_1\setminus B_d(\xi)}
			\frac{|z(y)|^m}{|y-\xi|^{n+sp}}
			\,\diff y+
			d^{sp}
			\int_{\R^n\setminus B_1}
			\frac{|z(y)|^m}{|y-\xi|^{n+sp}}
			\,\diff y.
		\end{aligned}
		\]
		On $B_1\setminus B_d(\xi)$,
		$|y-\xi|\ge d$, and hence
		\[
		d^{sp}
		\int_{B_1\setminus B_d(\xi)}
		\frac{|z(y)|^m}{|y-\xi|^{n+sp}}
		\,\diff y
		\le
		d^{-n}\int_{B_1}|z(y)|^m\,\diff y.
		\]
		Moreover, since $\xi\in B_{t+2d}\subset B_{3/4}$,
		\[
		|y-\xi|\ge\frac{|y|}{4}
		\qquad\text{for }y\in\R^n\setminus B_1.
		\]
		Thus
		\[
		d^{sp}
		\int_{\R^n\setminus B_1}
		\frac{|z(y)|^m}{|y-\xi|^{n+sp}}
		\,\diff y
		\le
		Cd^{sp}
		\int_{\R^n\setminus B_1}
		\frac{|z(y)|^m}{|y|^{n+sp}}
		\,\diff y.
		\]
		Since $d<1$, we obtain
		\begin{equation}\label{eq:mean-tail-change-centre-proof}
			\Tail_{m,sp}(z;\xi,d)^m
			\le
			Cd^{-n}H^m.
		\end{equation}
		
		Applying \eqref{eq:mean-remainder-gradient} at every
		$\xi\in B_{t+2d}$ and using
		\eqref{eq:mean-tail-change-centre-proof}, we get
		\begin{equation}\label{eq:mean-local-gradient-interpolation}
			\|\nabla z\|_{L^\infty(B_{t+2d})}
			\le
			Cd^{-1}M(T)
			+
			Cd^{-1-n/m}H.
		\end{equation}
		
		Fix $x\in B_t$ and $0<\varepsilon<d$.
		For every $y\in B_\varepsilon(x)$, the line segment joining $x$ and
		$y$ is contained in $B_{t+2d}$. Hence
		\[
		|z(x)|
		\le
		|z(y)|
		+
		\varepsilon
		\|\nabla z\|_{L^\infty(B_{t+2d})}.
		\]
		Averaging over $B_\varepsilon(x)$ and using H\"older's inequality,
		\[
		\begin{aligned}
			|z(x)|
			\le
			\fint_{B_\varepsilon(x)}|z(y)|\,\diff y
			+
			\varepsilon
			\|\nabla z\|_{L^\infty(B_{t+2d})}\le
			C\varepsilon^{-n/m}H
			+
			\varepsilon
			\|\nabla z\|_{L^\infty(B_{t+2d})}.
		\end{aligned}
		\]
		Using \eqref{eq:mean-local-gradient-interpolation} and taking the
		supremum over $x\in B_t$, we obtain
		\[
		M(t)
		\le
		C\varepsilon^{-n/m}H
		+
		C\varepsilon d^{-1}M(T)
		+
		C\varepsilon d^{-1-n/m}H.
		\]
		
		Let $\nu\in(0,1)$. Choose
		\[
		\varepsilon=c_\nu d,
		\]
		where $c_\nu\in(0,1)$ is sufficiently small that
		$Cc_\nu\le\nu$. Since $d=(T-t)/4$, it follows that
		\begin{equation}\label{eq:mean-radius-iteration}
			M(t)
			\le
			\nu M(T)
			+
			C_\nu(T-t)^{-n/m}H.
		\end{equation}
		
		Choose $\nu<2^{-n/m}$ and set
		\[
		t_j
		:=
		\frac12+\frac14(1-2^{-j}),
		\qquad j=0,1,2,\ldots.
		\]
		Then
		\[
		t_0=\frac12,
		\qquad
		t_j\to\frac34,
		\qquad
		t_{j+1}-t_j=2^{-j-3}.
		\]
		Applying \eqref{eq:mean-radius-iteration} successively with
		$t=t_j$ and $T=t_{j+1}$ gives
		\[
		M(1/2)
		\le
		\nu^NM(t_N)
		+
		CH
		\sum_{j=0}^{N-1}
		\bigl(\nu2^{n/m}\bigr)^j.
		\]
		Since $M(t_N)\le M(3/4)<\infty$ for the fixed affine function
		$\ell$, letting $N\to\infty$ yields
		\begin{equation}\label{eq:mean-local-supremum}
			M(1/2)\le CH.
		\end{equation}
		The constant in \eqref{eq:mean-local-supremum} is independent of
		the slope of $\ell$.
		
		We also have
		\[
		\begin{aligned}
			\Tail_{m,sp}(z;0,1/2)^m
			={}&
			2^{-sp}
			\int_{B_1\setminus B_{1/2}}
			\frac{|z(y)|^m}{|y|^{n+sp}}
			\,\diff y+
			2^{-sp}
			\int_{\R^n\setminus B_1}
			\frac{|z(y)|^m}{|y|^{n+sp}}
			\,\diff y\le CH^m.
		\end{aligned}
		\]
		Consequently,
		\[
		\|z\|_{L^\infty(B_{1/2})}
		+
		\Tail_{m,sp}(z;0,1/2)
		\le CH.
		\]
		
		Now choose $\ell$ to be a minimizing affine function for
		$\Psi_m(v;0,1)$. Then
		\[
		H=\Psi_m(v;0,1),
		\]
		and therefore
		\begin{equation}\label{eq:mean-to-uniform-affine}
			\Psi_\infty(v;0,1/2)
			\le
			C\Psi_m(v;0,1).
		\end{equation}
		
		Finally, since
		\[
		\Psi_m(v;0,\rho)\le\Psi_\infty(v;0,\rho),
		\]
		Lemma~\ref{lem:full-affine-decay-high}, applied with initial radius
		$1/2$, gives for $0<\rho\le1/64$,
		\[
		\begin{aligned}
			\Psi_m(v;0,\rho)
			\le
			\Psi_\infty(v;0,\rho)\le
			C(2\rho)^\sigma
			\Psi_\infty(v;0,1/2)\le
			C\rho^\sigma
			\Psi_m(v;0,1).
		\end{aligned}
		\]
		This proves \eqref{eq:mean-homogeneous-decay-normalized} and completes the proof.
	\end{proof}

\section{Weak gradients and pointwise potential estimates}
\label{sec:mean-gradient}
In this section, we prove Theorem~\ref{thm:mean-gradient}.
We first combine the homogeneous affine decay from
Section~\ref{sec:affine-excess} with the comparison estimate in
Lemma~\ref{lem:mean-measure-comparison} to derive an affine
recurrence for SOLA. We then use this recurrence to control
the gradients of mollifications and establish
$u\in W^{1,m}_{\mathrm{loc}}(\Omega)$.
Finally, we identify the limiting affine slope at Lebesgue
points of the weak gradient and obtain the pointwise estimate.

\subsection{The affine recurrence for SOLA}
For $g\in L_{sp}^m(\R^n)$ and $0<\rho\le r$, restriction of the local
integral and splitting the tail at $r$ give
\begin{align}
&\frac1\rho\left(\fint_{B_\rho(x_0)}|g|^m\,\diff x\right)^{1/m}
 +\frac1\rho\Tail_{m,sp}(g;x_0,\rho)\notag\\
&\qquad\le C\left(\frac r\rho\right)^{1+n/m}
 \left[\frac1r\left(\fint_{B_r(x_0)}|g|^m\,\diff x\right)^{1/m}
 +\frac1r\Tail_{m,sp}(g;x_0,r)\right].
\label{eq:mean-radius-restriction}
\end{align}
Indeed, on $B_r(x_0)\setminus B_\rho(x_0)$ the kernel is bounded by
$\rho^{-n-sp}$. The outer part of the normalized tail has the factor
$(\rho/r)^{sp/m-1}\le1$.

\begin{lemma}[SOLA convergence at a fixed scale]
\label{lem:sola-fixed-scale}
Let $u_j\to u$ be as in Definition~\ref{def:sola}. For every fixed
$B_r(x_0)\Subset\Omega$,
\begin{equation}\label{eq:sola-Lm-tail-convergence}
\|u_j-u\|_{L^m(B_r(x_0))}
+\Tail_{m,sp}(u_j-u;x_0,r)\longrightarrow0.
\end{equation}
Consequently,
\begin{equation}\label{eq:sola-psi-convergence}
\Psi_m(u_j;x_0,r)\longrightarrow\Psi_m(u;x_0,r).
\end{equation}
\end{lemma}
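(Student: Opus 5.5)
The plan is to deduce \eqref{eq:sola-Lm-tail-convergence} directly from the global convergence \eqref{eq:sola-tail-class}, and then to get \eqref{eq:sola-psi-convergence} from the fact that $\Psi_m(\cdot;x_0,r)$ is Lipschitz with respect to the quantity in \eqref{eq:sola-Lm-tail-convergence}.

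\textbf{Step 1: the local term and the tail.} The local term is immediate: $\|u_j-u\|_{L^m(B_r(x_0))}\le\|u_j-u\|_{L^m(\R^n)}\to0$ by \eqref{eq:sola-tail-class}. For the tail, note that $|y-x_0|\ge r$ on $\R^n\setminus B_r(x_0)$, so
\[
\Tail_{m,sp}(u_j-u;x_0,r)^m
= r^{sp}\int_{\R^n\setminus B_r(x_0)}\frac{|u_j-u|^m}{|y-x_0|^{n+sp}}\,\diff y
\le r^{-n}\|u_j-u\|_{L^m(\R^n)}^m\to0 .
\]
Here it matters that $u_j-u=0$ outside $\Omega$ and that the convergence in \eqref{eq:sola-tail-class} is global in $L^m(\R^n)$. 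No dominated convergence argument is needed.

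\textbf{Step 2: a Lipschitz bound for $\Psi_m$.} For any functions $f,g\in L^m_{sp}(\R^n)$ and any affine $\ell$, Minkowski's inequality in $L^m(B_r)$, together with Minkowski's inequality for the weighted $L^m$ norm defining the tail, gives
\[
\mathcal J_m^{f}(\ell)\le \mathcal J_m^{g}(\ell)
+\frac{C}{r}\Bigl[\|f-g\|_{L^m(B_r(x_0))}+\Tail_{m,sp}(f-g;x_0,r)\Bigr],
\]
with $C=C(n,m)$. Here $\mathcal J_m^f$ denotes the functional from the proof of Lemma~\ref{lem:affine-minimizer}, and the averaged norm contributes a factor $|B_r|^{-1/m}$. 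Since $m=p-1\ge1$, these are genuine norms. Taking the infimum over $\ell$ on both sides and then exchanging the roles of $f$ and $g$ yields
\[
|\Psi_m(f;x_0,r)-\Psi_m(g;x_0,r)|\le \frac{C(n,m,r)}{r}\Bigl[\|f-g\|_{L^m(B_r(x_0))}+\Tail_{m,sp}(f-g;x_0,r)\Bigr].
\]
Applying this with $f=u_j$, $g=u$ and invoking Step 1 gives \eqref{eq:sola-psi-convergence}. Both excesses are finite because $u_j,u\in L^m_{sp}(\R^n)$.

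\textbf{Main obstacle.} The only potential difficulty is that the tail integral runs over all of $\R^n$, while convergence in Definition~\ref{def:sola} is stated only locally. This is resolved by the zero exterior data and the boundedness of $\Omega$, which upgrade the convergence to global $L^m$ convergence as recorded in \eqref{eq:sola-tail-class}. One should make sure this upgrade is applied with the exponent $m\le q$.
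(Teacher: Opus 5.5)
Your proposal is correct and matches the paper's proof: the tail is bounded by $r^{-n}\|u_j-u\|_{L^m(\R^n)}^m$ using the global $L^m$ convergence from \eqref{eq:sola-tail-class}. The convergence of $\Psi_m$ then comes from the Minkowski estimate for a fixed affine $\ell$, followed by the infimum over $\ell$ and the interchange of $u_j$ and $u$. (With the averaged local norm used in the paper, the Lipschitz constant is simply $1$, so the $r$-dependence in your constant is only a bookkeeping artifact.)
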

\begin{proof}
By \eqref{eq:sola-tail-class}, $u_j\to u$ in $L^m(\R^n)$, and
\[
\Tail_{m,sp}(u_j-u;x_0,r)^m
\le r^{-n}\|u_j-u\|_{L^m(\R^n)}^m\longrightarrow0.
\]
For an arbitrary affine function $\ell$, Minkowski's inequality gives
\[
\begin{aligned}
	&\frac1r\left(\fint_{B_r(x_0)}|u_j-\ell|^m\,\diff x\right)^{1/m}
	+\frac1r\Tail_{m,sp}(u_j-\ell;x_0,r)\\
	&\qquad\le
	\frac1r\left(\fint_{B_r(x_0)}|u-\ell|^m\,\diff x\right)^{1/m}
	+\frac1r\Tail_{m,sp}(u-\ell;x_0,r)\\
	&\qquad\quad+
	\frac1r\left(\fint_{B_r(x_0)}|u_j-u|^m\,\diff x\right)^{1/m}
	+\frac1r\Tail_{m,sp}(u_j-u;x_0,r).
\end{aligned}
\]
Taking the infimum over all affine functions $\ell$ yields
\[
\Psi_m(u_j;x_0,r)-\Psi_m(u;x_0,r)
\le A_j,
\]
where
\[
A_j:=
\frac1r\left(\fint_{B_r(x_0)}|u_j-u|^m\,\diff x\right)^{1/m}
+\frac1r\Tail_{m,sp}(u_j-u;x_0,r).
\]
Interchanging $u_j$ and $u$ gives the reverse inequality. Hence
\[
|\Psi_m(u_j;x_0,r)-\Psi_m(u;x_0,r)|
\le A_j.
\]
Since $A_j\to0$, we obtain
\[
\Psi_m(u_j;x_0,r)\to\Psi_m(u;x_0,r).
\]
\end{proof}

\begin{lemma}\label{lem:mean-one-step}
There exist $\theta\in(0,1/512)$ and $C\ge1$, depending only on $n,p,s$,
such that every SOLA $u$ satisfies
\begin{equation}\label{eq:mean-one-step}
\Psi_m(u;x_0,\theta r)
\le\frac12\Psi_m(u;x_0,r)+CD_\mu(x_0,2r)
\end{equation}
whenever $B_{2r}(x_0)\Subset\Omega$.
\end{lemma}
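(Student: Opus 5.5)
We first prove the estimate for the energy approximations $u_j$ and then pass to the limit by Lemma~\ref{lem:sola-fixed-scale}. Fix $j$ and let $v_j$ be the fractional $p$-harmonic replacement of $u_j$ in $B_{2r}(x_0)$, or in $B_r(x_0)$ if that scale is more convenient for the decay lemma. Set $w_j:=u_j-v_j$. Since $u_j\in W^{s,p}(\R^n)$ has zero exterior values, $v_j\in W^{s,p}(\R^n)\subset L^m_{sp}(\R^n)$. By Lemma~\ref{lem:local-boundedness}, $v_j$ is bounded on $B_{r}(x_0)$. Hence Lemma~\ref{lem:mean-homogeneous-decay} applies to $v_j$ with radius $r/2$, so that $v_j$ is harmonic in $B_{2\cdot(r/2)}$. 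It gives
\[
\Psi_m(v_j;x_0,\theta r)\le C_1(2\theta)^\sigma\Psi_m(v_j;x_0,r/2)
\]
for $\theta\le1/128$.

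Next we transfer the excesses between $u_j$ and $v_j$. Since $\Psi_m$ is an infimum of seminorm-type quantities, the triangle inequality gives
\[
\Psi_m(u_j;x_0,\rho)\le\Psi_m(v_j;x_0,\rho)+\frac1\rho\Bigl(\fint_{B_\rho}|w_j|^m\Bigr)^{1/m}+\frac1\rho\Tail_{m,sp}(w_j;x_0,\rho),
\]
and the same holds with $u_j,v_j$ interchanged. We apply this at $\rho=\theta r$ and at $\rho=r/2$. For $v_j$ at $r/2$, \eqref{eq:mean-radius-restriction} together with the minimizing affine function of $u_j$ at scale $r$ bounds $\Psi_m(u_j;x_0,r/2)\le C\Psi_m(u_j;x_0,r)$. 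We handle the $w_j$ terms with \eqref{eq:mean-measure-comparison-small}, taking the comparison ball to be $B_{2r}(x_0)$. This gives
\[
C\theta^{-1-n/m}D_{\mu_j}(x_0,2r)
\]
at scale $\theta r$ and $C\,D_{\mu_j}(x_0,2r)$ at scale $r/2$. Combining, we obtain
\[
\Psi_m(u_j;x_0,\theta r)\le C_2\theta^\sigma\Psi_m(u_j;x_0,r)+C_\theta D_{\mu_j}(x_0,2r).
\]
We now fix $\theta\in(0,1/512)$ so that $C_2\theta^\sigma\le1/2$. All constants depend only on $n,p,s$, which is crucial. This uniformity is guaranteed because Lemma~\ref{lem:mean-homogeneous-decay} is slope-independent.

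Finally we let $j\to\infty$. By \eqref{eq:sola-psi-convergence}, $\Psi_m(u_j;x_0,\theta r)\to\Psi_m(u;x_0,\theta r)$, and the same holds at radius $r$. For the measure term, \eqref{eq:sola-measure-control} gives
\[
\limsup_j|\mu_j|(B_{2r})\le|\mu|(\overline B_{2r}).
\]
To replace $\overline B_{2r}$ by the open ball, we apply the argument with $2r$ replaced by $2r'$ for $r'<r$ slightly smaller, use comparison balls $B_{2r'}$, and let $r'\uparrow r$. The left side is unaffected, and $\Psi_m(u;x_0,r')\to\Psi_m(u;x_0,r)$ by continuity of the defining quantities in the radius. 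Alternatively, we absorb the change of radius via \eqref{eq:mean-radius-restriction} while keeping $\theta$ fixed. In either case $|\mu|(\overline B_{2r'})\le|\mu|(B_{2r})$.

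The main obstacle is bookkeeping the radii. The comparison ball must lie in $\Omega$ and have exterior data $u_j$. The harmonic replacement must be harmonic in twice the ball on which the decay lemma is applied. The measure must be evaluated on an open ball of radius at most $2r$. We also have to check that $v_j$ satisfies the hypotheses $W^{s,p}_{\mathrm{loc}}\cap L^m_{sp}\cap L^\infty$ of Lemma~\ref{lem:mean-homogeneous-decay}, and we use the local boundedness lemma for this.
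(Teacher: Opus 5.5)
Your proposal follows the paper's route: harmonic replacement of the energy approximation, the slope-independent decay of Lemma~\ref{lem:mean-homogeneous-decay}, the $L^{p-1}$ comparison \eqref{eq:mean-measure-comparison-small} at two radii, and Lemma~\ref{lem:sola-fixed-scale} to pass to the SOLA. Your energy-level inequality is correct.

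The one real difference is your choice of comparison ball $B_{2r}(x_0)$ instead of $B_r(x_0)$, and it is what causes trouble in the limit. Condition \eqref{eq:sola-measure-control} only gives $\limsup_j|\mu_j|(B_{2r})\le|\mu|(\overline{B_{2r}})$. This can be strictly larger than $|\mu|(B_{2r})$ when $|\mu|$ charges $\partial B_{2r}(x_0)$.

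Your repair does not work as written. After replacing $r$ by $r'<r$, the left side becomes $\Psi_m(u;x_0,\theta r')$, which does depend on $r'$. So you need continuity of $\rho\mapsto\Psi_m(u;x_0,\rho)$ at $\theta r$ as well as at $r$. That continuity is true, but you have not proved it. Upper semicontinuity holds because $\Psi_m$ is an infimum of functions that are continuous in $\rho$. Lower semicontinuity follows from the uniform coefficient bound \eqref{eq:mean-affine-norm} for the minimizers together with Fatou's lemma on the tail.

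The paper avoids all of this. It takes the replacement in $B_r(x_0)$, which is harmonic there and, by Lemma~\ref{lem:local-boundedness}, bounded in $B_{r/2}$. The decay lemma then runs from radius $r/4$ down to $\theta r$ with $\theta<1/512$, and the comparison terms involve $D_{\mu_j}(x_0,r)$. In the limit, $\limsup_j|\mu_j|(B_r)\le|\mu|(\overline{B_r})\le|\mu|(B_{2r})$, which gives $D_\mu(x_0,2r)$ up to the harmless factor $2^{(n-\beta)/m}$.

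If you take $B_r(x_0)$ as the comparison ball, the option you mention in passing, your argument is complete with no limit in the radius needed.
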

\begin{proof}
	We first prove the estimate for an energy solution. 	Let
	$U\in W^{s,p}(\R^n)$
	solve
	\[
	(-\Delta_p)^sU=\nu
	\qquad\text{weakly in }B_{2r}(x_0),
	\]
	where $\nu$ is a finite signed Radon measure. Let $v$ be the
	fractional $p$-harmonic replacement of $U$ in $B_r(x_0)$, and set
	\[
	E:=\Psi_m(U;x_0,r),
	\qquad
	D:=D_\nu(x_0,r).
	\]
	Let
	\[
	\ell(x)=b+a\cdot(x-x_0)
	\]
	be a minimizing affine function for $\Psi_m(U;x_0,r)$. Thus
	\begin{equation}\label{eq:one-step-energy-minimizer}
		\frac1r
		\left[
		\left(\fint_{B_r(x_0)}|U-\ell|^m\,\diff x\right)^{1/m}
		+
		\Tail_{m,sp}(U-\ell;x_0,r)
		\right]
		=E.
	\end{equation}
	
	We first estimate the affine excess of $v$ at radius $r/4$.
	Since
	\[
	v-\ell=(U-\ell)+(v-U),
	\]
	Minkowski's inequality gives
	\begin{align*}
		\Psi_m(v;x_0,r/4)
		&\le
		\frac4r
		\left[
		\left(
		\fint_{B_{r/4}(x_0)}
		|U-\ell|^m\,\diff x
		\right)^{1/m}
		+
		\Tail_{m,sp}(U-\ell;x_0,r/4)
		\right]\\
		&\quad+
		\frac4r
		\left[
		\left(
		\fint_{B_{r/4}(x_0)}
		|U-v|^m\,\diff x
		\right)^{1/m}
		+
		\Tail_{m,sp}(U-v;x_0,r/4)
		\right].
	\end{align*}
	By \eqref{eq:mean-radius-restriction} and
	\eqref{eq:one-step-energy-minimizer}, the first bracket, multiplied by $4/r$, is bounded by $CE$. By
	Lemma~\ref{lem:mean-measure-comparison}, applied with
	$\rho=r/4$, the second one is bounded by $CD$. Therefore
	\begin{equation}\label{eq:replacement-mean-excess-r4}
		\Psi_m(v;x_0,r/4)
		\le C(E+D).
	\end{equation}
	
	The function $v$ belongs to $W^{s,p}(\R^n)$, is fractional
	$p$-harmonic in $B_r(x_0)$, and is bounded in $B_{r/2}(x_0)$ by
	Lemma~\ref{lem:local-boundedness}. Hence
	Lemma~\ref{lem:mean-homogeneous-decay} can be applied with initial
	radius $r/4$. If $ 0<\theta<\frac1{512}$,
	then $\theta r<(r/4)/64$, and consequently
	\begin{align}	\label{eq:replacement-decay-small-scale}
		\begin{split}
		\Psi_m(v;x_0,\theta r)
		&\le
		C
		\left(
		\frac{\theta r}{r/4}
		\right)^\sigma
		\Psi_m(v;x_0,r/4)=
		C(4\theta)^\sigma
		\Psi_m(v;x_0,r/4)\\
		&\le
		C(4\theta)^\sigma(E+D).
	\end{split}\end{align}
	
	We now return from $v$ to $U$. By Minkowski's inequality,
	\[
	\Psi_m(U;x_0,\theta r)
	\le
	\Psi_m(v;x_0,\theta r)
	+
	\frac1{\theta r}
	\left[
	\left(
	\fint_{B_{\theta r}(x_0)}
	|U-v|^m\,\diff x
	\right)^{1/m}
	+
	\Tail_{m,sp}(U-v;x_0,\theta r)
	\right].
	\]
	Using \eqref{eq:mean-measure-comparison-small} with
	$\rho=\theta r$, we have
	\[
	\frac1{\theta r}
	\left[
	\left(
	\fint_{B_{\theta r}(x_0)}
	|U-v|^m\,\diff x
	\right)^{1/m}
	+
	\Tail_{m,sp}(U-v;x_0,\theta r)
	\right]
	\le
	C\theta^{-1-n/m}D.
	\]
	Together with \eqref{eq:replacement-decay-small-scale}, this yields
	\[
	\Psi_m(U;x_0,\theta r)
	\le
	C(4\theta)^\sigma E
	+
	C\left(
	(4\theta)^\sigma+\theta^{-1-n/m}
	\right)D.
	\]
	Choose $\theta=\theta(n,p,s)\in(0,1/512)$ small enough that
	\[
	C(4\theta)^\sigma\le\frac12.
	\]
	Since $\theta$ is now fixed, the remaining coefficient of $D$ is
	absorbed into a constant depending only on $n,p,s$. We obtain
	\begin{equation}\label{eq:mean-one-step-energy}
		\Psi_m(U;x_0,\theta r)
		\le
		\frac12\Psi_m(U;x_0,r)
		+
		CD_\nu(x_0,r).
	\end{equation}
	
	We now pass to the SOLA limit. Let $u_j,\mu_j$ be the approximations
    in Definition~\ref{def:sola}. We have
	\[
	u_j\in W^{s,p}(\R^n)
	\]
	by the definition of SOLA. Moreover, $u_j$ solves
	\[
	(-\Delta_p)^su_j=\mu_j
	\qquad\text{weakly in }\Omega,
	\]
	and therefore also in $B_{2r}(x_0)$. Hence
	\eqref{eq:mean-one-step-energy} applies to $U=u_j$ and
	$\nu=\mu_j$:
	\begin{equation}\label{eq:one-step-approximation}
		\Psi_m(u_j;x_0,\theta r)
		\le
		\frac12\Psi_m(u_j;x_0,r)
		+
		CD_{\mu_j}(x_0,r).
	\end{equation}
	
	By Lemma~\ref{lem:sola-fixed-scale},
	\[
	\Psi_m(u_j;x_0,r)
	\longrightarrow
	\Psi_m(u;x_0,r),
	\qquad
	\Psi_m(u_j;x_0,\theta r)
	\longrightarrow
	\Psi_m(u;x_0,\theta r).
	\]
	On the other hand, the measure approximation condition gives
	\[
	\limsup_{j\to\infty}
	|\mu_j|(B_r(x_0))
	\le
	|\mu|(\overline{B_r(x_0)}).
	\]
	Therefore, 
	\begin{align*}
		\limsup_{j\to\infty}D_{\mu_j}(x_0,r)
		\le
		\left(
		\frac{|\mu|(\overline{B_r(x_0)})}
		{r^{n-\beta}}
		\right)^{1/m}\le
		\left(
		\frac{|\mu|(B_{2r}(x_0))}
		{r^{n-\beta}}
		\right)^{1/m}=
		2^{(n-\beta)/m}
		D_\mu(x_0,2r).
	\end{align*}
	Taking the upper limit in \eqref{eq:one-step-approximation} gives
	\[
	\Psi_m(u;x_0,\theta r)
	\le
	\frac12\Psi_m(u;x_0,r)
	+
	CD_\mu(x_0,2r).
	\]
	This proves \eqref{eq:mean-one-step}.
\end{proof}

We next construct the averaged affine expansion, without assuming the existence of a weak gradient.
\begin{lemma}\label{lem:mean-affine-expansion}
	Let $u$ be a SOLA, let $B_{2R}(x_0)\Subset\Omega$, and assume that
	$\Wpot_{\gamma,p}^{|\mu|}(x_0,2R)<\infty$.
	With $\theta$ as in Lemma~\ref{lem:mean-one-step}, take
	\[
	r_k:=\frac R2\theta^k,\qquad
	E_k:=\Psi_m(u;x_0,r_k),\qquad D_k:=D_\mu(x_0,2r_k),
	\]
	and let $\ell_k(x)=b_k+a_k\cdot(x-x_0)$ be a minimizing affine
	function for $E_k$.
	Then there is a unique pair $(b_*,a_*)\in\R\times\R^n$ such that
	\begin{equation}\label{eq:mean-first-order-expansion}
		\lim_{r\to0}\frac1r
		\left(\fint_{B_r(x_0)}
		|u-b_*-a_*\cdot(x-x_0)|^m\,\diff x\right)^{1/m}=0.
	\end{equation}
	Moreover,
	\begin{equation}\label{eq:mean-affine-limits}
		E_k+D_k+|a_k-a_*|+\frac{|b_k-b_*|}{r_k}\longrightarrow0,
	\end{equation}
	and for $C=C(n,p,s)$, we have
	\begin{equation}\label{eq:mean-all-scale-bound}
		|a_*|+\sup_{k\ge0}|a_k|+\sum_{k=0}^\infty E_k
		\le C\left[\mathcal A_m(u;x_0,2R)
		+\Wpot_{\gamma,p}^{|\mu|}(x_0,2R)\right].
	\end{equation}
\end{lemma}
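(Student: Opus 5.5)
The plan is to iterate Lemma~\ref{lem:mean-one-step} along the radii $r_k$ and then read off convergence of the minimizing affine functions. Since $r_{k+1}=\theta r_k$ and $B_{2r_k}(x_0)\subset B_R(x_0)\Subset\Omega$, the lemma gives
\[
E_{k+1}\le\tfrac12E_k+CD_k .
\]
Summing over $k=0,\dots,N$ and absorbing half of $\sum E_k$ yields
\[
\sum_{k=0}^{N+1}E_k\le 2E_0+2C\sum_{k=0}^N D_k .
\]
The sequence $D_k=D_\mu(x_0,\theta^k R)$ is exactly the dyadic sum in Lemma~\ref{lem:dyadic-potential}, so by \eqref{eq:full-dyadic-wolff} we get $\sum_k D_k\le C\,\Wpot_{\gamma,p}^{|\mu|}(x_0,2R)<\infty$. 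In particular $D_k\to0$ and $E_k\to0$.

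For $E_0$, I test the infimum with the constant affine function $\ell=(u)_{B_{2R}}$. Then \eqref{eq:mean-radius-restriction}, used with $\rho=R/2$ and $r=2R$, gives $E_0\le C\mathcal A_m(u;x_0,2R)$. This proves the bound for $\sum_k E_k$ in \eqref{eq:mean-all-scale-bound}.

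Next, the slopes. On $B_{r_{k+1}}(x_0)$ I compare $\ell_{k+1}$ and $\ell_k$ through $u$. By Minkowski and \eqref{eq:mean-radius-restriction},
\[
\Bigl(\fint_{B_{r_{k+1}}}|\ell_{k+1}-\ell_k|^m\Bigr)^{1/m}\le r_{k+1}E_{k+1}+C\theta^{-n/m}r_kE_k .
\]
Lemma~\ref{lem:affine-minimizer}, applied at $x_0$ with radius $r_{k+1}$, then gives
\[
|b_{k+1}-b_k|+r_{k+1}|a_{k+1}-a_k|\le C(r_{k+1}E_{k+1}+r_kE_k),
\]
so that $|a_{k+1}-a_k|\le C_\theta(E_k+E_{k+1})$ and $|b_{k+1}-b_k|\le Cr_k(E_k+E_{k+1})$. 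Summability of $E_k$ makes $(a_k)$ Cauchy with limit $a_*$, and $|a_k-a_*|\le C\sum_{j\ge k}E_j\to0$. Since $r_j\le r_k$ for $j\ge k$, we get $\sum_{j\ge k}|b_{j+1}-b_j|\le Cr_k\sum_{j\ge k}E_j$. Hence $b_k\to b_*$ with $|b_k-b_*|/r_k\to0$, which is \eqref{eq:mean-affine-limits}.

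For $a_0$ itself, Lemma~\ref{lem:affine-minimizer} gives
\[
r_0|a_0|\le C\Bigl(\fint_{B_{r_0}}|\ell_0-(u)_{B_{2R}}|^m\Bigr)^{1/m}\le C r_0\bigl(E_0+\mathcal A_m(u;x_0,2R)\bigr),
\]
which together with the telescoping bounds $\sup_k|a_k|$ and $|a_*|$. This completes \eqref{eq:mean-all-scale-bound}.

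For \eqref{eq:mean-first-order-expansion}, given $r$ small, I choose $k$ with $r_{k+1}<r\le r_k$ and write $\ell_*=b_*+a_*\cdot(x-x_0)$. Then
\[
\frac1r\Bigl(\fint_{B_r}|u-\ell_*|^m\Bigr)^{1/m}\le C\theta^{-1-n/m}\Bigl[E_k+|a_k-a_*|+\frac{|b_k-b_*|}{r_k}\Bigr]\to0 .
\]
Uniqueness follows because two admissible pairs $(b,a)$ and $(b',a')$ differ by an affine function $L$ with $r^{-1}(\fint_{B_r}|L|^m)^{1/m}\to0$. By \eqref{eq:mean-affine-norm} this forces $|b-b'|/r+|a-a'|\to0$, hence $L=0$.

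The main obstacle is bookkeeping rather than any new estimate: the one-step lemma involves $D_\mu(x_0,2r_k)$, so the radii must be arranged so that $B_{2r_0}\Subset\Omega$ and the resulting sum is dominated by the potential at radius $2R$. Choosing $r_0=R/2$ handles this via \eqref{eq:full-dyadic-wolff}.
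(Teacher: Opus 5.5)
Your proposal is correct and follows the paper's proof essentially step for step: the recurrence from Lemma~\ref{lem:mean-one-step} with absorption, the dyadic comparison with the Wolff potential via Lemma~\ref{lem:dyadic-potential}, the constant competitor $(u)_{B_{2R}(x_0)}$ for $E_0$ and $a_0$, comparison of successive minimizers through \eqref{eq:mean-affine-norm} with telescoping, interpolation between radii, and uniqueness. The only cosmetic difference is that you bound $|b_k-b_*|/r_k$ by the tail $\sum_{j\ge k}E_j$ using $r_j\le r_k$, whereas the paper uses the geometric weights $\theta^{j-k}$; both arguments work.
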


\begin{proof}
	By \eqref{eq:sola-tail-class},   we have $u\in L^m(\R^n)\cap L_{sp}^m(\R^n)$.
	Hence all $E_k$ are finite, and for  each $E_k$,
	Lemma~\ref{lem:affine-minimizer} provides a minimizing affine
	function
	\[
	\ell_k(x)=b_k+a_k\cdot(x-x_0).	\]

	Take $c:=(u)_{B_{2R}(x_0)}$. Since $c$ is an admissible affine competitor for
	$\Psi_m(u;x_0,r_0)$, where $r_0=R/2$, and
	\eqref{eq:mean-radius-restriction} applies between the radii
	$r_0$ and $2R$, we have
	\begin{equation}\label{eq:mean-initial-competitor}
		E_0
		+
		\frac1{r_0}
		\left[
		\left(
		\fint_{B_{r_0}(x_0)}
		|u-c|^m\,\diff x
		\right)^{1/m}
		+
		\Tail_{m,sp}(u-c;x_0,r_0)
		\right]
		\le
		C\mathcal A_m(u;x_0,2R).
	\end{equation}
To control the initial slope, apply
\eqref{eq:mean-affine-norm} to $\ell_0-c$. Since
\[
\ell_0-c=(\ell_0-u)+(u-c),
\]
the minimizing property of $\ell_0$ and
\eqref{eq:mean-initial-competitor} give
\[
\begin{aligned}
	r_0|a_0|
	&\le
	C\left(
	\fint_{B_{r_0}(x_0)}
	|\ell_0-c|^m\,\diff x
	\right)^{1/m}\\
	&\le
	Cr_0E_0+
	C\left(
	\fint_{B_{r_0}(x_0)}
	|u-c|^m\,\diff x
	\right)^{1/m}\le
	Cr_0\mathcal A_m(u;x_0,2R).
\end{aligned}
\]
Hence
\begin{equation}\label{eq:mean-initial-slope}
	|a_0|+E_0
	\le C\mathcal A_m(u;x_0,2R).
\end{equation}
	Since  $
	r_{k+1}=\theta r_k$
	and $B_{2r_k}(x_0)\Subset\Omega$, Lemma~\ref{lem:mean-one-step}
	gives
	\begin{equation}\label{eq:mean-scale-recurrence}
		E_{k+1}
		\le
		\frac12E_k+CD_k,
		\qquad k\ge0.
	\end{equation}
	Summing \eqref{eq:mean-scale-recurrence} for
	$k=0,\ldots,N-1$, we obtain
	\[
	\sum_{k=1}^{N}E_k
	\le
	\frac12\sum_{k=0}^{N-1}E_k
	+
	C\sum_{k=0}^{N-1}D_k.
	\]
	Absorbing the finite sum of excesses gives
	\begin{equation}\label{eq:mean-finite-excess-sum}
		\sum_{k=0}^{N}E_k
		\le
		2E_0
		+
		C\sum_{k=0}^{N-1}D_k.
	\end{equation}
	
	Because $
	2r_k=R\theta^k$,
	Lemma~\ref{lem:dyadic-potential} yields
	\begin{equation}\label{eq:mean-data-sum}
		\sum_{k=0}^{\infty}D_k
		\le
		C\Wpot_{\gamma,p}^{|\mu|}(x_0,2R)
		<\infty.
	\end{equation}
	Letting $N\to\infty$ in
	\eqref{eq:mean-finite-excess-sum} and using
	\eqref{eq:mean-initial-competitor}, we obtain
	\begin{equation}\label{eq:mean-excess-sum}
		\sum_{k=0}^{\infty}E_k
		\le
		C\left[
		\mathcal A_m(u;x_0,2R)
		+
		\Wpot_{\gamma,p}^{|\mu|}(x_0,2R)
		\right].
	\end{equation}
	In particular,
	\[
	E_k\longrightarrow0,
	\qquad
	D_k\longrightarrow0.
	\]
	
	We next compare the minimizing affine functions at two successive scales.	Applying \eqref{eq:mean-affine-norm} to
	$\ell_{k+1}-\ell_k$ on $B_{r_{k+1}}(x_0)$ gives
	\[
	\begin{aligned}
		|b_{k+1}-b_k|
		+
		r_{k+1}|a_{k+1}-a_k|\le
		C
		\left(
		\fint_{B_{r_{k+1}}(x_0)}
		|\ell_{k+1}-\ell_k|^m\,\diff x
		\right)^{1/m}.
	\end{aligned}
	\]
	Since
	\[
	\ell_{k+1}-\ell_k
	=
	(\ell_{k+1}-u)+(u-\ell_k),
	\]
	Minkowski's inequality and the minimizing properties give
	\[
	\begin{aligned}
		\left(
		\fint_{B_{r_{k+1}}(x_0)}
		|\ell_{k+1}-\ell_k|^m\,\diff x
		\right)^{1/m}&\le
		r_{k+1}E_{k+1}
		+
		\left(
		\frac{r_k}{r_{k+1}}
		\right)^{n/m}
		r_kE_k\\
		&=
		r_{k+1}E_{k+1}
		+
		\theta^{-1-n/m}r_{k+1}E_k.
	\end{aligned}
	\]
	Thus
\begin{equation}\label{eq:mean-affine-increments}
		\begin{aligned}
			|a_{k+1}-a_k|
			\le C_\theta(E_k+E_{k+1})\quad \text{and} \quad 
			|b_{k+1}-b_k|\le
			C_\theta r_{k+1}(E_k+E_{k+1}).
		\end{aligned}
	\end{equation}
	By \eqref{eq:mean-affine-increments} and
	\eqref{eq:mean-excess-sum}, the sequence $\{a_k\}$ is Cauchy.
	Consequently, there exists $a_*\in\R^n$ such that
	\[
	|a_*-a_k|
	\le C_\theta\sum_{j=k}^{\infty}E_j
	\longrightarrow0.
	\]
	The second estimate in
	\eqref{eq:mean-affine-increments} similarly shows that
	$b_k\to b_*$ for some $b_*\in\R$. Since
	$r_{j+1}/r_k=\theta^{j+1-k}$,
	\[
	\begin{aligned}
		\frac{|b_*-b_k|}{r_k}
		\le
		C_\theta
		\sum_{j=k}^{\infty}
		\frac{r_{j+1}}{r_k}
		(E_j+E_{j+1})\le
		C_\theta
		\sum_{j=k}^{\infty}
		\theta^{j-k}(E_j+E_{j+1})
		\longrightarrow0.
	\end{aligned}
	\]
	The last limit follows from $E_j\to0$ and the summability of the
	geometric weights. We have therefore proved
	\[
	E_k+D_k+|a_k-a_*|
	+
	\frac{|b_k-b_*|}{r_k}
	\longrightarrow0.
	\]
	
Take
	\[
	\ell_*(x):=b_*+a_*\cdot(x-x_0).
	\]
	Using
	\[
	u-\ell_*=(u-\ell_k)+(\ell_k-\ell_*),
	\]
	we obtain
	\[
	\begin{aligned}
		\frac1{r_k}
		\left(
		\fint_{B_{r_k}(x_0)}
		|u-\ell_*|^m\,\diff x
		\right)^{1/m}\le
		E_k
		+
		C|a_k-a_*|
		+
		C\frac{|b_k-b_*|}{r_k}
		\longrightarrow0.
	\end{aligned}
	\]
	If $r_{k+1}<r\le r_k$, then
	\[
	\begin{aligned}
		\frac1r
		\left(
		\fint_{B_r(x_0)}
		|u-\ell_*|^m\,\diff x
		\right)^{1/m}&\le
		\left(\frac{r_k}{r}\right)^{1+n/m}
		\frac1{r_k}
		\left(
		\fint_{B_{r_k}(x_0)}
		|u-\ell_*|^m\,\diff x
		\right)^{1/m}\\
		&\le
		\theta^{-1-n/m}
		\frac1{r_k}
		\left(
		\fint_{B_{r_k}(x_0)}
		|u-\ell_*|^m\,\diff x
		\right)^{1/m}.
	\end{aligned}
	\]
	This proves \eqref{eq:mean-first-order-expansion} for all radii
    tending to zero.
	
	If another pair $(\widetilde b,\widetilde a)$ satisfies
	\eqref{eq:mean-first-order-expansion}, then the affine function
	\[
	P(x)
	=
	(b_*-\widetilde b)
	+
	(a_*-\widetilde a)\cdot(x-x_0)
	\]
	satisfies
	\[
	\frac1r
	\left(
	\fint_{B_r(x_0)}|P|^m\,\diff x
	\right)^{1/m}
	\longrightarrow0.
	\]
	By \eqref{eq:mean-affine-norm},
	\[
	\frac{|b_*-\widetilde b|}{r}
	+
	|a_*-\widetilde a|
	\le
	\frac C r
	\left(
	\fint_{B_r(x_0)}|P|^m\,\diff x
	\right)^{1/m},
	\]
	and letting $r\to0$ gives  $
	b_*=\widetilde b$ and  $
	a_*=\widetilde a$.	
	Finally, summing the first estimate in
	\eqref{eq:mean-affine-increments} gives
	\[
	|a_k|
	\le
	|a_0|
	+
	C_\theta\sum_{j=0}^{\infty}E_j.
	\]
	Hence \eqref{eq:mean-initial-slope} and
	\eqref{eq:mean-excess-sum} imply
	\[
	|a_*|+\sup_{k\ge0}|a_k|
	+\sum_{k=0}^{\infty}E_k
	\le
	C\left[
	\mathcal A_m(u;x_0,2R)
	+
	\Wpot_{\gamma,p}^{|\mu|}(x_0,2R)
	\right],
	\]
	which proves \eqref{eq:mean-all-scale-bound}.
\end{proof}
\subsection{Existence of the weak gradient}
\begin{lemma}[Local integrability of the Wolff potential]
\label{lem:wolff-local-Lm}
Let $K\Subset\Omega$ be compact and let $T>0$ satisfy
$K_T:=\{y:\dist(y,K)\le T\}\Subset\Omega$. Then
\begin{equation}\label{eq:wolff-local-Lm}
\|\Wpot_{\gamma,p}^{|\mu|}(\,\cdot\,,T)\|_{L^m(K)}
\le CT^{\beta/m}|\mu|(K_T)^{1/m},
\end{equation}
where $C=C(n,p,s)$. In particular, this potential is finite almost
everywhere on $K$.
\end{lemma}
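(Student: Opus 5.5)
We bound the $L^m$ norm of the Wolff potential by Minkowski's integral inequality in the scale variable, followed by Fubini for each fixed radius. For $x\in K$,
\[
\Wpot_{\gamma,p}^{|\mu|}(x,T)=\int_0^T D_\mu(x,\rho)\frac{\diff\rho}{\rho},
\]
so Minkowski's inequality gives
\[
\|\Wpot_{\gamma,p}^{|\mu|}(\cdot,T)\|_{L^m(K)}
\le\int_0^T\|D_\mu(\cdot,\rho)\|_{L^m(K)}\frac{\diff\rho}{\rho}.
\]
Since $m=p-1>1$, Minkowski's inequality is available. Measurability of $x\mapsto|\mu|(B_\rho(x))$ is standard, because this map is lower semicontinuous.

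For fixed $\rho\le T$, we write
\[
\|D_\mu(\cdot,\rho)\|_{L^m(K)}^m=\rho^{\beta-n}\int_K|\mu|(B_\rho(x))\,\diff x .
\]
Expanding $|\mu|(B_\rho(x))=\int\mathbf 1_{\{|x-y|<\rho\}}\,\diff|\mu|(y)$ and applying Fubini, only points $y$ with $\dist(y,K)<\rho\le T$ contribute, so $y\in K_T$. This gives
\[
\int_K|\mu|(B_\rho(x))\,\diff x\le|B_\rho|\,|\mu|(K_T)=C\rho^n|\mu|(K_T).
\]
Therefore
\[
\|D_\mu(\cdot,\rho)\|_{L^m(K)}\le C\rho^{\beta/m}|\mu|(K_T)^{1/m}.
\]

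Integrating in $\rho$, we get
\[
\int_0^T\rho^{\beta/m-1}\,\diff\rho=\frac{m}{\beta}\,T^{\beta/m}.
\]
This integral is finite precisely because $\beta>0$, i.e.\ because $sp>p-1$. The constant therefore depends only on $n,p,s$, and this proves \eqref{eq:wolff-local-Lm}.

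Since $|\mu|$ is finite, the right-hand side of \eqref{eq:wolff-local-Lm} is finite. Hence the potential lies in $L^m(K)$, and in particular it is finite almost everywhere on $K$.

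The only delicate point is to justify the exchange of integrals. Rather than appealing to the integral form of Minkowski's inequality, one can apply Tonelli on the product space with $\diff x\,\diff|\mu|(y)$, and then estimate
\[
\Bigl\|\int_0^T\cdots\,\frac{\diff\rho}{\rho}\Bigr\|_{L^m}
\]
through the duality pairing with a nonnegative function $g\in L^{m'}(K)$. This gives the same bound, and I do not expect any genuine obstacle here.
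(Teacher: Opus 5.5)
Your proof is correct and follows the paper's argument essentially step for step. Tonelli at a fixed radius gives $\|D_\mu(\cdot,\rho)\|_{L^m(K)}\le C\rho^{\beta/m}|\mu|(K_T)^{1/m}$, and Minkowski's integral inequality in $\rho$ (valid since $m=p-1>1$) together with $\beta>0$ finishes; the only point to tighten is that Minkowski's inequality needs joint measurability of $(x,\rho)\mapsto|\mu|(B_\rho(x))$, which the same Fatou argument provides by showing joint lower semicontinuity, so the duality workaround at the end is unnecessary.
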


\begin{proof}
	For $x\in K$ and $0<t\le T$, we have
	\[
	|\mu|(B_t(x))
	=
	\int_{K_T}
	\mathbf 1_{\{|x-y|<t\}}\,\diff|\mu|(y).
	\]
Notice that  the map $(x,t)\mapsto D_\mu(x,t)$ is measurable.
	Tonelli's theorem gives
	\[
	\int_K|\mu|(B_t(x))\,\diff x
	=
	\int_{K_T}|K\cap B_t(y)|\,\diff|\mu|(y)
	\le |B_1|t^n|\mu|(K_T).
	\]
	Consequently, we have
	\[
	\begin{aligned}
		\|D_\mu(\cdot,t)\|_{L^m(K)}
		=
		\left(
		t^{\beta-n}
		\int_K|\mu|(B_t(x))\,\diff x
		\right)^{1/m}\le
		|B_1|^{1/m}t^{\beta/m}|\mu|(K_T)^{1/m}.
	\end{aligned}
	\]
	Since $\beta>0$,
	\[
	\int_0^T t^{\beta/m}\frac{\diff t}{t}
	=
	\frac{m}{\beta}T^{\beta/m}<\infty.
	\]
	Thus $t\mapsto\|D_\mu(\cdot,t)\|_{L^m(K)}$ is integrable
	with respect to $\diff t/t$ on $(0,T)$.
	As $m=p-1>1$, Minkowski's integral inequality yields
	\[
	\begin{aligned}
		\|\Wpot_{\gamma,p}^{|\mu|}(\cdot,T)\|_{L^m(K)}
		=
		\left\|
		\int_0^T D_\mu(\cdot,t)\frac{\diff t}{t}
		\right\|_{L^m(K)}\le
		\int_0^T
		\|D_\mu(\cdot,t)\|_{L^m(K)}
		\frac{\diff t}{t}\le
		CT^{\beta/m}|\mu|(K_T)^{1/m}.
	\end{aligned}
	\]
	This proves \eqref{eq:wolff-local-Lm}. In particular,
	$\Wpot_{\gamma,p}^{|\mu|}(\cdot,T)$ is finite almost
	everywhere on $K$.
\end{proof}

\begin{lemma}[Weak differentiability of SOLA]
	\label{lem:sola-weak-gradient}
	Let $u$ be a SOLA of \eqref{eq:main}.
	Fix $\eta\in C_c^\infty(B_1)$ with $\eta\ge0$ and
	$\int_{\R^n}\eta\,\diff x=1$, and define
	\[
	\eta_\varepsilon(x):=\varepsilon^{-n}\eta(x/\varepsilon),
	\qquad
	u_\varepsilon:=u*\eta_\varepsilon.
	\]
	For every $x_0\in\Omega$ and $R>0$ such that
	$B_{2R}(x_0)\Subset\Omega$ and
	$\Wpot_{\gamma,p}^{|\mu|}(x_0,2R)<\infty$, we have
	\begin{equation}\label{eq:mollified-gradient-pointwise-limit}
		\nabla u_\varepsilon(x_0)\longrightarrow a_*(x_0)
		\qquad\text{as }\varepsilon\to 0,
	\end{equation}
	where $a_*(x_0)$ is the slope of the affine expansion
	given by Lemma~\ref{lem:mean-affine-expansion}.	Moreover, every SOLA $u$ of \eqref{eq:main} satisfies
	\[
	u\in W^{1,m}_{\mathrm{loc}}(\Omega).
	\]
\end{lemma}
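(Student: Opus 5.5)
The idea is to express $\nabla u_\varepsilon(x_0)$ through the affine approximations of Lemma~\ref{lem:mean-affine-expansion}. Since $\int\nabla\eta_\varepsilon=0$ and $\int \nabla\eta_\varepsilon(x_0-y)\,(y-x_0)_i\,\diff y=e_i$ (integration by parts), every affine function $\ell(y)=b+a\cdot(y-x_0)$ satisfies $\nabla(\ell*\eta_\varepsilon)(x_0)=a$. Hence, for any affine $\ell$ with slope $a$,
\[
\nabla u_\varepsilon(x_0)-a=\int_{B_\varepsilon(x_0)}\nabla\eta_\varepsilon(x_0-y)\,(u-\ell)(y)\,\diff y,
\]
and therefore
\[
|\nabla u_\varepsilon(x_0)-a|\le C\varepsilon^{-n-1}\int_{B_\varepsilon(x_0)}|u-\ell|\,\diff y\le \frac{C}{\varepsilon}\Bigl(\fint_{B_\varepsilon(x_0)}|u-\ell|^m\,\diff y\Bigr)^{1/m},
\]
using Hölder with $m>1$ and $C=C(n,\eta)$.

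For the pointwise limit \eqref{eq:mollified-gradient-pointwise-limit}, take $\ell=\ell_*=b_*+a_*\cdot(\cdot-x_0)$ from Lemma~\ref{lem:mean-affine-expansion}. Then \eqref{eq:mean-first-order-expansion} makes the right-hand side tend to $0$ as $\varepsilon\to0$.

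For a uniform bound, let $\varepsilon<R/2$ and pick $k$ with $r_{k+1}<\varepsilon\le r_k$. Take $\ell=\ell_k$. Enlarging the ball to $B_{r_k}$ costs a factor $\theta^{-1-n/m}$, so
\[
|\nabla u_\varepsilon(x_0)-a_k|\le CE_k .
\]
By \eqref{eq:mean-all-scale-bound},
\[
|\nabla u_\varepsilon(x_0)|\le C\bigl[\mathcal A_m(u;x_0,2R)+\Wpot_{\gamma,p}^{|\mu|}(x_0,2R)\bigr]\qquad\text{for all }\varepsilon<R/2 .
\]
This bound must hold uniformly over points of a compact set. Fix a compact $K\Subset\Omega$ and $T>0$ with $K_{2T}\Subset\Omega$, and set $R=T/2$, so that $B_{2R}(x)\Subset\Omega$ for $x\in K$. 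Define
\[
G(x):=\mathcal A_m(u;x,T)+\Wpot_{\gamma,p}^{|\mu|}(x,T).
\]
We need $G\in L^m(K)$. The potential part lies in $L^m(K)$ by Lemma~\ref{lem:wolff-local-Lm}. For the $\mathcal A_m$ part, I would bound it pointwise, for $x\in K$, by
\[
C\,T^{-1-n/m}\|u\|_{L^m(\R^n)}+C\,T^{-1}\Tail_{m,sp}(u;x,T)+C\,T^{-1}|(u)_{B_T(x)}| .
\]
Each of these terms is bounded uniformly in $x\in K$ because $u\in L^m(\R^n)\cap L^m_{sp}(\R^n)$ by \eqref{eq:sola-tail-class}. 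In particular, for the tail, $|y-x|\ge T$ gives $|y-x|^{-n-sp}\le T^{-n-sp}$, so the tail is at most $T^{-n/m}\|u\|_{L^m}$. So $G\in L^m(K)$. Consequently, for a.e. $x\in K$ (where the potential is finite) and all $\varepsilon<T/4$,
\[
|\nabla u_\varepsilon(x)|\le CG(x),\qquad \nabla u_\varepsilon(x)\to a_*(x).
\]
This is the domination \eqref{eq:mollified-gradient-domination}. Measurability of $a_*$ follows since it is an a.e. limit of continuous functions.

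Finally, for $\varphi\in C_c^\infty(\Omega)$ with support in the interior of $K$, we have
\[
\int u_\varepsilon\,\partial_i\varphi=-\int\partial_iu_\varepsilon\,\varphi .
\]
The left side converges to $\int u\,\partial_i\varphi$ since $u_\varepsilon\to u$ in $L^1_{\mathrm{loc}}$. The right side converges to $-\int a_{*,i}\varphi$ by dominated convergence with majorant $CG|\varphi|\in L^1$. Thus $\nabla u=a_*\in L^m_{\mathrm{loc}}$ by Fatou, and $u\in W^{1,m}_{\mathrm{loc}}(\Omega)$.

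The main obstacle is the locally uniform, $L^m$-integrable domination of $\nabla u_\varepsilon$. This needs the constant in \eqref{eq:mean-all-scale-bound} to be independent of $x_0$, and a careful handling of the small set of points where the potential is infinite. That set is null by Lemma~\ref{lem:wolff-local-Lm}, which is all that dominated convergence requires.
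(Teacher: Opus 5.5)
Your proposal is correct and follows essentially the same route as the paper: the same convolution identity $\nabla(\ell*\eta_\varepsilon)(x)=a$, the choice $r_{k+1}<\varepsilon\le r_k$ giving $|\nabla u_\varepsilon(x)-a_k(x)|\le CE_k(x)$, the dominating function $\mathcal A_m(u;x,2R)+\Wpot_{\gamma,p}^{|\mu|}(x,2R)\in L^m(K)$ (via the uniform bound on $\mathcal A_m$ and Lemma~\ref{lem:wolff-local-Lm}), and dominated convergence in the integration-by-parts identity. The only differences are cosmetic: you obtain the pointwise limit directly from \eqref{eq:mean-first-order-expansion} with $\ell_*$ rather than through $\ell_k$ and $a_k\to a_*$, and you pass to the limit in the weak formulation with an $L^1$ majorant instead of first proving $\nabla u_\varepsilon\to G$ strongly in $L^m(K)$.
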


\begin{proof}
	By Definition~\ref{def:sola}, $u\in L^m(\R^n)$. Fix a compact set $K\Subset\Omega$ and choose $R>0$ such that
	\[
	K_{2R}:=\{y\in\R^n:\dist(y,K)\le2R\}\Subset\Omega.
	\]
	For $x\in K$,  we define
	\[
	c_x:=(u)_{B_{2R}(x)},
	\qquad
	H_R(x):=
	\mathcal A_m(u;x,2R)
	+
	\Wpot_{\gamma,p}^{|\mu|}(x,2R).
	\]
	H\"older's and Minkowski's inequalities give
	\[
	\begin{aligned}
		|c_x|+	\left(
		\fint_{B_{2R}(x)}|u-c_x|^m\,\diff y
		\right)^{1/m}\le
		CR^{-n/m}\|u\|_{L^m(\R^n)}.
	\end{aligned}
	\]
	For the tail, we have
	\[
	\begin{aligned}
		\Tail_{m,sp}(u-c_x;x,2R)^m
		\le
		CR^{-n}\|u\|_{L^m(\R^n)}^m
		+C|c_x|^m\le
		CR^{-n}\|u\|_{L^m(\R^n)}^m.
	\end{aligned}
	\]
Combining these bounds gives
\begin{equation}\label{eq:background-fixed-radius-bound}
		\sup_{x\in K}\mathcal A_m(u;x,2R)
		\le
		CR^{-1-n/m}\|u\|_{L^m(\R^n)}.
	\end{equation}
Moreover,  Lemma~\ref{lem:wolff-local-Lm} gives
	\begin{equation}\label{eq:gradient-dominating-function}
		H_R\in L^m(K).
	\end{equation}
	In particular, $H_R(x)<\infty$ for almost every $x\in K$.
	
	Fix $x\in K$ such that $H_R(x)<\infty$.
	Lemma~\ref{lem:mean-affine-expansion}, applied with centre $x$,
	provides the radii and minimizing affine functions
	\[
	r_k:=\frac R2\theta^k,
	\qquad
	\ell_k^x(y):=b_k(x)+a_k(x)\cdot(y-x),
	\qquad
	E_k(x):=\Psi_m(u;x,r_k).
	\]
	They satisfy
	\[
	E_k(x)\longrightarrow0,
	\qquad
	a_k(x)\longrightarrow a_*(x),
	\]
	and \eqref{eq:mean-all-scale-bound} gives
	\[
	\sup_{k\ge0}|a_k(x)|
	+
	\sum_{k=0}^{\infty}E_k(x)
	\le CH_R(x).
	\]
	
	Recall that $u_\varepsilon=u*\eta_\varepsilon$.
	Since $u$ is locally integrable and $\eta_\varepsilon$ is
	smooth with compact support, differentiation under the
	integral gives
	\[
	\nabla u_\varepsilon(z)
	=
	\int_{\R^n}u(y)\nabla\eta_\varepsilon(z-y)\,\diff y.
	\]
	For fixed $x$ and $k$, a change of variables gives
	\[
	\begin{aligned}
		(\ell_k^x*\eta_\varepsilon)(z)
		=
		\int_{\R^n}\ell_k^x(z-h)\eta_\varepsilon(h)\,\diff h=
		\ell_k^x(z)
		-
		a_k(x)\cdot
		\int_{\R^n}h\eta_\varepsilon(h)\,\diff h,
	\end{aligned}
	\]
	where we used $\int_{\R^n}\eta_\varepsilon=1$.
	The last term is independent of $z$, and hence
	\[
	\nabla_z(\ell_k^x*\eta_\varepsilon)(z)=a_k(x).
	\]
	Thus, with the centre $x$ fixed,
	\[
	\begin{aligned}
		\nabla u_\varepsilon(x)-a_k(x)
		=
		\int_{\R^n}
		[u(y)-\ell_k^x(y)]
		\nabla\eta_\varepsilon(x-y)\,\diff y=
		\int_{B_\varepsilon(x)}
		[u(y)-\ell_k^x(y)]
		\nabla\eta_\varepsilon(x-y)\,\diff y.
	\end{aligned}
	\]
	
	For $0<\varepsilon<R/2$, choose $k$ such that $
	r_{k+1}<\varepsilon\le r_k$.
	Then $B_\varepsilon(x)\subset B_{r_k}(x)$ and
	$r_k/\varepsilon<\theta^{-1}$.
	Since $
	|\nabla\eta_\varepsilon|
	\le C_\eta\varepsilon^{-n-1}$,
	H\"older's inequality gives
	\[
	\begin{aligned}
		|\nabla u_\varepsilon(x)-a_k(x)|\le
		C_\eta
		\left(\frac{r_k}{\varepsilon}\right)^{1+n/m}
		E_k(x)\le C_{\theta,\eta}E_k(x).
	\end{aligned}
	\]
	The constants in these estimates may depend on the fixed
	function $\eta$, but not on $x$, $k$, or $\varepsilon$.
	
	As $\varepsilon\to0$, the corresponding index $k$ tends to
	infinity. Hence
	\[
	\begin{aligned}
		|\nabla u_\varepsilon(x)-a_*(x)|
		\le
		|\nabla u_\varepsilon(x)-a_k(x)|
		+
		|a_k(x)-a_*(x)|
		\longrightarrow0.
	\end{aligned}
	\]
	This argument applies at every point satisfying the hypotheses
	of Lemma~\ref{lem:mean-affine-expansion}, and proves
	\eqref{eq:mollified-gradient-pointwise-limit}.

	Combining the above estimate with \eqref{eq:mean-all-scale-bound} gives, for almost every $x\in K$ and $0<\varepsilon<R/2$,
	\begin{equation}\label{eq:mollified-gradient-domination}
	|\nabla u_\varepsilon(x)|
	\le |a_k(x)|+|\nabla u_\varepsilon(x)-a_k(x)|
	\le C_{\theta,\eta}H_R(x).
	\end{equation}
	We define
	\[
	K_0:=\{x\in K:H_R(x)<\infty\}.
	\]
	By \eqref{eq:gradient-dominating-function}, we have  $
	|K\setminus K_0|=0$. For every $x\in K_0$, 	\eqref{eq:mollified-gradient-pointwise-limit} shows that
	\[
	\nabla u_\varepsilon(x)\longrightarrow a_*(x)
	\qquad\text{as }\varepsilon\to0.
	\]

	For every $x\in K_0$, \eqref{eq:mollified-gradient-pointwise-limit}
	gives
	\[
	\nabla u_{2^{-j}}(x)\longrightarrow a_*(x)
	\qquad\text{as }j\to\infty.
	\]
	Since $|K\setminus K_0|=0$ and the compact set
	$K\Subset\Omega$ was arbitrary, the sequence
	$\{\nabla u_{2^{-j}}\}$ converges almost everywhere in $\Omega$.
	Define
	\[
	G(x):=
	\begin{cases}
		\displaystyle\lim_{j\to\infty}\nabla u_{2^{-j}}(x),
		&\text{if this limit exists in }\R^n,\\
		0,&\text{otherwise}.
	\end{cases}
	\]
	Each $u_{2^{-j}}$ is smooth, so its gradient is measurable.
	Thus $G$ is measurable as an almost everywhere limit of
	measurable functions.
	
	For every $x\in K_0$, we have $G(x)=a_*(x)$.
	Returning to \eqref{eq:mollified-gradient-pointwise-limit},
	we obtain
	\[
	\nabla u_\varepsilon(x)\longrightarrow G(x)
	\qquad\text{as }\varepsilon\to0
	\quad\text{for every }x\in K_0.
	\]
	Moreover, \eqref{eq:mollified-gradient-domination} gives
	\[
	|\nabla u_\varepsilon(x)|
	\le C_{\theta,\eta}H_R(x)
	\]
	for $x\in K_0$ and all sufficiently small $\varepsilon$.
	Passing to the pointwise limit yields
	\[
	|G(x)|\le C_{\theta,\eta}H_R(x)
	\qquad\text{for }x\in K_0.
	\]
	Since $|K\setminus K_0|=0$ and
	$H_R^m\in L^1(K)$ by
	\eqref{eq:gradient-dominating-function}, it follows that
	$G\in L^m(K;\R^n)$.
	
	For almost every $x\in K$, we also have
	\[
	\begin{aligned}
		|\nabla u_\varepsilon(x)-G(x)|^m
		\le
		\bigl(|\nabla u_\varepsilon(x)|+|G(x)|\bigr)^m\le
		(2C_{\theta,\eta})^m H_R(x)^m.
	\end{aligned}
	\]
	The right-hand side is integrable on $K$ and independent
	of $\varepsilon$. Hence the dominated convergence theorem gives
	\[
	\int_K|\nabla u_\varepsilon-G|^m\,\diff x
	\longrightarrow0.
	\]
	Therefore,
	\[
	\nabla u_\varepsilon\longrightarrow G
	\qquad\text{strongly in }L^m(K;\R^n).
	\]
	Since $u\in L^m_{\mathrm{loc}}(\Omega)$, the approximation
	property of mollification also gives
	\[
	u_\varepsilon\longrightarrow u
	\qquad\text{strongly in }L^m(K).
	\]
	
	We now verify that $G$ is the weak gradient of $u$.
	Write $G=(G_1,\ldots,G_n)$ and set $m'=m/(m-1)$.
	Let $\varphi\in C_c^\infty(\Omega)$, and choose
	$K\Subset\Omega$ whose interior contains
	$\operatorname{supp}\varphi$.
	For every $i=1,\ldots,n$, integration by parts gives
	\[
	\int_\Omega \partial_i u_\varepsilon\,\varphi\,\diff x
	+
	\int_\Omega u_\varepsilon\,\partial_i\varphi\,\diff x
	=0.
	\]
	Consequently, by H\"older's inequality,
	\[
	\begin{aligned}
		&\left|
		\int_\Omega G_i\varphi\,\diff x
		+
		\int_\Omega u\,\partial_i\varphi\,\diff x
		\right|\\
		&\quad=
		\left|
		\int_\Omega
		(G_i-\partial_i u_\varepsilon)\varphi\,\diff x
		+
		\int_\Omega
		(u-u_\varepsilon)\partial_i\varphi\,\diff x
		\right|\\
		&\quad\le
		\|\nabla u_\varepsilon-G\|_{L^m(K)}
		\|\varphi\|_{L^{m'}(K)}
		+
		\|u_\varepsilon-u\|_{L^m(K)}
		\|\partial_i\varphi\|_{L^{m'}(K)}
		\longrightarrow0.
	\end{aligned}
	\]
	The expression on the left does not depend on $\varepsilon$. Thus, we obtain
	\[
	\int_\Omega G_i\varphi\,\diff x
	=
	-\int_\Omega u\,\partial_i\varphi\,\diff x
	\qquad
	\text{for every }\varphi\in C_c^\infty(\Omega).
	\]
	This proves that $G_i$ is the weak derivative $\partial_i u$.
	Since $u,G\in L^m_{\mathrm{loc}}(\Omega)$, we conclude that
	\[
	u\in W^{1,m}_{\mathrm{loc}}(\Omega),
	\qquad
	G=\nabla u
	\quad\text{almost everywhere in }\Omega.
	\]
\end{proof}
\subsection{The estimate at Lebesgue points}
\begin{proof}[Proof of Theorem~\ref{thm:mean-gradient}]
Weak differentiability follows from
Lemma~\ref{lem:sola-weak-gradient}. Fix a Lebesgue point $x_0$ of
$\nabla u$ and $B_{2R}(x_0)\Subset\Omega$.
If $\Wpot_{\gamma,p}^{|\mu|}(x_0,2R)=\infty$, the estimate is immediate.
Otherwise, Lemma~\ref{lem:mean-affine-expansion} provides the slope $a_*$
and the bound \eqref{eq:mean-all-scale-bound}.
For $0<\varepsilon<R$, the convolution identity for weak derivatives and
the Lebesgue point property give
\[
\begin{aligned}
|\nabla u_\varepsilon(x_0)-\nabla u(x_0)|
&\le \int_{B_\varepsilon(x_0)}\eta_\varepsilon(x_0-y)
|\nabla u(y)-\nabla u(x_0)|\,\diff y\\
&\le C\fint_{B_\varepsilon(x_0)}
|\nabla u(y)-\nabla u(x_0)|\,\diff y\longrightarrow0.
\end{aligned}
\]
Together with \eqref{eq:mollified-gradient-pointwise-limit}, this shows
that $a_*=\nabla u(x_0)$. The bound
\eqref{eq:mean-all-scale-bound} is therefore
\eqref{eq:mean-gradient-bound}. By the Lebesgue differentiation theorem, this estimate holds for
almost every $x_0\in\Omega$.
\end{proof}

\section{Classical differentiability under \texorpdfstring{$sp>n$}{sp>n}}
\label{sec:classical-gradient}
Throughout this section we assume \eqref{eq:gradient-parameter-range}
and $sp>n$. We first establish comparison estimates for local weak
solutions and then prove the local energy regularity needed to apply
them to SOLA. The affine functions from
Lemma~\ref{lem:mean-affine-expansion} will then yield the classical
derivative. The argument requires no global energy assumption on the SOLA.

\subsection{Measure testing and comparison}
\begin{lemma}[Fractional Morrey estimate]\label{lem:fractional-morrey}
Let $\tau=s-n/p>0$. Every $w\in W^{s,p}(\R^n)$ has a unique bounded
continuous representative satisfying
\begin{equation}\label{eq:fractional-morrey}
[w]_{C^{0,\tau}(\R^n)}\le C[w]_{W^{s,p}(\R^n)}.
\end{equation}
For $w\in W_0^{s,p}(B_r(x_0))$, this representative vanishes on
$\R^n\setminus B_r(x_0)$ and
\begin{equation}\label{eq:fractional-morrey-zero}
\|w\|_{L^\infty(B_r(x_0))}
\le Cr^{s-n/p}[w]_{W^{s,p}(\R^n)}.
\end{equation}
Consequently, every finite signed measure $\nu$ on $B_r(x_0)$ belongs
to $(W_0^{s,p}(B_r(x_0)))'$, with
\begin{equation}\label{eq:measure-dual-local}
\left|\int_{B_r(x_0)}\varphi\,\diff\nu\right|
\le Cr^{s-n/p}|\nu|(B_r(x_0))
[\varphi]_{W^{s,p}(\R^n)}.
\end{equation}
The measure integral is taken with the continuous representative of
$\varphi$. Here and below the constants in the estimates depend only
on $n,p,s$.
\end{lemma}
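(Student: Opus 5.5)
The plan is to prove the Campanato-type oscillation bound
\[
|w(x)-w(y)|\le C|x-y|^{\tau}[w]_{W^{s,p}(\R^n)},\qquad \tau=s-\frac np,
\]
first for Lebesgue points of $w$, and then to deduce the remaining three assertions from it.

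\textbf{Step 1: the oscillation bound.} Fix a ball $B_\rho(z)$. Lemma~\ref{lem:fractional-poincare} gives
\[
\fint_{B_\rho(z)}|w-(w)_{B_\rho(z)}|\,\diff x\le C\rho^{s-n/p}[w]_{W^{s,p}(B_\rho(z))}\le C\rho^{\tau}[w]_{W^{s,p}(\R^n)}.
\]
For consecutive dyadic radii $\rho_k=2^{-k}\rho$, I compare averages in the standard way:
\[
|(w)_{B_{\rho_{k+1}}(z)}-(w)_{B_{\rho_k}(z)}|\le 2^n\fint_{B_{\rho_k}(z)}|w-(w)_{B_{\rho_k}(z)}|\,\diff x\le C\rho_k^\tau[w]_{W^{s,p}(\R^n)}.
\]
Since $\tau>0$, this series is geometric. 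Hence at every Lebesgue point $z$ we get $|w(z)-(w)_{B_\rho(z)}|\le C\rho^\tau[w]_{W^{s,p}(\R^n)}$.

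Now take two Lebesgue points $x,y$ and set $\rho=|x-y|$. I compare both $(w)_{B_\rho(x)}$ and $(w)_{B_\rho(y)}$ with $(w)_{B_{2\rho}(x)}$, which contains both balls. This gives the Hölder bound on the set of Lebesgue points. That set has full measure and is therefore dense, so $w$ extends uniquely to a continuous function, and this is the representative satisfying \eqref{eq:fractional-morrey}. Uniqueness holds because two continuous functions that agree almost everywhere are equal. Boundedness follows from $w\in L^p$ together with the uniform Hölder bound: $|w(x)|\le |(w)_{B_1(x)}|+C[w]_{W^{s,p}(\R^n)}\le C\|w\|_{L^p}+C[w]_{W^{s,p}(\R^n)}$.

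\textbf{Step 2: the estimate \eqref{eq:fractional-morrey-zero}.} Let $w\in W_0^{s,p}(B_r(x_0))$. Then $w=0$ almost everywhere on the open set $\R^n\setminus\overline{B_r(x_0)}$, so by continuity the representative vanishes on $\R^n\setminus B_r(x_0)$. The boundary sphere is handled by continuity as well. For $x\in B_r(x_0)$, pick a point $y$ with $|y-x_0|=r$ (or just outside the ball) and $|x-y|\le 2r$. Then
\[
|w(x)|=|w(x)-w(y)|\le C(2r)^{\tau}[w]_{W^{s,p}(\R^n)}.
\]

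\textbf{Step 3: the dual estimate \eqref{eq:measure-dual-local}.} For $\varphi\in W_0^{s,p}(B_r(x_0))$, integrate the continuous representative against $\nu$ and apply Step 2:
\[
\left|\int_{B_r(x_0)}\varphi\,\diff\nu\right|\le\|\varphi\|_{L^\infty(B_r(x_0))}\,|\nu|(B_r(x_0))\le Cr^{s-n/p}|\nu|(B_r(x_0))[\varphi]_{W^{s,p}(\R^n)}.
\]
It remains to check that $\varphi\mapsto\int\varphi\,\diff\nu$ is well defined and linear on $W_0^{s,p}(B_r(x_0))$. Linearity is clear, since the continuous representative of a sum is the sum of the representatives. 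Boundedness is exactly the displayed estimate. The seminorm is a norm on $W_0^{s,p}(B_r(x_0))$ by Lemma~\ref{lem:fractional-poincare}, so the functional belongs to $(W_0^{s,p}(B_r(x_0)))'$.

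\textbf{Main obstacle.} The only delicate point is the telescoping argument in Step 1, and specifically the requirement $\tau>0$, which is exactly the hypothesis $sp>n$. The rest of the argument is routine bookkeeping.
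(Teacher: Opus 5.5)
Your proof is correct. After the H\"older bound it follows the paper step by step:
\begin{itemize}
\item boundedness from averages over $B_1(x)$;
\item vanishing outside $B_r(x_0)$ by continuity;
\item comparison with a point of $\partial B_r(x_0)$, where your choice of a point at distance $\le 2r$ instead of the nearest one only changes the constant by $2^\tau$;
\item the duality bound, where you additionally check that $[\,\cdot\,]_{W^{s,p}(\R^n)}$ is a norm on $W_0^{s,p}(B_r(x_0))$, so the functional really lies in the dual.
\end{itemize}

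The genuine difference is in how \eqref{eq:fractional-morrey} itself is obtained. The paper does not prove it. It takes from \cite[Proposition~2.2]{Nowak2023} a $C^{0,\tau}$ representative $w_k$ on each ball $B_k(0)$ with $[w_k]_{C^{0,\tau}(B_k(0))}\le C[w]_{W^{s,p}(B_k(0))}$, and glues these into one global representative. You instead derive the estimate from scratch by a Campanato-type telescoping of averages over dyadic balls. This uses only the Poincar\'e inequality of Lemma~\ref{lem:fractional-poincare}, followed by extension from the dense set of Lebesgue points.

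The paper's route is shorter, but it rests on an external result and needs the gluing step. Yours is self-contained and works globally from the outset. Since every ball it uses lies in $B_{2|x-y|}(x)$, it even gives the localized bound $|w(x)-w(y)|\le C|x-y|^\tau[w]_{W^{s,p}(B_{2|x-y|}(x))}$. That is essentially the local Morrey embedding the paper later invokes without proof in Section~\ref{sec:classical-gradient}.
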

\begin{proof}
By~\cite[Proposition~2.2]{Nowak2023}, for every $k\in\mathbb N$ there is a representative $w_k\in C^{0,\tau}(B_k(0))$ such that
\[
[w_k]_{C^{0,\tau}(B_k(0))}
\le
C[w]_{W^{s,p}(B_k(0))}
\le
C[w]_{W^{s,p}(\R^n)}.
\]
If $j<k$, then $w_j=w_k$ on $B_j(0)$, since both are continuous representatives of the same function. Hence $\{w_k\}$ defines a continuous representative $w$ on $\R^n$.

For arbitrary $x,y\in\R^n$, choosing $k$ with $x,y\in B_k(0)$ gives
\[
|w(x)-w(y)|
\le
C[w]_{W^{s,p}(\R^n)}|x-y|^\tau,
\]
which is \eqref{eq:fractional-morrey}. Moreover,
\[
|w(x)|
\le
\fint_{B_1(x)}|w(y)|\,\dd y
+
\fint_{B_1(x)}|w(x)-w(y)|\,\dd y
\le
C\|w\|_{L^p(\R^n)}
+
C[w]_{C^{0,\tau}(\R^n)},
\]
so the representative is bounded. Uniqueness follows from continuity.

Now let $w\in W^{s,p}_0(B_r(x_0))$. Since $w=0$ a.e. outside $B_r(x_0)$ and the representative is continuous, $w=0$ on $\R^n\setminus B_r(x_0)$. For $x\in B_r(x_0)$, choose $y_x\in\partial B_r(x_0)$ with
\[
|x-y_x|
=
\dist(x,\partial B_r(x_0))
\le r.
\]
Then $w(y_x)=0$, and \eqref{eq:fractional-morrey} gives
\[
|w(x)|
\le
[w]_{C^{0,\tau}(\R^n)}|x-y_x|^\tau
\le
Cr^{s-n/p}[w]_{W^{s,p}(\R^n)}.
\]
Taking the supremum proves \eqref{eq:fractional-morrey-zero}.
Finally,
\[
\left|\int_{B_r(x_0)}\varphi\,\diff\nu\right|
\le |\nu|(B_r(x_0))\|\varphi\|_{L^\infty(B_r(x_0))},
\]
and \eqref{eq:fractional-morrey-zero} gives
\eqref{eq:measure-dual-local}.
\end{proof}

We first justify the weak formulation under local energy regularity.  Let $B=B_r(x_0)$ and
$U\in W^{s,p}(B_{2r}(x_0))\cap L_{sp}^m(\R^n)$.
For $\varphi\in W_0^{s,p}(B)$, write
\[
\mathcal B_U(\varphi):=
\iint_{\R^n\times\R^n}
\frac{J_p(\delta U(x,y))\delta\varphi(x,y)}{|x-y|^{n+sp}}
\,\diff x\diff y.
\]
This integral is absolutely convergent. Indeed, local Morrey embedding
gives $U\in L^\infty(B)$, and splitting the  integration domain into
$B_{2r}(x_0)\times B_{2r}(x_0)$ and the complement yields
\begin{align}\label{eq:local-weak-pairing-continuity}
	\begin{split}
&\iint_{\R^n\times\R^n}
\frac{|\delta U|^m|\delta\varphi|}{|x-y|^{n+sp}}
\,\diff x\diff y\\
&\quad\le
[U]_{W^{s,p}(B_{2r}(x_0))}^{m}
[\varphi]_{W^{s,p}(\R^n)}+
Cr^{-sp}\bigl[\|U\|_{L^\infty(B)}^m
+\Tail_{m,sp}(U;x_0,2r)^m\bigr]
\|\varphi\|_{L^1(B)}.
\end{split}
\end{align}
Together with \eqref{eq:measure-dual-local}, this allows a weak
identity valid for smooth test functions to extend by density
to every $\varphi\in W_0^{s,p}(B)$.
The same argument remains valid if $B_{2r}(x_0)$ is replaced by
a larger concentric ball compactly containing $B$.

We shall also use the following estimate obtained in the proof
of Lemma~\ref{lem:mean-homogeneous-decay}. If $v$ satisfies the
assumptions of that lemma in $B_{2a}(x_0)$, then for every
affine function $\ell$,
\begin{equation}\label{eq:homogeneous-fixed-affine-supremum}
	\|v-\ell\|_{L^\infty(B_{a/2}(x_0))}
	\le
	C\left[
	\left(
	\fint_{B_a(x_0)}
	|v-\ell|^m\,\diff x
	\right)^{1/m}
	+
	\Tail_{m,sp}(v-\ell;x_0,a)
	\right].
\end{equation}
This follows from \eqref{eq:mean-local-supremum} after changing
the centre and radius. Since that estimate holds for every
affine function before the excess is minimized, the constant
is independent of the slope of $\ell$.

\begin{lemma}[Local comparison and affine approximation]
\label{lem:uniform-replacement-comparison}
Let
\[
U\in W^{s,p}(B_{2r}(x_0))\cap L_{sp}^m(\R^n)
\]
be a local weak solution of $(-\Delta_p)^sU=\nu$ in $B_{2r}(x_0)$,
where $\nu$ is a finite signed measure. There is a unique function
\[
v\in U+W_0^{s,p}(B_r(x_0))
\]
which is fractional $p$-harmonic in $B_r(x_0)$. It satisfies
\begin{equation}\label{eq:measure-uniform-comparison}
\frac1r\|U-v\|_{L^\infty(B_r(x_0))}\le CD_\nu(x_0,r)
\end{equation}
and, for $0<\rho\le r/2$,
\begin{equation}\label{eq:measure-comparison-tail}
\frac1\rho\Tail_{m,sp}(U-v;x_0,\rho)
\le C\frac r\rho D_\nu(x_0,r).
\end{equation}
For every affine function $\ell$, one also has
\begin{align}\label{eq:energy-mean-to-uniform}
	\begin{split}
\frac1r\|U-\ell\|_{L^\infty(B_{r/8}(x_0))}
&\le\frac Cr\left[
\left(\fint_{B_r(x_0)}|U-\ell|^m\,\diff x\right)^{1/m}
+\Tail_{m,sp}(U-\ell;x_0,r)\right]\\
&\quad+CD_\nu(x_0,r).
\end{split}
\end{align}
\end{lemma}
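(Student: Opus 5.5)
The plan has three parts: build the replacement by minimization, prove the two comparison estimates by energy testing plus the Morrey embedding, and then obtain \eqref{eq:energy-mean-to-uniform} from the homogeneous estimate \eqref{eq:homogeneous-fixed-affine-supremum} and the comparison.

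\emph{Existence and uniqueness of $v$.} We minimize $\mathscr E$ over $U+W_0^{s,p}(B_r(x_0))$, now with $U$ only locally of class $W^{s,p}$. The functional $\mathscr E$ may be infinite, so we minimize the localized energy instead: the integral over $\mathcal Q(B_r)$ of $|\delta w|^p|x-y|^{-n-sp}$, with the exterior-exterior part removed. This localized energy is finite for $w=U$ by \eqref{eq:local-weak-pairing-continuity}-type bounds, since $U\in L^\infty(B)\cap L^m_{sp}$.

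\begin{itemize}
\item Coercivity comes from Lemma~\ref{lem:fractional-poincare}.
\item Strict convexity gives uniqueness.
\item The Euler--Lagrange equation gives fractional $p$-harmonicity in $B_r(x_0)$.
\end{itemize}

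\emph{The comparison estimates.} Set $w=U-v\in W_0^{s,p}(B_r)$. Test the difference of the two weak formulations with $w$ itself. This test is admissible by the density extension discussed before the lemma, using \eqref{eq:measure-dual-local}.

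By \eqref{eq:pcoercive}, the left side bounds $[w]^p_{W^{s,p}(\R^n)}$. The measure term is bounded via \eqref{eq:measure-dual-local} by
\[
Cr^{s-n/p}|\nu|(B_r)[w]_{W^{s,p}(\R^n)}.
\]
Absorbing gives
\[
[w]_{W^{s,p}}^{p-1}\le Cr^{s-n/p}|\nu|(B_r).
\]
Then \eqref{eq:fractional-morrey-zero} yields
\[
\|w\|_{L^\infty}\le Cr^{s-n/p}\bigl(r^{s-n/p}|\nu|(B_r)\bigr)^{1/(p-1)}=C\bigl(r^{sp-n}|\nu|(B_r)\bigr)^{1/m}.
\]
Since $sp-n=m+\beta-n$, this equals $Cr\,D_\nu(x_0,r)$, which is \eqref{eq:measure-uniform-comparison}.

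For \eqref{eq:measure-comparison-tail}, use that $w$ vanishes outside $B_r$ and that $|w|\le CrD$ inside. Then
\[
\Tail_{m,sp}(w;x_0,\rho)^m\le \rho^{sp}(CrD)^m\int_{|y|>\rho}|y|^{-n-sp}\,\diff y\le C(rD)^m,
\]
which gives the stated bound after dividing by $\rho$.

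\emph{The estimate \eqref{eq:energy-mean-to-uniform}.} Write $U-\ell=(v-\ell)+w$. The term $\|w\|_{L^\infty}$ is handled by \eqref{eq:measure-uniform-comparison}.

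For $v-\ell$ we apply \eqref{eq:homogeneous-fixed-affine-supremum} with $a=r/4$, so that $v$ is harmonic in $B_{2a}=B_{r/2}\subset B_r$. The local boundedness hypothesis is supplied by Lemma~\ref{lem:local-boundedness}, or directly by Morrey. This gives $\|v-\ell\|_{L^\infty(B_{r/8})}$ bounded by the $L^m$ average and the tail of $v-\ell$ at radius $r/4$.

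Next return from $v$ to $U$:
\begin{itemize}
\item Replace $v-\ell$ by $(U-\ell)-w$ and use Minkowski.
\item Control the $w$-terms at radius $r/4$ by $CrD$, using the two comparison bounds just proved.
\item Pass the $U-\ell$ terms from radius $r/4$ up to radius $r$ by \eqref{eq:mean-radius-restriction}.
\end{itemize}
The constants are independent of the slope of $\ell$, because \eqref{eq:homogeneous-fixed-affine-supremum} is.

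\emph{Main obstacle.} I expect the main obstacle to be the justification step, not the estimates themselves. One has to make sure that the minimization is well posed when $U$ is only locally of class $W^{s,p}$, and that $w$ is a legitimate test function in both equations, so that the nonlocal pairings converge absolutely. I would handle both through \eqref{eq:local-weak-pairing-continuity} together with the $L^\infty$ bound on $B$ from local Morrey.
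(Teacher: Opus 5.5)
Your comparison estimates and your derivation of \eqref{eq:energy-mean-to-uniform} match the paper's argument: you test with $w=U-v$, combine \eqref{eq:pcoercive} with \eqref{eq:measure-dual-local} and \eqref{eq:fractional-morrey-zero}, apply \eqref{eq:homogeneous-fixed-affine-supremum} to $v$ with $a=r/4$, and return to $U$ by Minkowski. The problem is the construction of $v$, where the step fails as written. Under the hypothesis $U\in W^{s,p}(B_{2r}(x_0))\cap L^m_{sp}(\R^n)$, the localized energy
\[
\iint_{\mathcal Q(B_r)}\frac{|\delta U|^p}{|x-y|^{n+sp}}\,\diff x\diff y
\]
need not be finite. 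The pairs $x\in B_r$, $y\in\R^n\setminus B_{2r}$ contribute essentially $\int_{\R^n\setminus B_{2r}}|U(y)|^p|y-x_0|^{-n-sp}\,\diff y$, which requires a $p$-th power tail. By contrast, $L^m_{sp}$ and \eqref{eq:local-weak-pairing-continuity} only control $|\delta U|^{p-1}$ paired against a $W^{s,p}$ test function. The $L^\infty$ bound on $B$ does not help, because the divergence comes from $U(y)$ with $|y|$ large.

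For instance, take a smooth $U$ with $|U(y)|\approx|y|^\gamma$ at infinity and $s\le\gamma<sp/(p-1)$. It lies in $L^m_{sp}$ and solves the equation in $B_{2r}$ with a bounded right-hand side. Yet every competitor in $U+W^{s,p}_0(B_r)$ agrees with $U$ off $B_r$, so every competitor has infinite localized energy. Your minimization problem then has value $+\infty$ and produces no $v$.

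The fix is the paper's renormalization. Minimize
\[
\mathscr F_U(w)=\frac1p\iint_{\mathcal Q(B)}\frac{|\delta U+\delta w|^p-|\delta U|^p}{|x-y|^{n+sp}}\,\diff x\diff y,\qquad w\in W^{s,p}_0(B),
\]
which is finite because $\bigl||a+b|^p-|a|^p\bigr|\le C(|a|^{p-1}|b|+|b|^p)$ needs only the $(p-1)$-tail of $U$. Coercivity then no longer follows from Poincar\'e alone, since $\mathscr F_U$ contains the linear part $\mathcal B_U(w)$. Instead, the uniform convexity inequality $|a+b|^p-|a|^p-pJ_p(a)b\ge c|b|^p$ gives $\mathscr F_U(w)\ge c[w]^p-C_U[w]$. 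Lower semicontinuity follows from Fatou applied to the nonnegative integrand of $\mathscr F_U-\mathcal B_U$, together with continuity of $\mathcal B_U$ on $W^{s,p}_0(B)$. Alternatively, you could obtain $v$ from monotone operator theory, since $w\mapsto\mathcal B_{U+w}$ is well defined on $W^{s,p}_0(B)$.

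Note also that the lemma asserts uniqueness among all fractional $p$-harmonic functions in $U+W^{s,p}_0(B_r)$, not only among minimizers. Strict convexity alone does not give that. The direct route is the paper's: test the difference of the two homogeneous equations with $v_1-v_2$ and use \eqref{eq:pcoercive}.
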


\begin{proof}
We  denote  $B=B_r(x_0)$ and $V=W_0^{s,p}(B)$.
For $w\in V$, define
\begin{equation}\label{eq:local-comparison-functional}
	\mathscr F_U(w):=
	\frac1p\iint_{\mathcal Q(B)}
	\frac{|\delta U+\delta w|^p-|\delta U|^p}{|x-y|^{n+sp}}
	\,\diff x\diff y.
\end{equation}
The inequality
\[
\bigl||a+b|^p-|a|^p\bigr|
\le C\bigl(|a|^{p-1}|b|+|b|^p\bigr)
\]
and \eqref{eq:local-weak-pairing-continuity} show that
$\mathscr F_U(w)$ is finite. They also give
\[
|\mathscr F_U(w)|\le C_U[w]_{W^{s,p}(\R^n)}
+C[w]_{W^{s,p}(\R^n)}^p,
\]
where $C_U<\infty$ is used only for this construction.	
Since $p>2$, the uniform convexity of $t\mapsto |t|^p$ gives
\begin{equation}\label{eq:p-uniform-convexity}
	|a+b|^p-|a|^p-pJ_p(a)b\ge c_p|b|^p
	\qquad (a,b\in\R).
\end{equation}
If $b=0$, the inequality is immediate. Suppose that $b\ne0$. Then
\[
\begin{aligned}
	|a+b|^p-|a|^p-pJ_p(a)b
	&=
	p(p-1)b^2
	\int_0^1(1-t)|a+tb|^{p-2}\,\diff t  \\
	&=
	p(p-1)|b|^p
	\int_0^1(1-t)
	\left|\frac ab+t\right|^{p-2}\,\diff t
	\ge c_p|b|^p ,
\end{aligned}
\]
where the last constant is positive since
\[
\inf_{q\in\R}
\int_0^1(1-t)|q+t|^{p-2}\,\diff t>0.
\]
Consequently,
	\[
	\begin{aligned}
		\mathscr F_U(w)-\mathcal B_U(w)
		&=
		\iint_{\mathcal Q(B)}
		\frac{
			|\delta U+\delta w|^p-|\delta U|^p
			-pJ_p(\delta U)\delta w
		}
		{p|x-y|^{n+sp}}\,\diff x\diff y\\
		&\ge c[w]_{W^{s,p}(\R^n)}^p,
	\end{aligned}
	\]
	and hence
	\[
	\mathscr F_U(w)
	\ge c[w]_{W^{s,p}(\R^n)}^p
	-C_U[w]_{W^{s,p}(\R^n)}.
	\]
	
	Let $(w_k)\subset V$ be a minimizing sequence.
	It is bounded in $V$. By weak compactness and the compact
	embedding into $L^p(B)$, after passing to a subsequence,
	\[
	w_k\rightharpoonup w_0\quad\text{in }V,
	\qquad
	w_k\to w_0\quad\text{a.e. in }\R^n.
	\]
	The integrand in $\mathscr F_U(w)-\mathcal B_U(w)$
	is nonnegative. Fatou's lemma therefore gives
	\[
	\mathscr F_U(w_0)-\mathcal B_U(w_0)
	\le\liminf_{k\to\infty}
	\bigl(
	\mathscr F_U(w_k)-\mathcal B_U(w_k)
	\bigr).
	\]
	Since $\mathcal B_U$ is a bounded linear functional on $V$,
	$\mathcal B_U(w_k)\to\mathcal B_U(w_0)$. Thus
	\[
	\mathscr F_U(w_0)
	\le\liminf_{k\to\infty}\mathscr F_U(w_k)
	=\inf_{w\in V}\mathscr F_U(w).
	\]
	
	Set $v=U+w_0$. For $\varphi\in V$ and $|t|\le1$,
	\[
	\bigl|
	J_p(\delta U+\delta w_0+t\delta\varphi)
	\delta\varphi
	\bigr|
	\le C\bigl(
	|\delta U|^m+|\delta w_0|^m+|\delta\varphi|^m
	\bigr)|\delta\varphi|.
	\]
	The right-hand side is integrable against
	$|x-y|^{-n-sp}\,\diff x\diff y$ on $\mathcal Q(B)$.
	Differentiating at the minimum gives
	\[
	0=
	\left.
	\frac{\diff}{\diff t}
	\mathscr F_U(w_0+t\varphi)
	\right|_{t=0}
	=\mathcal B_v(\varphi).
	\]
	Also,
	\[
	v\in W^{s,p}(B_{2r}(x_0))\cap L_{sp}^m(\R^n),
	\qquad
	v=U\quad\text{a.e. in }\R^n\setminus B.
	\]
	If $v_1,v_2\in U+V$ satisfy the same homogeneous equation,
	then \eqref{eq:pcoercive} yields
	\[
	c[v_1-v_2]_{W^{s,p}(\R^n)}^p
	\le
	\mathcal B_{v_1}(v_1-v_2)
	-\mathcal B_{v_2}(v_1-v_2)
	=0.
	\]
	Since $v_1-v_2=0$ outside $B$, this proves uniqueness.
	
	Set $w=U-v\in V$.
	By \eqref{eq:local-weak-pairing-continuity} and
	\eqref{eq:measure-dual-local}, the weak identity for $U$
	extends to all test functions in $V$.
	Subtracting the equations and testing with $w$, we obtain
	\[
	\begin{aligned}
		c[w]_{W^{s,p}(\R^n)}^p
		\le
		\iint_{\mathcal Q(B)}
		\frac{
			[J_p(\delta U)-J_p(\delta v)]\delta w
		}
		{|x-y|^{n+sp}}\,\diff x\diff y=\int_B w\,\diff\nu
		\le |\nu|(B)\|w\|_{L^\infty(B)}.
	\end{aligned}
	\]
	If $w\not\equiv0$,  then using \eqref{eq:fractional-morrey-zero}, we find
	\[
	\|w\|_{L^\infty(B)}^m
	\le Cr^{sp-n}|\nu|(B).
	\]
	The same estimate is immediate when $w=0$.
	Since $sp=m+\beta$,
	\[
	\frac1r\|w\|_{L^\infty(B)}
	\le C\left(
	\frac{|\nu|(B)}{r^{n-\beta}}
	\right)^{1/m}
	=C D_\nu(x_0,r),
	\]
	which proves \eqref{eq:measure-uniform-comparison}.
	
	Since $w=0$ outside $B$, for $0<\rho\le r/2$,
	\[
	\begin{aligned}
		\Tail_{m,sp}(w;x_0,\rho)^m
		&=\rho^{sp}
		\int_{B\setminus B_\rho(x_0)}
		\frac{|w(y)|^m}{|y-x_0|^{n+sp}}\,\diff y\\
		&\le C\|w\|_{L^\infty(B)}^m
		\rho^{sp}\int_\rho^r t^{-1-sp}\,\diff t\le C\|w\|_{L^\infty(B)}^m.
	\end{aligned}
	\]
	Together with \eqref{eq:measure-uniform-comparison},
	this proves \eqref{eq:measure-comparison-tail}.
	
	Finally, fix an affine function $\ell$ and take $a=r/4$.
	Since $sp>n$, local Morrey embedding gives
	$v\in L^\infty(B_{r/2}(x_0))$.  Since $v-\ell=U-\ell-w$, 
	applying \eqref{eq:homogeneous-fixed-affine-supremum} to $v$, Minkowski's inequality and the tail bound
	for $w$ yield
	\begin{equation*}\begin{split}
	\|v-\ell\|_{L^\infty(B_{r/8}(x_0))}
&\le C\left[
	\left(
	\fint_{B_a(x_0)}|v-\ell|^m\,\diff x
	\right)^{1/m}
	+\Tail_{m,sp}(v-\ell;x_0,a)
	\right]\\
	&\le C\left[
	\left(
	\fint_B|U-\ell|^m\,\diff x
	\right)^{1/m}
	+\Tail_{m,sp}(U-\ell;x_0,r)
	+\|w\|_{L^\infty(B)}\right].
	\end{split}\end{equation*}
	Now use
	\[
	\|U-\ell\|_{L^\infty(B_{r/8}(x_0))}
	\le
	\|v-\ell\|_{L^\infty(B_{r/8}(x_0))}
	+\|w\|_{L^\infty(B)},
	\]
	apply \eqref{eq:measure-uniform-comparison}, and divide by $r$.
	This proves \eqref{eq:energy-mean-to-uniform}.
\end{proof}

\subsection{Local energy regularity of SOLA}

\begin{lemma}[Local energy regularity of SOLA]
	\label{lem:sola-local-weak}
	Assume $sp>n$, and let $u$ be a SOLA of \eqref{eq:main}.
	Then $u$ has a representative, still denoted by $u$, such that
	\[
	u\in W^{s,p}_{\mathrm{loc}}(\Omega)
	\cap C^{0,\tau}_{\mathrm{loc}}(\Omega),
	\qquad
	\tau=s-\frac np.
	\]
	Moreover, for every ball $B\Subset\Omega$,
	\begin{equation}\label{eq:sola-local-weak-form}
		\mathcal B_u(\varphi)
		=
		\int_B\varphi\,\diff\mu
		\qquad
		\text{for every }\varphi\in W_0^{s,p}(B).
	\end{equation}
\end{lemma}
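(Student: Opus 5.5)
The plan is to obtain uniform local energy bounds for the approximations $u_j$ when $sp>n$, pass to the limit using weak compactness, and then pass to the limit in the weak formulation. The key point is that $sp>n$ turns measures into bounded functionals on $W_0^{s,p}$ (Lemma~\ref{lem:fractional-morrey}). This allows a Caccioppoli-type energy estimate with measure data.

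\textbf{Step 1: uniform local energy bound.} Fix a ball $B=B_r(x_0)$ with $B_{4r}(x_0)\Subset\Omega$. Take a cutoff $\psi\in C_c^\infty(B_{2r}(x_0))$ with $\psi\equiv1$ on $B_r(x_0)$, and test the equation for $u_j$ with $\varphi=\psi^p(u_j-k_j)$, where $k_j=(u_j)_{B_{2r}}$. This is admissible because $u_j\in W^{s,p}(\R^n)$. The standard Caccioppoli computation (as in Di Castro--Kuusi--Palatucci) bounds $[\psi(u_j-k_j)]_{W^{s,p}}^p$ by three quantities: $\|u_j-k_j\|_{L^p(B_{2r})}^p$, a tail term involving $\Tail_{m,sp}(u_j-k_j;x_0,2r)^m\|u_j-k_j\|_{L^1(B_{2r})}$, and the measure term $|\int\varphi\,d\mu_j|$.

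By \eqref{eq:measure-dual-local} the measure term is at most $C|\mu_j|(B_{2r})\,[\psi^p(u_j-k_j)]_{W^{s,p}}$. After absorbing with Young's inequality it contributes $C|\mu_j|(B_{2r})^{p'}$, which is bounded uniformly in $j$ by \eqref{eq:sola-measure-control}.

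The difficulty is the $L^p(B_{2r})$ norm, since $u_j$ is only known to be bounded in $L^q$ with $q<p$. I would close the estimate with a fractional Poincaré--Morrey interpolation. The Morrey embedding gives $\|\psi(u_j-k_j)\|_{L^\infty}\lesssim [\cdot]_{W^{s,p}}+\|\cdot\|_{L^m}$. Then
\[
\|g\|_{L^p}^p\le\|g\|_\infty^{p-m}\|g\|_{L^m}^m ,
\]
and Young's inequality absorbs the $W^{s,p}$ part, leaving constants that depend only on $\|u_j\|_{L^m}$. Those constants are uniform by \eqref{eq:sola-tail-class}. The $L^\infty$ control of $u_j$ on $B_{2r}$ itself also needs this kind of interpolation, so I expect to run a covering/hole-filling iteration over radii between $r$ and $2r$, similar to the proof of \eqref{eq:mean-radius-iteration}. \textbf{This absorption step is the main obstacle.}

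\textbf{Step 2: compactness.} Once $\sup_j[u_j]_{W^{s,p}(B_r)}<\infty$ holds for every such ball, weak compactness and lower semicontinuity, combined with $u_j\to u$ a.e., give $u\in W^{s,p}_{\mathrm{loc}}(\Omega)$. Lemma~\ref{lem:fractional-morrey}, applied to $\psi u_j$, gives uniform $C^{0,\tau}$ bounds on compact sets. Arzelà--Ascoli then yields local uniform convergence to a continuous representative of $u$ in $C^{0,\tau}_{\mathrm{loc}}$.

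\textbf{Step 3: limit in the equation.} Fix $\varphi\in C_c^\infty(B)$ and split $\mathcal B_{u_j}(\varphi)$ into a near part on $B'\times B'$ and a far part, where $B\Subset B'\Subset\Omega$.

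On the near part, $J_p(\delta u_j)|x-y|^{-(n+sp)/p'}$ is bounded in $L^{p'}$ and converges a.e., so it converges weakly. Pairing against $\delta\varphi|x-y|^{-(n+sp)/p}\in L^p$ passes the limit.

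On the far part, $|x-y|\gtrsim1+|y|$. Local uniform convergence together with $u_j\to u$ in $L^m(\R^n)$ gives convergence via $|J_p(a)-J_p(b)|\le C(|a|^{m-1}+|b|^{m-1})|a-b|$ and Hölder's inequality.

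On the right-hand side, $\int\varphi\,d\mu_j\to\int\varphi\,d\mu$ by weak convergence of the measures. This proves \eqref{eq:sola-local-weak-form} for smooth $\varphi$. Since $u\in W^{s,p}(B_{2r})\cap L^m_{sp}$, the identity extends to all $\varphi\in W_0^{s,p}(B)$ by density, using \eqref{eq:local-weak-pairing-continuity} and \eqref{eq:measure-dual-local}. For balls $B$ not well inside $\Omega$, I would enlarge concentrically as the remark after \eqref{eq:local-weak-pairing-continuity} allows, or cover by smaller balls before applying density.
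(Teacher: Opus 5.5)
Your proposal is correct, but Step 1 takes a different route from the paper.

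\textbf{How the paper does it.} The paper first gets a uniform local $L^\infty$ bound for $u_j$ by applying Lemma~\ref{lem:uniform-replacement-comparison} with $\ell=0$, that is, \eqref{eq:energy-mean-to-uniform}. That bound rests on three ingredients:
\begin{itemize}
\item the fractional $p$-harmonic replacement;
\item the slope-independent supremum estimate \eqref{eq:homogeneous-fixed-affine-supremum}, which ultimately depends on the $C^{1,\alpha}$ estimate of Lemma~\ref{lem:fixed-radius-homogeneous-C1a};
\item the Morrey-based $L^\infty$ comparison.
\end{itemize}
Once $\sup_j\|u_j\|_{L^\infty(B_{3\rho})}\le A$ is known, the Caccioppoli estimate with test function $\eta^pu_j$ closes at once. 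The measure term is at most $A|\mu_j|(B_{2\rho})$, and the cutoff and tail terms are explicit in $A$ and $T$. No absorption and no iteration over radii is needed.

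\textbf{How your route differs.} You obtain the $L^\infty$ bound and the energy bound simultaneously. The tools are local Morrey on $B_T$, the interpolation $\|w\|_{L^p}^p\le\|w\|_{L^\infty}\|w\|_{L^m}^m$, and an iteration lemma for $\phi(t)=[u_j]^p_{W^{s,p}(B_t)}$ on $[r,2r]$, of the same type as \eqref{eq:mean-radius-iteration}. This closes for two reasons.
\begin{itemize}
\item Since $p-m=1$, the seminorm enters the cutoff term and the measure term only through $\phi(T)^{1/p}$. Young's inequality therefore gives $\phi(t)\le\tfrac12\phi(T)+A_0(T-t)^{-\alpha}+B_0$.
\item $\phi(2r)<\infty$ for each fixed $j$, because $u_j\in W^{s,p}(\R^n)$.
\end{itemize}
The constants depend only on $\|u_j\|_{L^m}$, the tails, and $\sup_j|\mu_j|(B_{2r})$, so they are uniform in $j$. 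Your route is more elementary and self-contained, since it uses no $C^{1,\alpha}$ theory for fractional $p$-harmonic functions. The paper's route is shorter because the comparison lemma is already available from the preceding subsection.

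\textbf{Step 3 is unnecessary.} Definition~\ref{def:sola} already contains the distributional identity for $u$ with every $\varphi\in C_c^\infty(\Omega)$. The paper therefore only performs the density extension to $W_0^{s,p}(B)$, using \eqref{eq:local-weak-pairing-continuity} and \eqref{eq:fractional-morrey-zero}. Your limit passage is correct but redundant. Likewise, Arzel\`a--Ascoli can be replaced by applying Morrey's embedding directly to $u$ after Fatou.
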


\begin{proof}
	Let $(u_j,\mu_j)$ be the approximation in
	Definition~\ref{def:sola}.
	By \eqref{eq:sola-tail-class} and
	\eqref{eq:sola-measure-control}, for each
	$B_{2r}(x)\Subset\Omega$,
	\[
	\sup_j\left[
	\left(
	\fint_{B_r(x)}|u_j|^m\,\diff y
	\right)^{1/m}
	+\Tail_{m,sp}(u_j;x,r)
	\right]<\infty,
	\qquad
	\sup_j|\mu_j|(B_r(x))<\infty.
	\]
	Since $u_j\in W^{s,p}(\R^n)$,
	Lemma~\ref{lem:uniform-replacement-comparison} applies to $u_j$.
	Taking $\ell=0$ in \eqref{eq:energy-mean-to-uniform} gives
	\begin{equation}\label{eq:sola-local-uniform-bound}
		\begin{aligned}
			\|u_j\|_{L^\infty(B_{r/8}(x))}
			&\le C\left[
			\left(
			\fint_{B_r(x)}|u_j|^m\,\diff y
			\right)^{1/m}
			+\Tail_{m,sp}(u_j;x,r)
			\right]\\
			&\quad+
			Cr^{(sp-n)/m}|\mu_j|(B_r(x))^{1/m}.
		\end{aligned}
	\end{equation}
	A finite covering therefore gives
	\[
	\sup_j\|u_j\|_{L^\infty(K)}<\infty
	\qquad\text{for every }K\Subset\Omega.
	\]
	Fix $B_{3\rho}(z)\Subset\Omega$. By the local uniform bound above
	and \eqref{eq:sola-tail-class}, there exist $A,T<\infty$ such that
	\[
	\sup_j\|u_j\|_{L^\infty(B_{3\rho}(z))}\le A,
	\qquad
	\sup_j\Tail_{m,sp}(u_j;z,3\rho)\le T.
	\]
	Let $\eta\in C_c^\infty(B_{2\rho}(z))$ satisfy
	\[
	0\le\eta\le1,\qquad
	\eta=1\quad\text{on }B_\rho(z),
	\qquad
	|D\eta|\le \frac{C}{\rho}.
	\]
	Then $\eta^p u_j\in W_0^{s,p}(B_{2\rho})$ is an admissible
	test function for the equation of $u_j$.

Note that for $a,b\in\R$ and $\xi,\zeta\in[0,1]$, we have
	\[
	\begin{aligned}
		J_p(a-b)(a\xi^p-b\zeta^p)
		&=
		\frac12|a-b|^p(\xi^p+\zeta^p)
		+\frac12J_p(a-b)(a+b)(\xi^p-\zeta^p)\\
		&\ge
		\frac12|a-b|^p(\xi^p+\zeta^p)
		-C|a-b|^{p-1}(|a|+|b|)
		(\xi^p+\zeta^p)^{\frac{p-1}{p}}
		|\xi-\zeta|\\
		&\ge
		\frac14|a-b|^p(\xi^p+\zeta^p)
		-C(|a|+|b|)^p|\xi-\zeta|^p.
	\end{aligned}
	\]
Testing the equation of $u_j$ with $\eta^p u_j$ gives
	\[
	\begin{aligned}
		\int_{B_{2\rho}}\eta^p u_j\,\diff\mu_j
		&=
		\iint_{B_{3\rho}\times B_{3\rho}}
		\frac{
			J_p(u_j(x)-u_j(y))
			\bigl(\eta(x)^pu_j(x)-\eta(y)^pu_j(y)\bigr)
		}
		{|x-y|^{n+sp}}
		\,\diff x\diff y\\
		&\quad+
		2\int_{B_{2\rho}}
		\int_{\R^n\setminus B_{3\rho}}
		\frac{
			J_p(u_j(x)-u_j(y))
			\eta(x)^pu_j(x)
		}
		{|x-y|^{n+sp}}
		\,\diff y\diff x .
	\end{aligned}
	\]
	For $x,y\in B_{3\rho}$,  we have
	\[
	\begin{aligned}
		&J_p(u_j(x)-u_j(y))
		\bigl(\eta(x)^pu_j(x)-\eta(y)^pu_j(y)\bigr)\\
		&\qquad\ge
		\frac14
		|u_j(x)-u_j(y)|^p
		\bigl(\eta(x)^p+\eta(y)^p\bigr)
		-C\bigl(|u_j(x)|+|u_j(y)|\bigr)^p
		|\eta(x)-\eta(y)|^p .
	\end{aligned}
	\]
	Since $	\eta=1$ on $B_\rho$ and $\|u_j\|_{L^\infty(B_{3\rho})}\le A$,
	we obtain
	\[
	\begin{aligned}
		&\iint_{B_{3\rho}\times B_{3\rho}}
		\frac{
			J_p(u_j(x)-u_j(y))
			\bigl(\eta(x)^pu_j(x)-\eta(y)^pu_j(y)\bigr)
		}
		{|x-y|^{n+sp}}
		\,\diff x\diff y\\
		&\qquad\ge
		c[u_j]_{W^{s,p}(B_\rho)}^p
		-
		CA^p
		\iint_{B_{3\rho}\times B_{3\rho}}
		\frac{
			|\eta(x)-\eta(y)|^p
		}
		{|x-y|^{n+sp}}
		\,\diff x\diff y .
	\end{aligned}
	\]
	Therefore,
	\[
	\begin{aligned}
		c[u_j]_{W^{s,p}(B_\rho)}^p
		&\le
		\int_{B_{2\rho}}\eta^p u_j\,\diff\mu_j+
		CA^p
		\iint_{B_{3\rho}\times B_{3\rho}}
		\frac{
			|\eta(x)-\eta(y)|^p
		}
		{|x-y|^{n+sp}}
		\,\diff x\diff y\\
		&\quad+
		2\int_{B_{2\rho}}
		\int_{\R^n\setminus B_{3\rho}}
		\frac{
			|J_p(u_j(x)-u_j(y))|
			\eta(x)^p|u_j(x)|
		}
		{|x-y|^{n+sp}}
		\,\diff y\diff x .
	\end{aligned}
	\]
	The first term is at most $A|\mu_j|(B_{2\rho})$.
	For the cutoff term, $p-sp>0$ gives
	\[
	\begin{aligned}
		\iint_{B_{3\rho}\times B_{3\rho}}
		\frac{|\eta(x)-\eta(y)|^p}{|x-y|^{n+sp}}
		\,\diff x\diff y\le
		C\rho^{n-p}\int_0^{6\rho}t^{p-sp-1}\,\diff t
		\le C\rho^{n-sp}.
	\end{aligned}
	\]
	For $x\in B_{2\rho}$ and $y\notin B_{3\rho}$,
	$|x-y|\ge |y-z|/3$. Hence
	\[
	\begin{aligned}
		\int_{B_{2\rho}}\int_{\R^n\setminus B_{3\rho}}
		\frac{
			|J_p(u_j(x)-u_j(y))|
			\eta(x)^p|u_j(x)|
		}
		{|x-y|^{n+sp}}\,\diff y\diff x	&\le
		CA\int_{B_{2\rho}}\int_{\R^n\setminus B_{3\rho}}
		\frac{A^m+|u_j(y)|^m}{|y-z|^{n+sp}}
		\,\diff y\diff x\\
		&\le
		CA\rho^{n-sp}
		\left[
		A^m+\Tail_{m,sp}(u_j;z,3\rho)^m
		\right]\\
		&\le
		C\rho^{n-sp}(A^p+AT^m).
	\end{aligned}
	\]
	Combining these estimates proves
	\begin{equation}\label{eq:sola-local-energy-bound}
		[u_j]_{W^{s,p}(B_\rho(z))}^p
		\le C\left[
		A|\mu_j|(B_{2\rho}(z))
		+\rho^{n-sp}(A^p+AT^m)
		\right].
	\end{equation}
	The right-hand side is bounded uniformly in $j$.
	Since $u_j\to u$ almost everywhere, Fatou's lemma gives
	\[
	\int_{B_\rho}|u|^p\,\diff x
	\le A^p|B_\rho|,
	\qquad
	[u]_{W^{s,p}(B_\rho)}^p
	\le
	\liminf_{j\to\infty}
	[u_j]_{W^{s,p}(B_\rho)}^p
	<\infty.
	\]
	As the ball was arbitrary,
	$u\in W^{s,p}_{\mathrm{loc}}(\Omega)$.
	
	Since $sp>n$, local Morrey embedding provides a
	$C^{0,\tau}$ representative on every interior ball,
	where $\tau=s-n/p$.
	On two overlapping balls, these representatives are continuous
	and equal almost everywhere, so they agree everywhere on the
	overlap. They therefore define a unique representative in
	$C^{0,\tau}_{\mathrm{loc}}(\Omega)$.
	
Finally, by density of $C_c^\infty(B)$ in $W_0^{s,p}(B)$,
\eqref{eq:local-weak-pairing-continuity},
\eqref{eq:fractional-morrey-zero}, and the distributional identity,
\[
\mathcal B_u(\varphi)=\int_B\varphi\,\diff\mu
\qquad
\text{for every }\varphi\in W_0^{s,p}(B).
\]
This proves \eqref{eq:sola-local-weak-form}.
\end{proof}

In what follows,  we use the continuous local weak solution supplied by
Lemma~\ref{lem:sola-local-weak}.
The statement is local and does not assert that $u$ belongs to
$W_0^{s,p}(\Omega)$ or identify all SOLA with a global variational
solution. The local comparison lemma now applies directly to $u$.
In particular, it follows from  \eqref{eq:energy-mean-to-uniform}  that for every affine function $\ell$ and every
$B_{2r}(x_0)\Subset\Omega$,
\begin{align}\label{eq:sola-mean-to-uniform}
\begin{split}
\frac1r\|u-\ell\|_{L^\infty(B_{r/8}(x_0))}
&\le\frac Cr\left[
\left(\fint_{B_r(x_0)}|u-\ell|^m\,\diff x\right)^{1/m}
+\Tail_{m,sp}(u-\ell;x_0,r)\right]\\
&\quad+CD_\mu(x_0,2r).
\end{split}
\end{align}

\subsection{Proof of classical differentiability}
\begin{proof}[Proof of Theorem~\ref{thm:wolff}]
Fix $x_0,R$ as in the statement. Use the radii $r_k=(R/2)\theta^k$
and the affine functions $\ell_k$ from
Lemma~\ref{lem:mean-affine-expansion}. With $E_k$ and
$D_k=D_\mu(x_0,2r_k)$ as in that lemma, we have
\[
E_k+D_k\longrightarrow0,
\qquad a_k\longrightarrow a_*,
\qquad \frac{|b_k-b_*|}{r_k}\longrightarrow0.
\]
Since $\ell_k$ minimizes $E_k$, \eqref{eq:sola-mean-to-uniform} gives
\[
\frac1{r_k}\|u-\ell_k\|_{L^\infty(B_{r_k/8}(x_0))}
\le C(E_k+D_k).
\]
For $\ell_*(x)=b_*+a_*\cdot(x-x_0)$, it follows that
\begin{equation}\label{eq:uniform-from-mean-expansion}
\frac1{r_k}\|u-\ell_*\|_{L^\infty(B_{r_k/8}(x_0))}
\le C(E_k+D_k)+\frac{|b_k-b_*|}{r_k}+\frac18|a_k-a_*|
=:q_k\longrightarrow0.
\end{equation}
For the continuous representative, this bound holds at every point.
At $x_0$ it gives $|u(x_0)-b_*|\le r_kq_k$, hence $u(x_0)=b_*$.
If $r_{k+1}/8\le |x-x_0|<r_k/8$, then
\[
\frac{|u(x)-u(x_0)-a_*\cdot(x-x_0)|}{|x-x_0|}
\le \frac{r_kq_k}{r_{k+1}/8}
=8\theta^{-1}q_k\longrightarrow0.
\]
Thus $u$ is Fr\'echet differentiable at $x_0$, with classical gradient
$a_*$. The estimate \eqref{eq:mean-all-scale-bound} gives
\eqref{eq:wolff-gradient}.
At a Lebesgue point of the weak gradient,
Theorem~\ref{thm:mean-gradient} and its proof identify $a_*$ with the
Lebesgue value of $\nabla u$. Thus the classical gradient agrees with
the weak gradient at those points, and hence almost everywhere.
\end{proof}

	\subsection*{Acknowledgments}
	This work was supported by the National Natural Science Foundation of China (No. 12471128).
	
	\subsection*{Conflict of interest}
	The authors declare that there is no conflict of interest.
	
	\subsection*{Data availability}
	Data sharing is not applicable to this article because no datasets were
	generated or analyzed during the current study.

	\subsection*{AI disclosure} During the preparation of this manuscript, the authors used ChatGPT-6 Astra as an auxiliary tool for computations, editorial assistance, and preliminary exploration of proof ideas. All computations, proofs, references, and conclusions were independently verified by the authors, who take full responsibility for the content of this paper.
	

\begin{thebibliography}{10}
		
		\bibitem{Baroni2015}
		Paolo Baroni, \emph{{Riesz potential estimates for a general class of
				quasilinear equations}}, Calc. Var. Partial Differential Equations
		\textbf{53} (2015), no.~3--4, 803--846.
		
		\bibitem{BiswasTopp2025}
		Anup Biswas and Erwin Topp, \emph{{Lipschitz regularity of fractional
				$p$-Laplacian}}, Ann. PDE \textbf{11} (2025), no.~2, 27.
		
		\bibitem{BogeleinDuzaarLiaoMolicaBisciServadei2025}
		Verena B\"ogelein, Frank Duzaar, Naian Liao, Giovanni {Molica Bisci}, and
		Raffaella Servadei, \emph{{Regularity for the fractional $p$-Laplace
				equation}}, J. Funct. Anal. \textbf{289} (2025), no.~9, 111078, 69 pp.
		
		\bibitem{BogeleinDuzaarLiaoMoring2025}
		Verena B\"ogelein, Frank Duzaar, Naian Liao, and Kristian Moring,
		\emph{{Gradient estimates for the fractional $p$-Poisson equation}}, J. Math.
		Pures Appl. (9) \textbf{204} (2025), 103764, 25 pp.
		
		\bibitem{BogeleinDuzaarLiaoMoring2026}
		\bysame, \emph{{Sharp gradient integrability for $(s,p)$-Poisson type
				equations}}, arXiv:2602.08944, 2026.
		
		\bibitem{BrascoLindgren2017}
		Lorenzo Brasco and Erik Lindgren, \emph{{Higher Sobolev regularity for the
				fractional $p$-Laplace equation in the superquadratic case}}, Adv. Math.
		\textbf{304} (2017), 300--354.
		
		\bibitem{BrascoLindgrenSchikorra2018}
		Lorenzo Brasco, Erik Lindgren, and Armin Schikorra, \emph{{Higher H\"older
				regularity for the fractional $p$-Laplacian in the superquadratic case}},
		Adv. Math. \textbf{338} (2018), 782--846.
		
		\bibitem{ByunSongYoun2023}
		Sun-Sig Byun, Kyeong Song, and Yeonghun Youn, \emph{{Potential estimates for
				elliptic measure data problems with irregular obstacles}}, Math. Ann.
		\textbf{387} (2023), no.~1--2, 745--805.
		
		\bibitem{ChlebickaGiannettiZatorskaGoldstein2024}
		Iwona Chlebicka, Flavia Giannetti, and Anna Zatorska-Goldstein, \emph{{Wolff
				potentials and local behavior of solutions to elliptic problems with Orlicz
				growth and measure data}}, Adv. Calc. Var. \textbf{17} (2024), no.~4,
		1293--1321.
		
		\bibitem{ChlebickaKimWeidner2026}
		Iwona Chlebicka, Minhyun Kim, and Marvin Weidner, \emph{{Gradient Riesz
				potential estimates for a general class of measure data quasilinear
				systems}}, Adv. Calc. Var. \textbf{19} (2026), no.~2, 237--269.
		
		\bibitem{DiCastroKuusiPalatucci2014}
		Agnese Di~Castro, Tuomo Kuusi, and Giampiero Palatucci, \emph{{Nonlocal Harnack
				inequalities}}, J. Funct. Anal. \textbf{267} (2014), no.~6, 1807--1836.
		
		\bibitem{DiCastroKuusiPalatucci2016}
		\bysame, \emph{{Local behavior of fractional $p$-minimizers}}, Ann. Inst. H.
		Poincar\'e C Anal. Non Lin\'eaire \textbf{33} (2016), no.~5, 1279--1299.
		
		\bibitem{DiNezzaPalatucciValdinoci2012}
		Eleonora Di~Nezza, Giampiero Palatucci, and Enrico Valdinoci,
		\emph{{Hitchhiker's guide to the fractional Sobolev spaces}}, Bull. Sci.
		Math. \textbf{136} (2012), no.~5, 521--573.
		
		\bibitem{DieningKimLeeNowak2025a}
		Lars Diening, Kyeongbae Kim, Ho-Sik Lee, and Simon Nowak, \emph{{Gradient
				estimates for parabolic nonlinear nonlocal equations}}, Calc. Var. Partial
		Differential Equations \textbf{64} (2025), no.~3, 98, 86 pp.
		
		\bibitem{DieningKimLeeNowak2025b}
		\bysame, \emph{{Higher differentiability for the fractional $p$-Laplacian}},
		Math. Ann. \textbf{391} (2025), no.~4, 5631--5693.
		
		\bibitem{DieningKimLeeNowak2025c}
		\bysame, \emph{{Nonlinear nonlocal potential theory at the gradient level}}, J.
		Eur. Math. Soc. (2025), Published online, doi:10.4171/JEMS/1706.
		
		
		\bibitem{DieningNowak2025}
		Lars Diening and Simon Nowak, \emph{{Calder\'on--Zygmund estimates
				for the fractional $p$-Laplacian}}, Ann. PDE \textbf{11} (2025),
		6, 33 pp.

	
		\bibitem{DongZhu2022}
		Hongjie Dong and Hanye Zhu, \emph{{Gradient estimates for singular parabolic
				$p$-Laplace type equations with measure data}}, Calc. Var. Partial
		Differential Equations \textbf{61} (2022), no.~3, 86, 41 pp.
		
		\bibitem{DongZhu2024}
		\bysame, \emph{{Gradient estimates for singular $p$-Laplace type equations with
				measure data}}, J. Eur. Math. Soc. (JEMS) \textbf{26} (2024), no.~10,
		3939--3985.
		
		\bibitem{DuzaarMingione2010}
		Frank Duzaar and Giuseppe Mingione, \emph{{Gradient estimates via linear and
				nonlinear potentials}}, J. Funct. Anal. \textbf{259} (2010), no.~11,
		2961--2998.
		
		\bibitem{DuzaarMingione2011}
		\bysame, \emph{{Gradient estimates via non-linear potentials}}, Amer. J. Math.
		\textbf{133} (2011), no.~4, 1093--1149.
		
		\bibitem{FernandezRealRosOton2024}
		Xavier Fern\'andez-Real and Xavier Ros-Oton, \emph{{Schauder and
				Cordes--Nirenberg estimates for nonlocal elliptic equations with singular
				kernels}}, Proc. Lond. Math. Soc. (3) \textbf{129} (2024), no.~3, e12629, 47
		pp.
		
		\bibitem{GiovagnoliJesusSilvestre2025}
		Davide Giovagnoli, David Jesus, and Luis Silvestre, \emph{{$C^{1+\alpha}$
				regularity for fractional $p$-harmonic functions}}, arXiv:2509.26565v1, 2025.
		
		\bibitem{HeinonenKilpelainenMartio2006}
		Juha Heinonen, Tero Kilpel\"ainen, and Olli Martio, \emph{{Nonlinear potential
				theory of degenerate elliptic equations}}, Dover Publications, Inc., Mineola,
		NY, 2006, Unabridged republication of the 1993 original.
		
		\bibitem{IannizzottoMosconiSquassina2016}
		Antonio Iannizzotto, Sunra Mosconi, and Marco Squassina, \emph{{Global H\"older
				regularity for the fractional $p$-Laplacian}}, Rev. Mat. Iberoam. \textbf{32}
		(2016), no.~4, 1353--1392.
		
		\bibitem{KilpelainenKuusiTuholaKujanpaa2011}
		Tero Kilpel\"ainen, Tuomo Kuusi, and Anna Tuhola-Kujanp\"a\"a,
		\emph{{Superharmonic functions are locally renormalized solutions}}, Ann.
		Inst. H. Poincar\'e C Anal. Non Lin\'eaire \textbf{28} (2011), no.~6,
		775--795.
		
		\bibitem{KilpelainenMaly1992}
		Tero Kilpel\"ainen and Jan Mal\'y, \emph{{Degenerate elliptic equations with
				measure data and nonlinear potentials}}, Ann. Scuola Norm. Sup. Pisa Cl. Sci.
		(4) \textbf{19} (1992), no.~4, 591--613.
		
		\bibitem{KilpelainenMaly1994}
		\bysame, \emph{{The Wiener test and potential estimates for quasilinear
				elliptic equations}}, Acta Math. \textbf{172} (1994), no.~1, 137--161.
		
		\bibitem{KimLeeLee2023}
		Minhyun Kim, Ki-Ahm Lee, and Se-Chan Lee, \emph{{The Wiener criterion for
				nonlocal Dirichlet problems}}, Comm. Math. Phys. \textbf{400} (2023), no.~3,
		1961--2003.
		
		\bibitem{KimLeeLee2025}
		\bysame, \emph{{Wolff potential estimates and Wiener criterion for nonlocal
				equations with Orlicz growth}}, J. Funct. Anal. \textbf{288} (2025), no.~1,
		110690, 51 pp.
		
		\bibitem{KorteKuusi2010}
		Riikka Korte and Tuomo Kuusi, \emph{{A note on the Wolff potential estimate for
				solutions to elliptic equations involving measures}}, Adv. Calc. Var.
		\textbf{3} (2010), no.~1, 99--113.
		
		\bibitem{KuusiMingione2013}
		Tuomo Kuusi and Giuseppe Mingione, \emph{{Linear potentials in nonlinear
				potential theory}}, Arch. Ration. Mech. Anal. \textbf{207} (2013), no.~1,
		215--246.
		
		\bibitem{KuusiMingione2014}
		\bysame, \emph{{Guide to nonlinear potential estimates}}, Bull. Math. Sci.
		\textbf{4} (2014), no.~1, 1--82.
		
		\bibitem{KuusiMingioneSire2015}
		Tuomo Kuusi, Giuseppe Mingione, and Yannick Sire, \emph{{Nonlocal equations
				with measure data}}, Comm. Math. Phys. \textbf{337} (2015), no.~3,
		1317--1368.
		
		\bibitem{KuusiNowakSire2024}
		Tuomo Kuusi, Simon Nowak, and Yannick Sire, \emph{{Gradient regularity and
				first-order potential estimates for a class of nonlocal equations}},
		arXiv:2212.01950v2, 2024, To appear in American Journal of Mathematics.
		
		\bibitem{LeeLee2021}
		Ki-Ahm Lee and Se-Chan Lee, \emph{{The Wiener criterion for elliptic equations
				with Orlicz growth}}, J. Differential Equations \textbf{292} (2021),
		132--175.
		
		\bibitem{Mingione2011}
		Giuseppe Mingione, \emph{{Gradient potential estimates}}, J. Eur. Math. Soc.
		(JEMS) \textbf{13} (2011), no.~2, 459--486.
		
		\bibitem{NguyenOkSong2026}
		Quoc-Hung Nguyen, Jihoon Ok, and Kyeong Song, \emph{{Wolff potentials and
				nonlocal equations of Lane--Emden type}}, arXiv:2405.11747v2, 2026, To appear
		in Transactions of the American Mathematical Society.
		
		\bibitem{NguyenPhuc2020}
		Quoc-Hung Nguyen and Nguyen~Cong Phuc, \emph{{Pointwise gradient estimates for
				a class of singular quasilinear equations with measure data}}, J. Funct.
		Anal. \textbf{278} (2020), no.~5, 108391, 35 pp.
		
		\bibitem{NguyenPhuc2023}
		\bysame, \emph{{A comparison estimate for singular $p$-Laplace equations and
				its consequences}}, Arch. Ration. Mech. Anal. \textbf{247} (2023), no.~3, 49,
		24 pp.
		
		\bibitem{Nowak2023}
		Simon Nowak, \emph{{Improved Sobolev regularity for linear nonlocal equations
				with VMO coefficients}}, Math. Ann. \textbf{385} (2023), 1323--1378.
		
		\bibitem{Scheven2012}
		Christoph Scheven, \emph{{Gradient potential estimates in non-linear elliptic
				obstacle problems with measure data}}, J. Funct. Anal. \textbf{262} (2012),
		no.~6, 2777--2832.
		
		\bibitem{Schikorra2016}
		Armin Schikorra, \emph{{Nonlinear commutators for the fractional $p$-Laplacian
				and applications}}, Math. Ann. \textbf{366} (2016), no.~1--2, 695--720.
		
		\bibitem{TrudingerWang2002}
		Neil~S. Trudinger and Xu-Jia Wang, \emph{{On the weak continuity of elliptic
				operators and applications to potential theory}}, Amer. J. Math. \textbf{124}
		(2002), no.~2, 369--410.
		
		\bibitem{TrudingerWang2009}
		\bysame, \emph{{Quasilinear elliptic equations with signed measure data}},
		Discrete Contin. Dyn. Syst. \textbf{23} (2009), no.~1--2, 477--494.
		
		\bibitem{XiongZhangMa2026}
		Qi~Xiong, Zhenqiu Zhang, and Lingwei Ma, \emph{{Riesz potential estimates for
				double obstacle problems with Orlicz growth}}, J. Differential Equations
		\textbf{464} (2026), 114192, 43 pp.
		
		\bibitem{XuZhao2026}
		Longjuan Xu and Yirui Zhao, \emph{{Gradient continuity estimates for elliptic
				equations of $p$-Laplace type with measure data}}, Calc. Var. Partial
		Differential Equations \textbf{65} (2026), no.~7, 219, 44 pp.
		
	\end{thebibliography}
%
%

\end{document}